\setlist{leftmargin=9mm}
\numberwithin{equation}{section}
\newcommand{\N}{\mathbb{N}}
\newcommand{\R}{\mathbb{R}}
\newcommand{\pnorm}[2]{\lVert #1\rVert_{#2}}
\newcommand{\bigpnorm}[2]{\big\lVert#1\big\rVert_{#2}}
\newcommand{\biggpnorm}[2]{\bigg\lVert#1\bigg\rVert_{#2}}
\newcommand{\abs}[1]{\lvert#1\rvert}
\newcommand{\bigabs}[1]{\big\lvert#1\big\rvert}
\newcommand{\biggabs}[1]{\bigg\lvert#1\bigg\rvert}
\newcommand{\iprod}[2]{\langle#1,#2\rangle}
\newcommand{\biggiprod}[2]{\bigg\langle#1,#2\bigg\rangle}
\renewcommand{\epsilon}{\varepsilon}
\newcommand{\equald}{\stackrel{d}{=}}
\renewcommand{\hat}{\widehat}
\DeclareMathOperator{\E}{\mathbb{E}}
\DeclareMathOperator{\Prob}{\mathbb{P}}
\DeclareMathOperator{\var}{Var}
\DeclareMathOperator{\op}{op}
\DeclareMathOperator{\gw}{\mathfrak{w}}
\DeclareMathOperator{\nullsp}{null}
\DeclareMathOperator{\proj}{\mathsf{P}}
\DeclareMathOperator{\dis}{dist}
\let\liminf\relax
\DeclareMathOperator*\liminf{\underline{lim}}
\let\limsup\relax
\DeclareMathOperator*\limsup{\overline{lim}}
\DeclareMathOperator*{\argmin}{arg\,min\,}
\newcommand{\beq}{\begin{equation}}
\newcommand{\eeq}{\end{equation}}
\newcommand{\beqa}{\begin{equation} \begin{aligned}}
\newcommand{\eeqa}{\end{aligned} \end{equation}}
\newcommand{\beqas}{\begin{equation*} \begin{aligned}}
\newcommand{\eeqas}{\end{aligned} \end{equation*}}
\newcommand{\bit}{\begin{itemize}}
	\newcommand{\eit}{\end{itemize}}
\newcommand{\bmat}{\begin{bmatrix}}
	\newcommand{\emat}{\end{bmatrix}}
\theoremstyle{definition}\newtheorem{problem}{Problem}[section]
\theoremstyle{definition}
\theoremstyle{remark}
\theoremstyle{remark}
\theoremstyle{definition}
\theoremstyle{plain}\newtheorem{theorem}[problem]{Theorem}
\theoremstyle{plain}\newtheorem{question}{Question}
\theoremstyle{plain}\newtheorem{lemma}[problem]{Lemma}
\theoremstyle{plain}\newtheorem{proposition}[problem]{Proposition}
\theoremstyle{plain}\newtheorem{corollary}[problem]{Corollary}
\theoremstyle{plain}
	\def\MR#1{}
\begin{document}

\title[Gaussian random projections of convex cones]{Gaussian random projections of convex cones: Approximate kinematic formulae and applications}
%
\author[Q. Han]{Qiyang Han}

\address[Q. Han]{
Department of Statistics, Rutgers University, Piscataway, NJ 08854, USA.
}
\email{qh85@stat.rutgers.edu}

\author[H. Ren]{Huachen Ren}

\address[H. Ren]{
	Department of Statistics, Rutgers University, Piscataway, NJ 08854, USA.
}
\email{hr306@scarletmail.rutgers.edu}

\date{\today}
\keywords{approximate kinematic formula, comparison inequality, conic geometry, conic programming, Dvoretzky-Milman theorem, logistic regression, random matrix theory}
\subjclass[2000]{60E15, 60G15}

\begin{abstract}
Understanding the stochastic behavior of random projections of geometric sets constitutes a fundamental problem in high dimension probability that finds wide applications in diverse fields. This paper provides a kinematic description for the behavior of Gaussian random projections of closed convex cones, in analogy to that of randomly rotated cones studied in \cite{amelunxen2014living}. Formally, let $K$ be a closed convex cone in $\R^n$, and $G\in \R^{m\times n}$ be a Gaussian matrix with i.i.d. $\mathcal{N}(0,1)$ entries. We show that $GK\equiv \{G\mu: \mu \in K\}$ behaves like a randomly rotated cone in $\R^m$ with statistical dimension $\min\{\delta(K),m\}$, in the following kinematic sense: 
for any fixed closed convex cone $L$ in $\R^m$,
\begin{align*}
&\delta(L)+\delta(K)\ll m\, \Rightarrow\, L\cap GK = \{0\} \hbox{ with high probability},\nonumber\\
&\delta(L)+\delta(K)\gg m\, \Rightarrow\, L\cap GK \neq \{0\} \hbox{ with high probability}.
\end{align*}
Similar kinematic descriptions are obtained for Gaussian random pre-images, and certain Gaussian random projections of general closed convex sets.

The practical utility and broad applicability of the prescribed approximate kinematic formulae are demonstrated in a number of distinct problems arising from statistical learning, mathematical programming and asymptotic geometric analysis. In particular, we prove (i) new phase transitions of the existence of cone constrained maximum likelihood estimators in logistic regression, (ii) new phase transitions of the cost optimum of deterministic conic programs with random constraints, and (iii) a local version of the Gaussian Dvoretzky-Milman theorem that describes almost deterministic, low-dimensional behaviors of subspace sections of randomly projected convex sets.

The proofs of our results exploit the full strength of comparison inequalities for Gaussian processes. Compared to the conic integral geometry method in \cite{amelunxen2014living}, our method has the advantage of circumventing the rigid requirement of exact kinematic formulae that are typically unavailable for random projections and general closed convex sets.
\end{abstract}

\maketitle


\vspace{-1em}
\setcounter{tocdepth}{1}
\tableofcontents


\sloppy

\section{Main results}\label{section:main_results}

\subsection{The problem and motivating questions}
Let $m,n$ be two positive integers, $K$ be a closed convex cone in $\R^n$. We reserve the notation $G \in \R^{m\times n}$ for a standard Gaussian matrix in that
\begin{align}\label{def:standard_gaussian_matrix}
G \in \R^{m\times n} \hbox{ contains i.i.d. $\mathcal{N}(0,1)$ entries}. 
\end{align}
We will be interested in the stochastic behavior of the \emph{Gaussian random projection} of $K\subset \R^n$, defined as
\begin{align}\label{def:GT}
GK\equiv \{G \mu: \mu \in K\}\subset \R^m. 
\end{align}
The study of the stochastic behavior of Gaussian random projections has a long history in the high dimensional probability literature; see e.g.,  \cite{pisier1989volume,ledoux2013probability,boucheron2013concentration,artstein2015asymptotic,vershynin2018high} for textbook treatments on this topic. 

The lasting interest in the theory and ideas of (Gaussian) random projections is partly due to their wide applications in problems arising from signal processing, statistics, and computational mathematics. Interested readers are referred to, e.g., \cite{vempala2001random,achlioptas2003database,candes2006near,dasgupta2008random,baraniuk2009random,lopes2011more,dahl2013large,durrant2015random,cannings2017random} for a highly non-exhaustive but diverse list of related applications. 

To provide some concrete applied and statistical context for studying the particular form of (\ref{def:GT}), we consider below two examples arising from convex-constrained statistical estimation problems.

\subsubsection{Cone-constrained minimum-norm interpolators in linear regression}

Consider the linear regression setting: suppose we observe i.i.d.\ data $\{(X_i,Y_i): i\in [m]\}\subset \R^n\times \R$ from the model 
\begin{align}\label{def:linear_model}
Y_i = \iprod{X_i}{\mu_0}+\xi_i,
\end{align}
where $\mu_0 \in \R^n$ is an unknown signal vector of interest, and $\xi_i$’s are unobservable statistical errors. Here we are interested in estimating or recovering a structured signal $\mu_0 \in K$ modeled a closed convex cone $K \subset \R^n$, via the popular minimum-norm interpolator defined as 
\begin{align}\label{def:cone_interpolator}
\hat{\mu}_K \in \argmin_{\mu \in K: Y_i=\iprod{X_i}{\mu},\forall i \in [m]} \pnorm{\mu}{}.
\end{align}
In the special case $K = \R^n$, the unconstrained minimum-norm interpolator $\hat{\mu}_{\R^n}$ is known to be the convergence point of natural gradient descent in overparameterized linear regression. It therefore exhibits a strong connection to the theory of learning in overparameterized models, which has received significant recent attention in the statistics and machine learning literature and has been studied in increasing depth in \cite{bartlett2020benign,wu2020optimal,bartlett2021deep,hastie2022surprises,montanari2022interpolation,montanari2023generalization,han2023distribution}. 

One important reason why the unconstrained minimum-norm interpolator $\hat{\mu}_{\R^n}$ serves as a convenient theoretical model for understanding learning in overparameterized regimes is that its existence undergoes a sharp phase transition: $\hat{\mu}_{\R^n}$ is well defined if and only if the sample size $m$ does not exceed the ambient dimension $n$ (under very general conditions on the design matrix). 

For a general cone constraint $K$, it is reasonable to speculate that the minimum-norm interpolator $\hat{\mu}_K$ coincides with the convergence point of natural projected gradient descent, provided that the sample size does not exceed a certain threshold. This raises the natural question:

\begin{question}\label{question:linear_reg}
	For a general closed convex cone $K\subset \R^n$, what is the phase-transition point of the sample size $m$ for the existence of the cone-constrained minimum-norm interpolator $\hat{\mu}_K$?
\end{question}

\subsubsection{Cone-constrained maximum likelihood estimator in logistic regression}

The phase-transition phenomenon for statistical estimators is not confined to the minimum-norm interpolator (\ref{def:cone_interpolator}) in linear regression. Another well-known example is the maximum likelihood estimator in logistic regression. Concretely, suppose $m$ i.i.d.\ samples $(X_i,Y_i) \in \R^n\times \{\pm 1\}, i \in [m]$, are observed with 
\begin{align}\label{eqn:glm}
\Prob(Y_i=1|X_i)=1-\Prob(Y_i=-1|X_i)= 1/\big(1+e^{-X_i^\top \beta_0}\big)
\end{align}
for some unknown regression vector $\beta_0 \in \R^n$. The statistical problem is to estimate $\beta_0$ based on the observations $\{(X_i,Y_i): i \in [m]\}$. In this logistic regression problem, the log-likelihood is given by 
\begin{align}\label{eqn:logistic_likelihood}
\ell(\beta) = -\sum_{i=1}^m \log \Big[1+\exp\big(-Y_i \cdot X_i^\top \beta\big)\Big]. 
\end{align}
The maximum likelihood estimator (MLE) $\hat{\beta}_{\R^n}$ is then defined as any maximizer of the map $\beta \mapsto  \ell(\beta)$ whenever such a maximizer exists. 

It is well known that the existence of the unconstrained MLE $\hat{\beta}_{\R^n}$ exhibits a sharp phase transition. For instance, under the global null $\beta_0=0$ and in the proportional high-dimensional regime $n/m\to \kappa \in (0,\infty)$, the seminal result of \cite{cover1965geometrical} shows that the unconstrained MLE $\hat{\beta}_{\R^n}$ exists if and only if $\kappa<1/2$.

Here we are interested in a convex-constrained generalization of the MLE defined above. Specifically, for a given closed convex cone $K \subset \R^n$, the \emph{cone-constrained maximum likelihood estimator (MLE)} $\hat{\beta}_K$ is defined as any maximizer of the map $\beta \mapsto \ell(\beta)$ over $\beta \in K$, whenever such a maximizer exists. Such constraints arise naturally in applications; readers are referred to \cite{han2023noisy} for related literature and motivating examples.

An immediate question arises regarding the existence of $\hat{\beta}_K$:
\begin{question}\label{question:logistic_reg}
	For a general closed convex cone $K\subset \R^n$, what is the phase-transition point of the sample size $m$ for the existence of the cone-constrained maximum likelihood estimator $\hat{\beta}_K$ in logistic regression?
\end{question}

As will be clear below, at least when the covariates $X_i$ follow a standard Gaussian design, both Questions \ref{question:linear_reg} and \ref{question:logistic_reg} boil down to understanding the phase-transitional behavior of the random projection $GK$ defined in (\ref{def:GT}) hitting certain structured sets (or its generalized version).

Of course, the utility of a precise understanding of (\ref{def:GT}) extends far beyond the two statistical examples mentioned above. As will also become clear below, we will leverage our theoretical results for $GK$ to develop new findings in both optimization and asymptotic geometric analysis.

\subsection{Some heuristics for the behavior of random projections}
In typical applications related to our setting (\ref{def:GT}), the ambient dimension $n$ in which the convex cone $K$ lives is (much) larger than the projection dimension $m$ in which the random convex cone $GK$ resides. The central question of interest is therefore to understand how the stochastic geometry of the relatively low-dimensional random cone $GK$ can be described by that of the deterministic, but possibly high-dimensional cone $K$. 

One particular easy case to understand the behavior of $GK$ is when $K$ is a linear subspace of $\R^n$. In this case, $GK$ is simply distributed as a uniform random subspace of $\R^m$ with dimension $\min\{\dim(K),m\}$. Clearly, both the notion of dimension and the prescribed exact distribution description are specific to $K$ being subspaces. The former issue is less severe: there is a natural generalization of the notion of dimension for linear subspaces to the so-called \emph{statistical dimension} $\delta(K)$ for general convex cones $K$, formally defined as
\begin{align}\label{def:stat_dim}
\delta(K)\equiv \E \pnorm{\Pi_K(g)}{}^2,\quad g\sim \mathcal{N}(0,I_n).
\end{align}
Here $\pnorm{\cdot}{}$ denotes the Euclidean norm in $\R^n$, and $\Pi_K$ denotes the metric projection onto $K$ defined as $\Pi_K(\cdot)=\argmin_{\mu \in K}\pnorm{\cdot-\mu}{}$.\footnote{The readers are referred to Section \ref{section:preliminary} below and \cite[Section 3]{amelunxen2014living} for more technical discussions on the notion of statistical dimension and other related results. }

The major difficulty, however, appears to be the lack of natural generalizations of the exact distributional descriptions of $GK$ from $K$ being subspaces to general convex cones. Fortunately, the classical theory of Gaussian random projections gives useful hints on the stochastic behavior of $GK$. For instance, when $\delta(K)\gg m$, the Gaussian Dvoretzky-Milman Theorem (cf. \cite{dvoretzky1959theorem,milman1971new,pisier1989volume,artstein2015asymptotic}) and the cone property of $K$ indicate that $GK$ is trivially the full space $\R^m$ with high probability. On the other hand, when $m\gg \delta(K)$, the Johnson-Lindenstrauss Embedding Theorem (cf. \cite{johnson1984extensions,klartag2005empirical}) suggests that $GK\subset \R^m$ is locally almost isomorphic to $K\subset \R^n$. Combined with the rotation invariance of Gaussian random projections, it is then natural to conjecture that
\begin{align}\label{eqn:GK_heuristic}
&\hbox{$GK$ behaves like a randomly rotated cone in $\R^m$}\nonumber\\
&\qquad \hbox{with statistical dimension $\min\{\delta(K),m\}$}.
\end{align}
\subsection{Approximate kinematic formulae: main results}

\subsubsection{Approximate kinematic formulae for random projections}
The first goal of this paper is to formalize the heuristic (\ref{eqn:GK_heuristic}) for an arbitrary closed convex cone $K$, using the kinematic formulation in \cite{amelunxen2014living} that describes the behavior of randomly rotated cones. In particular, let $O_n$ be distributed according to the Haar measure on the orthogonal group $\mathrm{O}(n)$. \cite{amelunxen2014living} describes the kinematic behavior of $O_n K$ as follows: there exists some universal constant $C>0$ such that for any fixed closed convex cone $L\subset \R^n$ and $t\geq 1$, 
\begin{align}\label{eqn:ALMT}
\sqrt{\delta(K)}-\sqrt{n-\delta(L)}\leq -C\sqrt{t}\, &\Rightarrow \, \Prob\big(L\cap O_n K = \{0\}\big)\geq 1-e^{-t},\nonumber\\
\sqrt{\delta(K)}-\sqrt{n-\delta(L)}\geq C\sqrt{t}\, &\Rightarrow \, \Prob\big(L\cap O_n K \neq \{0\}\big)\geq 1-e^{-t}.
\end{align}
The above results (\ref{eqn:ALMT}) are called \emph{approximate kinematic formulae} in \cite{amelunxen2014living}, as they serve as approximations for the \emph{exact kinematic formulae} for $\Prob\big(L\cap O_n K \neq \{0\}\big)$, the form of which can be found in (\ref{eqn:exact_kinematic_formula}) in Section \ref{subsection:ALMT_proof_technique} below.

Using the kinematic formulation (\ref{eqn:ALMT}),  we may now give a formal description of (\ref{eqn:GK_heuristic}) as follows.

\begin{theorem}\label{thm:approx_kinematics}
Suppose that $K \subset \R^n$ and $L\subset \R^m$ are non-trivial closed convex cones. There exists some universal constant $C>0$ such that the following statements hold for $t\geq 1$.
\begin{enumerate}
	\item If $\sqrt{\delta(K)}-\sqrt{m-\delta(L)}\leq -C\sqrt{t}$, then 
	\begin{align*}
	\Prob\big(L\cap GK = \{0\}\big)\geq 1-e^{-t}.
	\end{align*}
	\item If $\sqrt{\delta(K)}-\sqrt{m-\delta(L)}\geq C\sqrt{t}$, then 
	\begin{align*}
	\Prob\big(L\cap GK \neq \{0\}\big)\geq 1-e^{-t}.
	\end{align*}
\end{enumerate}
\end{theorem}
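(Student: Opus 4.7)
The overall plan is to reformulate both events in Theorem~\ref{thm:approx_kinematics} as the sign of a single Gaussian min--max process and to control that sign via Gordon-type Gaussian comparison inequalities. Since $G\mu\in L$ is equivalent to $\sup_{\nu\in L^\circ\cap S^{m-1}}\nu^\top G\mu\le 0$, setting
\begin{equation*}
\Phi \;:=\; \inf_{\mu\in K\cap S^{n-1}}\,\sup_{\nu\in L^\circ\cap S^{m-1}}\, \nu^\top G\mu
\end{equation*}
yields the equivalences $\{L\cap GK=\{0\}\}=\{\Phi>0\}$ and $\{L\cap GK\neq\{0\}\}=\{\Phi\le 0\}$, modulo the easily-controlled null event $\ker G\cap K\neq\{0\}$ (which itself is governed by \eqref{eqn:ALMT} applied to $K$ and the random subspace $\ker G$). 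Part~(1) is then a high-probability lower bound on $\Phi$, and Part~(2) a matching upper bound.

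For Part~(1), I would compare the primary bilinear Gaussian process $X_{\mu,\nu}=\nu^\top G\mu$ against the decoupled auxiliary $Y_{\mu,\nu}=\iprod{g}{\nu}+\iprod{h}{\mu}$, where $g\sim\mathcal{N}(0,I_m)$ and $h\sim\mathcal{N}(0,I_n)$ are independent. A direct second-moment computation on the unit spheres $\|\mu\|=\|\nu\|=1$ shows that the within-row increments coincide, $\E(X_{\mu,\nu}-X_{\mu,\nu'})^2=\|\nu-\nu'\|^2=\E(Y_{\mu,\nu}-Y_{\mu,\nu'})^2$, while the between-row increments of $X$ are smaller:
\begin{equation*}
\E(X_{\mu,\nu}-X_{\mu',\nu'})^2-\E(Y_{\mu,\nu}-Y_{\mu',\nu'})^2=-2\bigl(1-\iprod{\mu}{\mu'}\bigr)\bigl(1-\iprod{\nu}{\nu'}\bigr)\le 0.
\end{equation*}
Gordon's Gaussian min--max comparison then gives the stochastic domination $\Prob(\Phi\le c)\le\Prob(\phi\le c)$ for every $c\in\R$, where
\begin{equation*}
\phi\;:=\;\sup_{\nu\in L^\circ\cap S^{m-1}}\!\iprod{g}{\nu}\;+\;\inf_{\mu\in K\cap S^{n-1}}\!\iprod{h}{\mu}.
\end{equation*}
The two summands of $\phi$ are independent $1$-Lipschitz functions of Gaussians whose expectations are of order $\sqrt{m-\delta(L)}$ and $-\sqrt{\delta(K)}$ respectively (via $\delta(L^\circ)=m-\delta(L)$ and the conic-width identity $\E\sup_{v\in C\cap S^{k-1}}\iprod{g}{v}\asymp\sqrt{\delta(C)}$). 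Standard Gaussian concentration then turns the Part~(1) hypothesis $\sqrt{\delta(K)}-\sqrt{m-\delta(L)}\le -C\sqrt{t}$ into $\Prob(\phi\le 0)\le e^{-t}$, and Gordon gives Part~(1).

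The main obstacle is Part~(2): the Gordon inequality above only controls $\Prob(\Phi\le c)$ from above, whereas Part~(2) requires the \emph{opposite} direction $\Prob(\Phi>0)\le e^{-t}$, i.e., a matching upper bound on $\Phi$. A Convex Gaussian Min--Max Theorem-type argument would supply such a matching bound if the index sets were convex, but the spherical slices $K\cap S^{n-1}$ and $L^\circ\cap S^{m-1}$ are non-convex; this is precisely the gap that the paper's promised two-sided Gordon escape theorem for \emph{spherically} convex sets is designed to fill. My plan is to establish, by exploiting the closed-convex-cone structure of both $K$ and $L^\circ$, a matching upper bound of the form $\Prob(\Phi\ge c)\le 2\Prob(\phi\ge c)$ that complements the one-sided bound used in Part~(1). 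Once that is available, the same auxiliary $\phi$, now used in the reverse direction, combined with the Part~(2) hypothesis $\sqrt{\delta(K)}-\sqrt{m-\delta(L)}\ge C\sqrt{t}$, gives $\Prob(\phi\ge 0)\le e^{-t}$ by Gaussian concentration, and hence $\Prob(\Phi>0)\le 2e^{-t}$, yielding Part~(2). Establishing this two-sided upgrade --- compensating for the non-convex sphere constraint by leveraging the conic structure of $K$ and $L^\circ$ --- is the principal technical difficulty, and plays in this probabilistic argument the role that the exact conic kinematic formula plays in the ALMT approach \eqref{eqn:ALMT}, which is no longer available for random projections.
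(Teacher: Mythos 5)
Your Part (1) is sound and is essentially the classical escape-through-a-mesh argument: the increment computation you give is correct, the one-sided Gordon comparison does yield $\Prob(\Phi\le c)\lesssim\Prob(\phi\le c)$, and concentration of the two independent terms of $\phi$ finishes that direction. This matches in spirit the paper's Proposition \ref{prop:support_fcn_proj_upper}, which reaches the same conclusion through a Lagrangian representation of the support function of $L\cap GK_n$ and Theorem \ref{thm:CGMT}-(1).

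The genuine gap is Part (2), and it is exactly the step you defer. You correctly diagnose that the reverse comparison $\Prob(\Phi\ge c)\le 2\Prob(\phi\ge c)$ cannot come from the convex Gaussian min--max theorem because $K\cap\partial B_n$ and $L^\circ\cap\partial B_m$ are not convex, and you then announce a ``plan'' to establish it by ``exploiting the conic structure'' --- but no mechanism is supplied, and this is precisely the content of the theorem's hard direction (the part that previously required exact kinematic formulae). The paper does not prove your conjectured inequality; it changes the functional instead: it works with $\mathsf{h}_{L\cap GK_n}(x)=\sup_{\mu\in K_n}\inf_{v\in L^\circ}\iprod{x-v}{G\mu}$, whose index sets $K_n=K\cap B_n$ and $L^\circ$ \emph{are} convex, adds the regularizer $\tfrac{\epsilon}{2}\pnorm{v}{}^2$ to compactify the inner infimum, applies Theorem \ref{thm:CGMT}-(2), and then solves the auxiliary scalar minimization explicitly (Lemma \ref{lem:fcn_P}) to obtain the lower bound $\tfrac12\{(\sup_{\mu\in K\cap B_n}\iprod{g}{\mu})^2-(\sup_{v\in L^\circ\cap B_m}\iprod{h}{v})^2\}_+^{1/2}-\abs{\iprod{h}{x}}$. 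Note this auxiliary is not your $\phi$ and carries a constant-factor loss that a clean two-sided comparison with a single $\phi$ would not tolerate; there is no evidence your stated inequality is provable as written. A secondary, fixable error: in the Part (2) regime the event $\ker G\cap K\neq\{0\}$ is \emph{not} negligible (e.g.\ $\delta(K)>m$ forces it with probability one when $n>m$), so it cannot be discarded via (\ref{eqn:ALMT}) as you suggest; the correct fix is to note that $\Phi<0$ already implies $L\cap GK\neq\{0\}$ (strict negativity of $\sup_{\nu\in L^\circ\cap\partial B_m}\iprod{\nu}{G\mu}$ rules out $G\mu=0$), which means your missing comparison must control the event $\{\Phi\ge 0\}$ rather than $\{\Phi>0\}$.
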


 Remarkably, Theorem \ref{thm:approx_kinematics} provides a complete analogue to the approximate kinematic formulae (\ref{eqn:ALMT}) in its full generality, despite that \emph{exact} formulae for the probability $\Prob(L\cap GK\neq \{0\})$  are apparently unavailable (more technical comments on this can be found in Section \ref{subsection:ALMT_proof_technique} below). Due to the apparent similar appearance to (\ref{eqn:ALMT}), we will call the results in Theorem \ref{thm:approx_kinematics} \emph{approximate kinematic formulae} for $\Prob(L\cap GK\neq \{0\})$.

From a different angle, the main theme between (\ref{eqn:ALMT}) and Theorem \ref{thm:approx_kinematics} studies the proximity of random orthogonal matrices to Gaussian random matrices in the sense that $\Prob(L\cap O_nK\neq \{0\})\approx \Prob(L\cap GK\neq \{0\})$, at least when the closed convex cones $K,L$ are of the same dimensionality. A separate line in the random matrix theory studies similar proximity phenomena for random orthogonal and Gaussian matrices in various other senses; the interested readers are referred to \cite{jiang2006how,chatterjee2007multivariate,tropp2012comparison,thrampoulidis2015isotropically,jiang2019distances} and references therein for further details in this direction.

\subsubsection{Connections to Gordon's Escape Theorem}

When $L$ is taken to be a subspace, our Theorem \ref{thm:approx_kinematics} immediately implies an optimal, two-sided version of the so-called Gordon's Escape Theorem for spherically convex sets. We formally record this result below.

\begin{corollary}\label{cor:gordon_escape}
	Let $1\leq \ell\leq n$ be an integer and $L\subset \R^n$ be any fixed subspace with $\dim(L)=\ell$. Let $K\subset \R^n$ be a non-trivial closed convex cone. There exists some universal constant $C>0$ such that the following statements hold for $t\geq 1$.
	\begin{enumerate}
		\item If $\sqrt{\ell}-\sqrt{n-\delta(K)}\leq -C\sqrt{t}$, then
		\begin{align*}
		\Prob\big(K\cap O_n L= \{0\}\big)\geq 1-e^{-t}.
		\end{align*}
		\item If $\sqrt{\ell}-\sqrt{n-\delta(K)}\geq C\sqrt{t}$, then 
		\begin{align*}
		\Prob\big(K\cap O_nL\neq \{0\}\big)\geq 1-e^{-t}.
		\end{align*}
	\end{enumerate}
	Here recall $O_n$ is uniformly distributed on $\mathrm{O}(n)$. 
\end{corollary}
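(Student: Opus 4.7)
The plan is to derive Corollary~\ref{cor:gordon_escape} as a direct specialization of Theorem~\ref{thm:approx_kinematics}, after identifying the Gaussian projection $GL$ of a subspace $L$ with a Haar-rotated subspace $O_n L$ in distribution.

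First, I would take $m=n$ in Theorem~\ref{thm:approx_kinematics} and invoke it with its first cone played by the corollary's subspace $L$ (for which $\delta(L)=\ell$, since the statistical dimension of an $\ell$-dimensional subspace equals $\ell$) and its second cone played by the corollary's cone $K$. Under this identification, the theorem's size condition $\sqrt{\delta(K_{\mathrm{thm}})}-\sqrt{m-\delta(L_{\mathrm{thm}})}\leq -C\sqrt{t}$ reads $\sqrt{\ell}-\sqrt{n-\delta(K)}\leq -C\sqrt{t}$, matching the hypothesis of part~(1); the opposite inequality matches part~(2).

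The key geometric step is the distributional identity $GL \equald O_n L$ as random subsets of $\R^n$. Pick any $U \in \R^{n\times \ell}$ whose columns form an orthonormal basis of $L$, so that $GL=\mathrm{col}(GU)$. A direct covariance calculation using $U^\top U = I_\ell$ shows that $GU$ has iid $\mathcal{N}(0,1)$ entries, so its column span is an $\ell$-dimensional random subspace whose law is invariant under the left action of $\mathrm{O}(n)$, hence uniform on the Grassmannian $\mathrm{Gr}(n,\ell)$; this is also the law of $O_n L$.

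Combining these two ingredients gives $\Prob(K \cap O_n L = \{0\}) = \Prob(K \cap GL = \{0\})$ and similarly for ``$\neq$'', and both parts of the corollary then follow at once from Theorem~\ref{thm:approx_kinematics}. I do not foresee a real obstacle: the corollary is essentially the restriction of Theorem~\ref{thm:approx_kinematics} to the case where one of the two cones is a subspace, and the only minor point requiring attention is the bookkeeping role-swap of the symbols $K$ and $L$ between the theorem and the corollary statements.
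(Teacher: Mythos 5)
Your proposal is correct and follows essentially the same route as the paper: the paper likewise realizes $O_nL\equald G_nL$ (via uniqueness of the $\mathrm{O}(n)$-invariant measure on the Grassmannian, which is what your covariance computation plus invariance argument establishes) and then applies Theorem \ref{thm:approx_kinematics} with $m=n$ and the roles of $K$ and $L$ swapped exactly as you describe.
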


As will be clear below, the upper estimate part (1) (with more general $K$'s) is proved essentially by \cite{gordon1988milman} via a one-sided comparison principle for Gaussian processes.  The more difficult lower estimate part (2) is proved much later by \cite{amelunxen2014living} via the so-called Crofton formula (\ref{eqn:crofton_formula}) and the concentration of intrinsic volumes. As Corollary \ref{cor:gordon_escape} now follows as an immediate consequence of Theorem \ref{thm:approx_kinematics}, an interesting proof-theoretic implication of our approach is that it provides a purely random process proof for the classical Gordon's Escape Theorem, without resorting to conic integral geometry method as employed in \cite{amelunxen2014living}.

\subsubsection{Approximate kinematic formulae for random pre-images}

Next, we describe an approximate kinematic formula for the \emph{Gaussian random pre-image} $G^{-1}L$ for a fixed closed convex cone $L\subset \R^m$, formally defined by
\begin{align}\label{def:preimage}
G^{-1}L\equiv \{\mu \in \R^n: G\mu \in L\}. 
\end{align}
The approximate kinematic formula below describes the behavior of $G^{-1}L$. 

\begin{theorem}\label{thm:approx_kinematics_preimage}
	Suppose that $K \subset \R^n$ and $L\subset \R^m$ are non-trivial closed convex cones. There exists some universal constant $C>0$ such that the following statements hold for $t\geq 1$.
	\begin{enumerate}
		\item If $\sqrt{\delta(K)}-\sqrt{m-\delta(L)}\leq -C\sqrt{t}$, then 
		\begin{align*}
		\Prob\big(K\cap G^{-1}L = \{0\}\big)\geq 1-e^{-t}.
		\end{align*}
		\item If $\sqrt{\delta(K)}-\sqrt{m-\delta(L)}\geq C\sqrt{t}$, then 
		\begin{align*}
		\Prob\big(K\cap G^{-1}L \neq \{0\}\big)\geq 1-e^{-t}.
		\end{align*}
	\end{enumerate}
\end{theorem}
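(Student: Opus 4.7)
The plan is to follow the same template as for Theorem \ref{thm:approx_kinematics}: reformulate the target event as a sign statement for a Gaussian min-max process, and then use a Gaussian process comparison to transfer the analysis to a computable auxiliary process in the independent pair $(g,h)$.

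Part (2) requires essentially no new work beyond the main theorem. Any nonzero $y\in L\cap GK$ is of the form $y=G\mu$ with $\mu\in K$, necessarily $\mu\neq 0$, and $G\mu=y\in L$; hence $\mu\in (K\cap G^{-1}L)\setminus\{0\}$. The implication $L\cap GK\neq\{0\}\Rightarrow K\cap G^{-1}L\neq\{0\}$ thus lets Theorem \ref{thm:approx_kinematics}(2) discharge Part (2) directly with the stated probability bound.

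The real content lies in Part (1). The key reformulation is
\begin{align*}
K\cap G^{-1}L=\{0\}\quad\Longleftrightarrow\quad \phi(G)\equiv \min_{\mu\in K\cap\partial B_n}\dist{G\mu}{L}>0,
\end{align*}
which follows from the cone property of $K\cap G^{-1}L$ and compactness of $K\cap\partial B_n$, together with continuity of $\mu\mapsto\dist{G\mu}{L}$. Writing the distance through Moreau's decomposition, $\dist{y}{L}=\max_{v\in L^\circ\cap B_m}\iprod{v}{y}$, yields a min-max representation of $\phi(G)$ over the compact sets $K\cap\partial B_n$ and $L^\circ\cap B_m$. I would then apply Gordon's one-sided min-max comparison --- the same tool underlying the proof of Gordon's Escape Theorem (Corollary \ref{cor:gordon_escape}), which crucially imposes \emph{no} convexity requirement on the $\mu$-index set --- to lower bound $\phi(G)$ stochastically by the Gordon auxiliary
\begin{align*}
\phi_{\mathrm{aux}}(g,h)=\min_{\mu\in K\cap\partial B_n}\max_{v\in L^\circ\cap B_m}\bigl[\iprod{g}{v}+\|v\|\iprod{h}{\mu}\bigr],\quad g\sim\mathcal{N}(0,I_m),\ h\sim\mathcal{N}(0,I_n).
\end{align*}
A short bookkeeping --- factor $r=\|v\|\in[0,1]$ out of the inner max, optimize over the unit direction in $L^\circ\cap\partial B_m$, then over $r\in[0,1]$, then push the resulting positive part through the outer minimum --- simplifies this to
\begin{align*}
\phi_{\mathrm{aux}}(g,h)=\bigl[\|\Pi_{L^\circ}(g)\|-\|\Pi_K(-h)\|\bigr]_+\equald\bigl[\|\Pi_{L^\circ}(g)\|-\|\Pi_K(h)\|\bigr]_+,
\end{align*}
using the sign-symmetry $-h\equald h$ for the distributional identity. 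The quantitative conclusion is then routine: Borell's inequality applied to the $1$-Lipschitz maps $g\mapsto\|\Pi_{L^\circ}(g)\|$ and $h\mapsto\|\Pi_K(h)\|$, combined with the identities $\E\|\Pi_{L^\circ}(g)\|^2=m-\delta(L)$ and $\E\|\Pi_K(h)\|^2=\delta(K)$, shows that under $\sqrt{\delta(K)}+C\sqrt{t}\leq\sqrt{m-\delta(L)}$ with $C$ sufficiently large, $\phi_{\mathrm{aux}}(g,h)\gtrsim\sqrt{t}$ on an event of probability at least $1-e^{-t}$; transferring this back through the Gordon comparison gives $\phi(G)>0$ with the same probability, which is Part (1).

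The principal technical obstacle is that the natural $\mu$-index set $K\cap\partial B_n$ is \emph{non-convex}, so the two-sided convex Gaussian min-max theorem cannot be invoked for $\phi(G)$ directly. The remedy is structural: Part (1) only requires a one-sided lower bound on $\phi(G)$, and Gordon's classical min-max comparison delivers exactly that without any convexity requirement on the index sets, exactly mirroring the situation in Corollary \ref{cor:gordon_escape}. (An alternative route is to apply the support-function estimate (\ref{ineq:conic_program_reduction}) pointwise in $x\in\partial B_n$ and close by a Lipschitz $\epsilon$-net argument, exploiting that in Case (1) the auxiliary feasibility set $\{\mu\in K\cap B_n:\iprod{g}{\mu}\geq\|\Pi_{L^\circ}(h)\|\}$ is empty uniformly in $x$ with high probability; the Gordon route is notationally cleaner.) A minor secondary subtlety is the degenerate case $L=\R^m$ (so $L^\circ=\{0\}$), but there $m-\delta(L)=0$ makes the hypothesis of Part (1) unsatisfiable for $t\geq 1$, so no separate argument is needed.
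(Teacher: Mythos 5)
Your proposal is correct, but it takes a genuinely different route from the paper's. For Part (2) you observe the trivial inclusion $\{L\cap GK\neq\{0\}\}\subset\{K\cap G^{-1}L\neq\{0\}\}$ and invoke Theorem \ref{thm:approx_kinematics}-(2) under the identical hypothesis; the paper instead re-derives Part (2) from the support-function machinery (a modification of (\ref{ineq:conic_program_1})/Proposition \ref{prop:support_fcn_cgmt}, choosing $x\in\partial B_n$ with $K_x=K$ and treating $K=\R^n$ separately). Your shortcut is legitimate and strictly simpler; correctly, you also recognize that the reverse inclusion fails (a nonzero $\mu\in K\cap G^{-1}L$ may have $G\mu=0$), so Part (1) cannot be discharged the same way. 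For Part (1) you run a direct Gordon escape-through-the-mesh argument on $\phi(G)=\min_{\mu\in K\cap\partial B_n}\dist{G\mu}{L}$, using only the one-sided comparison of Theorem \ref{thm:CGMT}-(1) (which, as you note, imposes no convexity on the $\mu$-index set), whereas the paper routes through the uniform support-function bound (\ref{ineq:support_fcn_cgmt_02}) of Proposition \ref{prop:support_fcn_cgmt}, whose proof needs the quadratic regularization $\mathsf{h}_\epsilon$ and two limiting passages. Both arguments terminate in the same width comparison between $\gw(K\cap\partial B_n)$ and $\gw(L^\circ\cap\partial B_m)$ plus Gaussian concentration, so the quantitative content is identical; your route is more elementary and makes the kinship with Corollary \ref{cor:gordon_escape} (the case $L=\{0\}$) transparent, while the paper's route reuses a proposition it needs anyway for Theorem \ref{thm:conic_program}.

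One small bookkeeping caveat: the claimed identity $\phi_{\mathrm{aux}}(g,h)=\bigl[\pnorm{\Pi_{L^\circ}(g)}{}-\pnorm{\Pi_K(-h)}{}\bigr]_+$ is not exact. Carrying out the reduction gives
\begin{align*}
\phi_{\mathrm{aux}}(g,h)=\Bigl[\sup_{w\in L^\circ\cap\partial B_m}\iprod{g}{w}+\min_{\mu\in K\cap\partial B_n}\iprod{h}{\mu}\Bigr]_+,
\end{align*}
and $\pnorm{\Pi_{L^\circ}(g)}{}=\bigl(\sup_{w\in L^\circ\cap\partial B_m}\iprod{g}{w}\bigr)_+$ only coincides with the sphere-supremum when the latter is nonnegative (in general $[a-b]_+\neq[a_+-b_+]_+$). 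This is harmless: in the regime of Part (1) the term $\sup_{w\in L^\circ\cap\partial B_m}\iprod{g}{w}$ concentrates around $\gw(L^\circ\cap\partial B_m)\geq\sqrt{m-\delta(L)}-2\geq C\sqrt{t}-2>0$, so you may as well run the concentration step directly on the sphere-suprema and invoke Proposition \ref{prop:stat_dim}-(3)(4) to convert widths to statistical dimensions, exactly as the paper does. With that cosmetic fix the argument is complete.
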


Despite the similar formulation of the above theorem to that of Theorem \ref{thm:approx_kinematics}, the kinematic formula in the above Theorem \ref{thm:approx_kinematics_preimage} is far from a direct consequence of Theorem \ref{thm:approx_kinematics}. Moreover, the kinematic interpretation of the two approximate formulae in Theorems \ref{thm:approx_kinematics} and \ref{thm:approx_kinematics_preimage} is markedly different. In particular, the informal interpretation (\ref{eqn:GK_heuristic}) for $GK$ requires a crucial modification for the purpose of describing the kinematics of $G^{-1}L$: In the most interesting regime $m\leq n$, Theorem \ref{thm:approx_kinematics_preimage} shows that the random pre-image $G^{-1}L$ should now be regarded as a randomly rotated cone in $\R^n$ with statistical dimension $\delta(L)+n-m$, where the extra `dimension' $n-m$ comes from the null space behavior of $G$.


\subsubsection{An extension to general closed convex sets $K$}

The approximate kinetic formulae in both Theorems \ref{thm:approx_kinematics} and \ref{thm:approx_kinematics_preimage} rely heavily on the cone structure of $K$. Below we shall examine certain aspects of the kinematic behavior of $GK$ where $K$ is a general closed convex set in $\R^n$. In particular, the following approximate kinetic formula determines whether the random set $GK$ hits/misses a given deterministic vector $\xi \in \R^m$, for a given number $m$ of the projection dimension. 


\begin{theorem}\label{thm:generalized_gordon}
	Let $\emptyset \neq K\subset \R^n$ be a non-empty closed convex set, and $\xi \neq 0$. There exists some absolute constant $C>0$ such that the following hold for $t\geq 1$.
	\begin{enumerate}
		\item If $\sqrt{m}\leq \mathscr{T}_K(\xi)-C\sqrt{t}$, then $\Prob(\xi \in GK)\geq 1-e^{-t}$.
		\item If $\sqrt{m}\geq \mathscr{T}_K(\xi)+C\sqrt{t}$, then $\Prob(\xi \notin GK)\geq 1-e^{-t}$. 
	\end{enumerate}
    Here with $\sigma_m^2\equiv \pnorm{\xi}{}^2/m$, 
    \begin{align*}
    \mathscr{T}_K(\xi)\equiv \gw\bigg(\bigg\{\frac{\mu}{\sqrt{\pnorm{\mu}{}^2+\sigma_m^2}}: \mu \in K\bigg\}\bigg),
    \end{align*}
    where $\gw(T)\equiv \E \sup_{t \in T}\iprod{g}{t}$, $g\sim \mathcal{N}(0,I_n)$ is the Gaussian width for (a bounded measurable set) $T \subset \R^n$. 
\end{theorem}

Note that when $K$ is a non-trivial closed convex cone and $\xi \neq 0$, $\xi \in GK$ is equivalent to $GK \cap \{t\xi: t\geq 0\} \neq \{0\}$. In this case, it is easy to see $\mathscr{T}_K(\xi)=\delta(K)$, and therefore the above theorem coincides with Theorem \ref{thm:approx_kinematics}. For general closed convex sets $K$, case-by-case techniques will be needed to compute the precise value of the quantity $\mathscr{T}_K(\xi)$.

From a statistical perspective, Theorem \ref{thm:generalized_gordon} above provides a solution to Question \ref{question:linear_reg} in the more general context of minimum interpolators constrained by an arbitrarily closed convex set $K$. With notation in (\ref{def:linear_model}), as the constraint set in the definition of $\hat{\mu}_K$ in (\ref{def:cone_interpolator}) can be rewritten as
\begin{align*}
\big\{G\mu = Y: \mu \in K\big\} = \big\{G(\mu-\mu_0)=\xi: \mu \in K\big\} = \big\{\xi \in G(K-\mu_0)\big\},
\end{align*}
Theorem \ref{thm:generalized_gordon}-(1) shows that the phase transition point of the sample size for the existence of $\hat{\mu}_K$ in (\ref{def:cone_interpolator}) is given by $\mathscr{T}_{K-\mu_0}(\xi)$, i.e.,
\begin{align*}
\sqrt{m}\leq \mathscr{T}_{K-\mu_0}(\xi)-C\sqrt{t}\, &\Rightarrow\, \Prob(\hat{\mu}_K \hbox{ exists})\geq 1-e^{-t};\\
\sqrt{m}\geq \mathscr{T}_{K-\mu_0}(\xi)+C\sqrt{t}\, &\Rightarrow\, \Prob(\hat{\mu}_K \hbox{ does not exist})\geq 1-e^{-t}.
\end{align*}
 A detailed study of the behavior of $\hat{\mu}_0$ is beyond the scope of this paper and will be pursued elsewhere. We believe the techniques used in the proof of the above theorem will be highly relevant for this problem. 


\subsection{Organization}
The rest of the paper is organized as follows. Applications of the approximate kinetic formulae to Question \ref{question:logistic_reg} in logistic regression and other problems are detailed in Section \ref{section:applications}. Section \ref{section:preliminary} presents an overview of our proof technique in comparison to the existing literature, and technical preliminaries on convex geometry and Gaussian process tools. The approximate kinematic formulae are then proved in Sections \ref{section:proof_approx_kinematics} and \ref{section:proof_generalized_kinetic}. Proofs for the applications in Section \ref{section:applications} are detailed in Section \ref{section:proof_application}. 

\subsection{Some further notation}

For any pair of positive integers $m\leq n$, let $[m:n]\equiv \{m,\ldots,n\}$, and $(m:n]\equiv [m:n]\setminus\{m\}$, $[m:n)\equiv [m:n]\setminus\{n\}$. We often write $[n]$ for $[1:n]$. For $a,b \in \R$, $a\vee b\equiv \max\{a,b\}$ and $a\wedge b\equiv\min\{a,b\}$. For $a \in \R$, let $a_+\equiv a\vee 0$ and $a_- \equiv (-a)\vee 0$. 

For $x \in \R^n$, let $\pnorm{x}{p}=\pnorm{x}{\ell_p(\R^n)}$ denote its $p$-norm $(1\leq p\leq \infty)$ with $\pnorm{x}{2}$ abbreviated as $\pnorm{x}{}$. Let $B_n(r;x)\equiv \{z \in \R^n: \pnorm{z-x}{}\leq r\}$ and recall $B_n(r)= B_n(r;0), B_n= B_n(1)$. For $x,y \in \R^n$, we write $\iprod{x}{y}\equiv \sum_{i=1}^n x_iy_i$. For a matrix $A \in \R^{m\times n}$ and a measurable set $T$, we follow the notation (\ref{def:GT}) to write $AT\equiv \{At: t \in T\}\subset \R^m$. For any subspace $L\subset \R^n$, let $\proj_L: \R^n\to \R^n$ be the orthogonal projection onto $L$ and $\proj_L^\perp\equiv \mathrm{id}-\proj_L$.

We use $C_{x}$ to denote a generic constant that depends only on $x$, whose numeric value may change from line to line unless otherwise specified. $a\lesssim_{x} b$ and $a\gtrsim_x b$ mean $a\leq C_x b$ and $a\geq C_x b$ respectively, and $a\asymp_x b$ means $a\lesssim_{x} b$ and $a\gtrsim_x b$. 

\section{Applications}\label{section:applications}

To demonstrate the broad applicability the approximate kinetic formulae in Section \ref{section:main_results}, we now present applications into three distinct problems arising from statistical learning, mathematical programming, and asymptotic geometric analysis. 

\subsection{Application I: Cone constrained MLEs in logistic regression}

Recall the setup of logistic regression in (\ref{eqn:glm}) with data $\{(X_i,Y_i)\in \R^n\times \{\pm 1\}: i \in [m]\}$, and the cone-constrained maximum likelihood estimator (MLE) $\hat{\beta}_K$, defined as any maximizer of the likelihood $\beta \mapsto \ell(\beta)$ over $\beta \in K$ (the likelihood function $\ell(\cdot)$ is defined in (\ref{eqn:logistic_likelihood})), whenever such a maximizer exists.

With the help of Theorem \ref{thm:approx_kinematics}, we provide a solution to Question \ref{question:logistic_reg} by proving a phase transition for the existence of cone-constrained MLEs under a standard Gaussian design for $\{X_i\}$ and the global null $\beta_0=0$: (i) if $m>2\delta(K)$, the cone-constrained MLE $\hat{\beta}_K$ exists with high probability; (ii) if $m<2\delta(K)$, cone-constrained MLEs do not exist, i.e., $\beta\mapsto \ell(\beta), \beta \in K$, attains its maximum at $\infty$ with high probability. The formal statement is as follows.

\begin{theorem}\label{thm:logistic_cone_MLE}
	Suppose that $\{(X_i,Y_i)\in \R^n\times \{\pm 1\}: i \in [m]\}$ are i.i.d. samples generated from the model (\ref{eqn:glm}). Suppose further that $X_1,\ldots,X_m$ are i.i.d. $\mathcal{N}(0,I_n)$ and $\beta_0=0$. There exists some universal constant $C>0$ such that the following statements hold for $t\geq 1$.
	\begin{enumerate}
		\item If $\sqrt{m}\leq \sqrt{2\delta(K)}-C\sqrt{t}$, then 
		\begin{align*}
		\Prob\big(\hbox{The cone constrained MLE $\hat{\beta}_K$ does not exist}\big)\geq 1-e^{-t},
		\end{align*}
		\item If $\sqrt{m}\geq \sqrt{2\delta(K)}+C\sqrt{t}$, then 
		\begin{align*}
		\Prob\big(\hbox{The cone constrained MLE $\hat{\beta}_K$ exists}\big)\geq 1-e^{-t}.
		\end{align*}
	\end{enumerate}
\end{theorem}

Here are two concrete examples:
\begin{itemize}
	\item Consider the non-negative orthant constraint $K=\R_{\geq 0}^n$. In this case, $\delta(K)=\delta(\R_{\geq 0}^n)=n/2$ (cf. \cite[Section 3.2]{amelunxen2014living}), and therefore the precise phase transition point for the existence of $\beta_{ \R_{\geq 0}^n}$ is given by $m\sim n$.
	\item Consider the monotone cone $K=\mathcal{M}_{\uparrow}\equiv \{\mu\in \R^n: \mu_1\leq \cdots \leq \mu_n\}$. In this case, $\delta(K)\sim \log n$ (as $n \to \infty$, cf. \cite[Section 3.5]{amelunxen2014living}), and therefore the precise phase transition point for the existence of $\beta_{\mathcal{M}_{\uparrow}}$ is given by $m\sim 2\log n$.
\end{itemize}

The key to link the above result to the approximate kinetic formulae in Section \ref{section:main_results} is the approximate equivalence
\begin{align}\label{ineq:logistic_MLE_equiv}
\big\{\hbox{The cone constrained MLE $\hat{\beta}_K$ does not exist}\big\}\approx \big\{GK\cap \R_{\geq 0}^m\neq \{0\}\big\}. 
\end{align}
See Proposition \ref{prop:logistic_exist} for a precise statement of (\ref{ineq:logistic_MLE_equiv}). 

Clearly, Theorem \ref{thm:logistic_cone_MLE} recovers the asymptotic phase transitions in \cite{cover1965geometrical,candes2020phase} for the non-/existence of the unconstrained MLE $\hat{\beta}_{\R^n}$ under the global null $\beta_0=0$, with an optimal probability estimate. Extending the phase transitions in Theorem \ref{thm:logistic_cone_MLE} for the constrained MLE $\hat{\beta}_K$ to general $\beta_0$'s is beyond the scope of a direct application of the results in Section \ref{section:main_results}, and will therefore be pursued elsewhere.

\subsection{Application II: Conic programs with random constraints}

Let $K\subset \R^n$ be a closed convex cone. For given $x \in \R^n$ and $A \in \R^{m\times n}$ and $b \in \R^m$, consider the following standard form of conic program (CP):
\begin{align}\label{def:conic_program}
&\max_{\mu \in \R^n}\, \iprod{x}{\mu}\quad \hbox{subject to}\quad A\mu = b,\, \mu \in K.
\end{align}
Several canonical examples of conic programs include:
\begin{itemize}
	\item linear programs with $K=\R_{\geq 0}^n$,
	\item second-order cone programs with $K=\{(x,y)\in \R^{(n-1)+1}: \pnorm{x}{}\leq y\}$, and 
	\item semi-definite programs with $K=\{X \in \R^{n_0\times n_0}: X=X^\top, X\geq 0\}\subset \R^{n_0\times n_0}\cong \R^n$, where $n=n_0^2$ for some integer $n_0 \in \mathbb{Z}_{\geq 1}$.
\end{itemize}
The readers are referred to \cite{bental2001lectures,boyd2004convex} for more details on CPs.

Here we are interested in the `average-case' feasibility of the CP~(\ref{def:conic_program}), where, following the standard terminology in mathematical programming, the CP~(\ref{def:conic_program}) is called \emph{feasible} if the constraint set $\{\mu \in K : A\mu = b\}$ is nonempty, and \emph{infeasible} if this constraint set is empty. More concretely, we will show that under a Gaussian random constraint $A = G$ and a deterministic choice of $x \in \partial B_n$, the CP~(\ref{def:conic_program}) exhibits different phase transitions depending on whether $b = 0$ or $b \neq 0$. The behavior can be characterized by the following geometric quantity associated with $K$ and $x$: 
\begin{align}\label{def:K_x}
K_x \equiv K \cap \{\mu \in \R^n : \iprod{x}{\mu} \geq 0\}.
\end{align}
Clearly, $K_x \subset K$ and $\delta(K_x) \leq \delta(K)$.

\begin{theorem}\label{thm:conic_program}
	Fix two positive integers $m,n\in \N$, $x \in \partial B_n$ and a non-trivial closed convex cone $K\subset \R^n$. Consider the conic programming (\ref{def:conic_program}) with the standard Gaussian random constraint $A=G \in \R^{m\times n}$. There exists some universal constant $C>0$ such that the following statements hold for $t\geq 1$:
	\begin{itemize}
		\item (\textbf{Homogeneous case $b=0$})
		\begin{enumerate}
			\item If $\sqrt{m}\geq \sqrt{\delta(K_x)}+C\sqrt{t}$, then 
			\begin{align*}
			\Prob\big(\hbox{The value of the CP (\ref{def:conic_program})}=0\big)\geq 1-e^{-t}.
			\end{align*}
			\item If $\sqrt{m}\leq \sqrt{\delta(K_x)}-C\sqrt{t}$, then 
			\begin{align*}
			\Prob\big(\hbox{The value of the CP (\ref{def:conic_program})}=\infty\big)\geq 1-e^{-t}.
			\end{align*}
		\end{enumerate}
		\item  (\textbf{In-homogeneous case $b\neq 0$})
		\begin{enumerate}
			\item If $\sqrt{m}\geq \sqrt{\delta(K)}+C\sqrt{t}$, then 
			\begin{align*}
			\Prob\big(\hbox{The CP (\ref{def:conic_program}) is infeasible}\big)\geq 1-e^{-t}.
			\end{align*}
			\item If $\sqrt{\delta(K_x)}+C\sqrt{t}\leq \sqrt{m}\leq \sqrt{\delta(K)}-C\sqrt{t}$, then 
			\begin{align*}
			\Prob\big(\hbox{The value of the CP (\ref{def:conic_program})}\leq 0\big)\geq 1-e^{-t}.
			\end{align*}
			\item If $\sqrt{m}\leq \sqrt{\delta(K_x)}-C\sqrt{t}$, then 
			\begin{align*}
			\Prob\big(\hbox{The value of the CP (\ref{def:conic_program})}=\infty\big)\geq 1-e^{-t}.
			\end{align*}
		\end{enumerate}
	\end{itemize}
\end{theorem}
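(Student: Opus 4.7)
The plan is to reduce each of the three probabilistic statements about the CP value to an event concerning the intersection of the random image cone $GK$ (or $GK_x$) with the ray through $b$, or of the random kernel $\ker(G)$ with $K_x$, and then appeal to Theorem~\ref{thm:approx_kinematics} together with the companion kinematic formula for $K\cap G^{-1}L$ referred to in the abstract. The key geometric reformulations are: (i) the CP is feasible iff $b\in GK$; (ii) the CP value is $\leq 0$ iff no feasible $\mu$ satisfies $\iprod{x}{\mu}>0$, equivalently $b\notin GK_x$ modulo the measure-zero boundary $\iprod{x}{\mu}=0$; and (iii) the CP value equals $+\infty$ iff it is feasible and the recession cone $K\cap\ker(G)$ contains some $\mu$ with $\iprod{x}{\mu}>0$.

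\textbf{In-homogeneous case $b\neq 0$.} Let $L:=\{tb:t\geq 0\}\subset\R^m$, a closed convex half-line with $\delta(L)=1/2$. By the cone property, $b\in GK'$ iff $L\cap GK'\neq\{0\}$ for every closed convex cone $K'$. Part~(1) is immediate from Theorem~\ref{thm:approx_kinematics}(1) applied to $(K,L)$: in the regime $\sqrt{m}\geq \sqrt{\delta(K)}+C\sqrt{t}$ (absorbing $\delta(L)=1/2$ into the constant), $L\cap GK=\{0\}$ w.h.p., i.e., $b\notin GK$. Part~(2) applies the same theorem to $(K_x,L)$, giving $L\cap GK_x=\{0\}$ w.h.p.\ and therefore ruling out any feasible $\mu$ with $\iprod{x}{\mu}\geq 0$; intersected with the feasibility event supplied by Theorem~\ref{thm:approx_kinematics}(2) applied to $(K,L)$ under $\sqrt{m}\leq \sqrt{\delta(K)}-C\sqrt{t}$, this yields a feasible CP with value $\leq 0$. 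In part~(3), feasibility is automatic since $\delta(K_x)\leq\delta(K)$ forces $\sqrt{m}\leq \sqrt{\delta(K)}-C\sqrt{t}$; the unbounded $x$-direction is furnished by the homogeneous-case argument described next.

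\textbf{Homogeneous case $b=0$.} Feasibility is automatic, and by the cone property the CP value is $0$ or $+\infty$ according to whether $K\cap\ker(G)$ meets $\{\iprod{x}{\mu}>0\}$. The intersection $K_x\cap\ker(G)$ falls under the companion kinematic formula for $K\cap G^{-1}L$ specialized to $L=\{0\}$, which is proved by the same CGMT-based random-process approach as Theorem~\ref{thm:approx_kinematics} applied directly to the min-max problem $\min_{\mu\in K_x\cap\partial B_n}\pnorm{G\mu}{}$: under $\sqrt{m}\geq \sqrt{\delta(K_x)}+C\sqrt{t}$ one obtains $K_x\cap\ker(G)=\{0\}$ w.h.p., forcing every non-zero $\mu\in K\cap\ker(G)$ to have $\iprod{x}{\mu}<0$ and hence CP value equal to $0$; under $\sqrt{m}\leq \sqrt{\delta(K_x)}-C\sqrt{t}$ one obtains $K_x\cap\ker(G)\neq\{0\}$ w.h.p., producing some $\mu_0\in K\cap\ker(G)$ with $\iprod{x}{\mu_0}\geq 0$.

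\textbf{Main obstacle.} The delicate step common to parts~(2) of the homogeneous case and~(3) of the in-homogeneous case is that the kinematic formula only delivers $\mu_0\in K_x\cap\ker(G)$ with $\iprod{x}{\mu_0}\geq 0$, whereas value $+\infty$ requires the strict inequality $\iprod{x}{\mu_0}>0$. To bridge this gap I would replace $K_x$ by the strictly shrunken closed convex cone
\[
K_x^{(\epsilon)}:=K\cap\bigl\{\mu\in\R^n:\iprod{x}{\mu}\geq \epsilon\pnorm{\mu}{}\bigr\},\qquad \epsilon>0,
\]
and apply the same kinematic formula to $K_x^{(\epsilon)}$ in place of $K_x$. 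Any non-trivial $\mu\in K_x^{(\epsilon)}\cap\ker(G)$ then automatically satisfies $\iprod{x}{\mu}\geq \epsilon\pnorm{\mu}{}>0$, supplying the desired unbounded direction. The remaining technical residue is a continuity estimate of the form $\delta(K_x^{(\epsilon)})\to\delta(K_x)$ as $\epsilon\downarrow 0$, which should follow by dominated convergence from the representation $\delta(K_x^{(\epsilon)})=\E\pnorm{\Pi_{K_x^{(\epsilon)}}(g)}{}^2$ together with standard monotone convergence of the projections onto the nested family $\{K_x^{(\epsilon)}\}_{\epsilon>0}$; $\epsilon$ is then fixed small enough that the gap $\sqrt{\delta(K_x^{(\epsilon)})}-\sqrt{m}$ still exceeds a constant multiple of $\sqrt{t}$.
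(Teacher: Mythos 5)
Your reduction is a genuinely different route from the paper's. The paper does not invoke the kinematic formulae here: it proves a bespoke two-sided CGMT estimate (Proposition \ref{prop:support_fcn_cgmt}) for the constrained support function $\sup\{\iprod{x}{\mu}:\mu\in K,\ G\mu\in L\}$, evaluates the auxiliary optimization in closed form, and identifies the surviving feasible directions via Lemma \ref{lem:conic_program_1} (for $x\notin K^\circ$, the closure of $\{\mu/\pnorm{\mu}{}:\mu\in K,\ \iprod{x}{\mu}=a\}$ equals $K_x\cap\partial B_n$); the $0/\infty$ dichotomy governed by $\delta(K_x)$ then falls out of a single concentration step. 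You instead translate each claim into an intersection event ($b\in GK$, $b\in GK_x$, $K_x\cap\ker(G)\neq\{0\}$) and quote Theorems \ref{thm:approx_kinematics} and \ref{thm:approx_kinematics_preimage} (the latter with $L=\{0\}$, which is legitimate: one direction is Lemma \ref{lem:cone_eigenvalue}, the other is supplied by the two-sided CGMT machinery). This is more modular and makes the geometry transparent; translations (i)--(ii) and the direction of (iii) you actually use are correct, and the in-homogeneous parts (1)--(2) go through as you describe after a union bound and after absorbing $\delta(L_b)=1/2$ into the constant.

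The genuine gap is the step you dismiss as "technical residue": the continuity $\delta(K_x^{(\epsilon)})\to\delta(K_x)$ does \emph{not} follow from monotone or dominated convergence alone. The cones $K_x^{(\epsilon)}$ increase to $(K\cap\{\iprod{x}{\mu}>0\})\cup\{0\}$, so Lemma \ref{lem:sup_set_conv} only yields $\delta(K_x^{(\epsilon)})\uparrow\delta(\mathrm{cl}(K\cap\{\iprod{x}{\mu}>0\}))$; you still must show that $K\cap\{\iprod{x}{\mu}>0\}$ is dense in $K_x$. This fails when $x\in K^\circ$: for $K=\{\mu:\iprod{x}{\mu}\leq 0\}$ one has $K_x=x^{\perp}$ with $\delta(K_x)=n-1$, while $K_x^{(\epsilon)}=\{0\}$ for every $\epsilon>0$ (and indeed the CP value is identically $0$ there, so no argument can produce the conclusion; the paper's own proof derives the $\infty$-value claims only for $x\notin K^\circ$). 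For $x\notin K^\circ$ the density does hold, but it needs the perturbation argument of Lemma \ref{lem:conic_program_1}: pick $\mu_0\in K$ with $\iprod{x}{\mu_0}>0$ and approximate $v\in K_x$ by $v+\lambda\mu_0$. So you must split on whether $x\in K^\circ$ and, in the case $x\notin K^\circ$, prove this density before passing to the limit in $\epsilon$. With that lemma inserted, your $\epsilon$-shrinkage correctly resolves the strict-inequality issue and the remainder of the argument stands.
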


The above theorem is closely related to \cite{amelunxen2015intrinsic} that studies the CP (\ref{def:conic_program}) with the same Gaussian random constraint $A=G$, and the additional assumption $x\sim \mathcal{N}(0,I_n)$. Under this additional Gaussian assumption on $x$, \cite[Theorems 3.1 and 3.2]{amelunxen2015intrinsic} provide exact kinematic formulae for (slight variations of) some probabilities in the above theorem, expressed via the intrinsic volumes associated with $K$. Using concentration of intrinsic volumes, \cite[Section 8]{amelunxen2014living} then shows that in the in-homogeneous case $b\neq 0$ with $x\sim \mathcal{N}(0,I_n)$, the phase transition reads:
\begin{enumerate}
	\item If $\sqrt{m}\geq \sqrt{\delta(K)}+C\sqrt{t}$, then the CP (\ref{def:conic_program}) is infeasible with probability at least $1-e^{-t}$.
	\item If $\sqrt{m}\leq \sqrt{\delta(K)}-C\sqrt{t}$, then the value of the CP (\ref{def:conic_program}) is $\infty$ with probability at least $1-e^{-t}$.
\end{enumerate}
Compared to the results for the in-homogeneous case of Theorem \ref{thm:conic_program}, it is immediately seen that a deterministic choice of $x \in \partial B_n$ significantly changes the phase transition behavior of the CP (\ref{def:conic_program}). In particular, a new intermediate regime $\delta(K_x)\ll m\ll \delta(K)$ appears in which the CP (\ref{def:conic_program}) is feasible but its value is finite.

We now mention the connection between the above Theorem \ref{thm:conic_program} and the approximate kinetic formulae in Section \ref{section:main_results}. The proof of Theorem \ref{thm:conic_program} heavily borrows from the idea used in the proof of Theorem \ref{thm:approx_kinematics} (cf. the estimates (\ref{ineq:upper_estimate_supp_fcn})-(\ref{ineq:lower_estimate_supp_fcn}) ahead). In particular, for any non-trivial closed convex cone $L \subset \R^m$, the following two-sided estimate holds:
\begin{align}\label{ineq:conic_program_reduction}
\max_{\mu \in K\cap B_n: G\mu \in L}\iprod{x}{\mu}\stackrel{\Prob}{\approx} \sup_{\mu \in K\cap B_n,\iprod{g}{\mu}\geq \sup\limits_{v\in L^\circ \cap \partial B_m}\iprod{h}{v} }\iprod{x}{\mu}.
\end{align}
See Proposition \ref{prop:support_fcn_cgmt} for a precise formulation of the above probabilistic equivalence. The above estimate also plays a crucial role in the proof of Theorem \ref{thm:approx_kinematics_preimage}.

\subsection{Application III: A local Gaussian Dvoretzky-Milman Theorem}\label{section:local_dm}

The  classical Gaussian Dvoretzky-Milman Theorem (\cite{dvoretzky1959theorem,dvoretzky1961some,milman1971new}), when adapted to our setting, says that a sufficiently low dimensional image of the possibly high dimension random set $G(K\cap B_n)$ is an approximately deterministic round ball with radius $\sqrt{\delta(K)}$. Its precise form reads as follows: there exists some universal constant $c>0$ such that for any $\epsilon \in (0,1/2)$, if $k$ is an integer with $1\leq k \leq c\epsilon^2 \delta(K)$, then with probability at least $1-\exp(-c\epsilon^2 \delta(K))$, 
\begin{align}\label{eqn:DM}
(1-\epsilon)\cdot B_k\big(\sqrt{\delta(K)}\big) \subset \Pi_{m\to k} \big(G K_n\big)\subset (1+\epsilon)\cdot B_k\big(\sqrt{\delta(K)}\big).
\end{align}
Here $K_n\equiv K\cap B_n$, and $\Pi_{m\to k}:\R^m\to \R^k$ is the natural projection map onto the first $k$-coordinates\footnote{Note that while we may simply write $\Pi_{m\to k}(G K_n)$ as $G_{k,n}K_n$ where $G_{k,n} \in \R^{k\times n}$ contains i.i.d. $\mathcal{N}(0,1)$. Here we opt to the form $\Pi_{m\to k}(G K_n)$ in (\ref{eqn:DM}), as it connects naturally to the `local' version we examine in Theorem \ref{thm:local_dm}.}, whereas $B_k(r)$ denotes an $\ell_2$ ball of $\R^k$ with radius $r$. For more general versions/other variations of (\ref{eqn:DM}), the readers are referred to e.g., \cite[Theorem 4.4]{pisier1989volume}, \cite[Theorem 9.2.6]{artstein2015asymptotic}, and \cite[Theorem 11.3.3]{vershynin2018high}. A tight version of the Gaussian Dvoretzky-Milman Theorem with optimal constants is recently obtained in \cite[Theorem 2.5]{han2024exact}.

Below we provide a `local' version of (\ref{eqn:DM}) by looking at low-dimensional images of the random set $L\cap GK_n$, where $L$ is a fixed subspace. In particular, the following theorem shows that the shape of $\Pi_{m\to k}(L\cap GK_n)$ is again approximately a deterministic round ball, but with a possible shrinkage in the radius. 

\begin{theorem}\label{thm:local_dm}
	Suppose that $K \subset \R^n$ is a non-trivial closed convex cone and $L\equiv\{x \in \R^m: x|_{(\ell:m]}=0\}$ for some integer $\ell$ with $\max\{1, (m-\delta(K))_+\}\leq \ell\leq m$. Further assume that there exists some $\tau \in (0,1/2)$ such that $(m-\ell)\leq (1-\tau)\cdot \delta(K)$. Then there exists some small constant $c=c(\tau)>0$ with the following statement holding true. For any $\epsilon \in (0,1/2)$, if $k$ is an integer with $1\leq k\leq \min\{\ell, c\cdot \mathscr{L}\big(\epsilon^2\delta(K)\big)\}$, then with probability at least $1-\exp\big(-c \epsilon^2 \delta(K)\big)$,
	\begin{align*}
	(1-\epsilon)\cdot B_k(\sqrt{\delta(K)-m+\ell}) \subset \Pi_{m\to k} \big(L\cap G K_n\big)\subset (1+\epsilon)\cdot B_k(\sqrt{\delta(K)-m+\ell}).
	\end{align*}
	Here $K_n$, $\Pi_{m\to k}:\R^m\to \R^k$ are as above, and $\mathscr{L}(x)\equiv x/\log\big(e \vee (1/x)\big)$. 
\end{theorem}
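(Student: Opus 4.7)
The plan is to reduce both ball containments to a uniform two-sided estimate on the support function of $C \equiv \Pi_{m\to k}(L \cap GK_n)$. Since $C$ is convex with $0 \in C$, the two inclusions are equivalent to
\[(1-\epsilon)r \;\leq\; \mathsf{h}_C(x) \;\leq\; (1+\epsilon)r \qquad \text{for every } x\in\partial B_k,\]
where $r \equiv \sqrt{\delta(K)-m+\ell}$. Decomposing $G = \binom{G_1}{G_2}$ with $G_1\in\R^{\ell\times n}$ and $G_2\in\R^{(m-\ell)\times n}$, and letting $G_{1,k}\in\R^{k\times n}$ denote the first $k$ rows of $G_1$, a direct calculation with the zero-extension $\tilde{x}\in L$ of $x \in \partial B_k$ gives
\[\mathsf{h}_C(x) \;=\; \sup_{\nu\in K_n\cap V}\,\iprod{G_{1,k}^\top x}{\nu},\qquad V\equiv\ker(G_2).\]
The crucial structural point is that $G_{1,k}^\top x\sim \mathcal{N}(0,I_n)$ is independent of $V$ and has the same marginal law for every $x\in\partial B_k$.

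The main technical step, which I expect to be the dominant obstacle, is to show that, with probability $\geq 1-\exp(-c\epsilon^2\delta(K))$ jointly over $V$ and an independent standard Gaussian $g\sim\mathcal{N}(0,I_n)$,
\[\sup_{\nu\in K_n\cap V}\iprod{g}{\nu} \;=\; r \;\pm\; O\bigl(\epsilon\sqrt{\delta(K)}\bigr).\]
For this I would apply the convex Gaussian min-max theorem to the Lagrangian reformulation $\sup_{\nu\in K_n}\inf_{\|w\|\leq R}[\iprod{g}{\nu}+\iprod{w}{G_2\nu}]$ (with $R\to\infty$), replacing $\iprod{w}{G_2\nu}$ by the Gordon surrogate $\|\nu\|\iprod{g'}{w}+\|w\|\iprod{h'}{\nu}$ with fresh independent Gaussians $g'\sim\mathcal{N}(0,I_{m-\ell})$, $h'\sim\mathcal{N}(0,I_n)$. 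Optimizing first the direction and then the magnitude of $w$ converts the problem to the cone maximization $\sup_{\nu\in K\cap\partial B_n:\,\iprod{h'}{\nu}\geq\|g'\|}\iprod{g}{\nu}$, whose further Lagrangian analysis yields $\min_{\lambda\geq 0}\bigl[\|\Pi_K(g+\lambda h')\|-\lambda\|g'\|\bigr]$; using $\|\Pi_K(g+\lambda h')\|\approx\sqrt{(1+\lambda^2)\delta(K)}$ (via the rescaling $g+\lambda h'\stackrel{d}{=}\sqrt{1+\lambda^2}\,g_0$ and concentration of $\|\Pi_K(g_0)\|^2$ around $\delta(K)$) together with $\|g'\|\approx\sqrt{m-\ell}$, the minimum is attained at $\lambda_*^2=(m-\ell)/(\delta(K)-(m-\ell))$ with exact value $\sqrt{\delta(K)-(m-\ell)}=r$. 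The hypothesis $(m-\ell)\leq(1-\tau)\delta(K)$ keeps $\lambda_*$ bounded and the relevant concentration non-degenerate.

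Given this mean identity, the remainder is a standard net argument. On the upper side, $\sup_{x\in\partial B_k}\mathsf{h}_C(x)=\sup_{\nu\in K_n\cap V}\|G_{1,k}\nu\|$ is bounded via Chevet-Gordon on the bilinear Gaussian process $(x,\nu)\mapsto\iprod{x}{G_{1,k}\nu}$, yielding expectation $\leq\sqrt{k}+r+O(1)$ plus $1$-Lipschitz Gaussian concentration; the $\sqrt{k}$ term is absorbed into $\epsilon\sqrt{\delta(K)}$ under $k\leq\epsilon^2\delta(K)/\tau$. On the lower side, each $\mathsf{h}_C(x)$ is $1$-Lipschitz in $G_{1,k}$, so pointwise Gaussian concentration combined with a union bound over an $\epsilon$-net of $\partial B_k$ (cardinality $\leq(3/\epsilon)^k$) gives $\mathsf{h}_C(x)\geq(1-\epsilon/2)r$ at every net point; extension to all $x\in\partial B_k$ follows from a second Chevet-Gordon argument bounding $\sup_{\|x-x'\|\leq\epsilon}|\mathsf{h}_C(x)-\mathsf{h}_C(x')|\leq O(\epsilon\sqrt{\delta(K)})$. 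The trade-off $k\log(1/\epsilon)\ll\epsilon^2\delta(K)$ is precisely the budget $k\leq c(\tau)\mathscr{L}(\epsilon^2\delta(K))$ of the theorem, with the $\tau$-dependence arising from rescaling $\epsilon\sqrt{\delta(K)}$ by $\sqrt{\tau}$ to fit inside $\epsilon r$. A final Markov-style passage converts the joint-in-$(g,G_2)$ concentration of the mean identity into a conditional-in-$V$ statement, completing the proof.
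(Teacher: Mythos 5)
Your reduction is a genuinely different, and in some respects cleaner, starting point than the paper's: splitting $G$ into the constraint block $G_2$ and the projection block $G_{1,k}$, so that $\Pi_{m\to k}(L\cap GK_n)=G_{1,k}(K_n\cap V)$ with $V=\ker(G_2)$ and $\mathsf{h}_C(x)=\sup_{\nu\in K_n\cap V}\iprod{G_{1,k}^\top x}{\nu}$, where $G_{1,k}^\top x\sim\mathcal{N}(0,I_n)$ is independent of $V$, is correct, and the upper half of your main estimate as well as the net argument are sound in outline. The gap is the \emph{lower} half of the main technical step. After CGMT you must prove a sharp high-probability lower bound on the auxiliary value $\sup\{\iprod{g}{\nu}:\nu\in K\cap\partial B_n,\ \iprod{h'}{\nu}\geq\pnorm{g'}{}\}$, equivalently (granting Slater and strong duality) on $\inf_{\lambda\geq 0}\big[\pnorm{\Pi_K(g+\lambda h')}{}-\lambda\pnorm{g'}{}\big]$. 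Substituting the pointwise concentration approximations $\pnorm{\Pi_K(g+\lambda h')}{}\approx\sqrt{(1+\lambda^2)\delta(K)}$ and $\pnorm{g'}{}\approx\sqrt{m-\ell}$ and then minimizing the resulting \emph{deterministic} function at $\lambda_*$ only controls the infimum from \emph{above} (this is weak duality: evaluating the dual at one multiplier). To control the primal from below you must either exhibit a near-optimal feasible $\nu$ --- and the natural candidate $\Pi_K(g+\lambda_* h')/\pnorm{\Pi_K(g+\lambda_* h')}{}$ requires separate concentration of $\iprod{g}{\Pi_K(g+\lambda_* h')}$ and $\iprod{h'}{\Pi_K(g+\lambda_* h')}$, which are not $O(1)$-Lipschitz functionals --- or make the approximation uniform over the random minimizing $\lambda$. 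A grid-plus-union-bound in $\lambda$ (even exploiting convexity of the dual objective) costs an extra $\log(1/\epsilon)$-type factor in the exponent, which is not affordable in all regimes the theorem covers (e.g.\ $\epsilon^2\delta(K)=O(1)$ with $\epsilon\to 0$), so the claimed probability $1-\exp(-c\epsilon^2\delta(K))$ is not delivered by the sketch as written.

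This is precisely the obstruction the paper's proof is engineered to avoid. By keeping the direction $x$ and the subspace structure of $L$ inside the min-max problem, the inner dual optimization in Propositions \ref{prop:DM_upper} and \ref{prop:DM_lower} becomes the scalar problem $\inf_{\beta\geq 1}\{\beta\iprod{g}{\mu}-\sqrt{\beta^2-1}\,\pnorm{\mu}{}H\}$, which Lemma \ref{lem:fcn_Q} solves in closed form as $\{\iprod{g}{\mu}^2-\pnorm{\mu}{}^2H^2\}_+^{1/2}$; the whole auxiliary value then collapses to an explicit algebraic function of exactly two independent concentrating scalars, namely $\sup_{\mu\in K\cap B_n}\iprod{g}{\mu}$ and a $\chi$-variable $H$ with $m-\ell$ degrees of freedom, and both tail directions follow from two applications of Proposition \ref{prop:gaussian_conc_generic} with no uniformity-in-dual-variable issue. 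In your formulation the corresponding object is $\gw(K_n\cap V)$, i.e.\ the width of $K$ intersected with a random circular cone in the auxiliary problem, and no analogous closed-form collapse is supplied. (A secondary, fixable point: since the Lipschitz constant of $x\mapsto\mathsf{h}_C(x)$ is of order $\sqrt{\delta(K)}$ rather than $1$, your net at scale $\epsilon$ costs $k\log(1/\epsilon)$ in the exponent, which matches the theorem's admissible range $k\lesssim\mathscr{L}(\epsilon^2\delta(K))$ only up to a logarithmic discrepancy in some regimes.)
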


Clearly, the standard form of Gaussian Dvoretzky-Milman Theorem (\ref{eqn:DM}) can be recovered using the above theorem by setting $L=\R^m$ (with a logarithmically worse dependence of the dimension $k$ with respect to $\epsilon$). 

From a different perspective, Theorem \ref{thm:local_dm} above also provides further understanding to the stochastic geometry of $L\cap GK_n$ beyond the scope of Theorem \ref{thm:approx_kinematics}, at least when $L$ is a subspace. In particular, in the most interesting regime $\ell+\delta(K)\gg m$, Theorem \ref{thm:approx_kinematics}-(2) only proves that $L\cap GK_n \neq \{0\}$ is a non-trivial random set with high probability. Here Theorem \ref{thm:local_dm} provides a substantially refined, almost deterministic low dimensional description of the stochastic geometry of $L\cap G K_n$: with high probability, $\Pi_{m\to k}(L\cap GK_n)\approx B_k(\sqrt{\delta(K)-m+\ell})$.

The proof of Theorem \ref{thm:local_dm} is based on the following improved two-sided estimate for (\ref{ineq:upper_estimate_supp_fcn})-(\ref{ineq:lower_estimate_supp_fcn}) used in the proof of Theorem \ref{thm:approx_kinematics} that exploits the additional subspace structure of $L$:
\begin{align}\label{ineq:dm_two_sided_estimate}
\mathsf{h}_{L\cap GK_n}(x)\stackrel{\Prob}{\approx}  \bigg\{\bigg(\sup_{\mu \in K\cap B_n}\iprod{g}{\mu}\bigg)^2-\bigg(\sup_{v \in L^\circ\cap B_m} \iprod{h}{v}\bigg)^2\bigg\}_+^{1/2}.
\end{align}
In particular, we prove in Propositions \ref{prop:DM_upper} and \ref{prop:DM_lower} a precise version of (\ref{ineq:dm_two_sided_estimate}) with optimal Gaussian tail estimates for each and every $x\in L\cap \partial B_m$. The claim of Theorem \ref{thm:local_dm} then follows by a standard $\epsilon$-net argument.

\section{Overview of the proof and preliminaries}\label{section:preliminary}


\subsection{Brief review of proof method in \cite{amelunxen2014living}}\label{subsection:ALMT_proof_technique}
Before detailing the proof techniques of our approximate kinetic formulae above, let us first review the method of proof in \cite{amelunxen2014living} for (\ref{eqn:ALMT}) with a similar appearance. The proof of (\ref{eqn:ALMT}) in \cite{amelunxen2014living} is based on the \emph{exact kinematic formula} (cf. \cite[Theorem 6.5.6]{schneider2008stochastic}):
\begin{align}\label{eqn:exact_kinematic_formula}
\Prob\big(K\cap O_n L\neq \{0\}\big)=\sum_{i=0}^n \big(1+(-1)^{i+1}\big)\sum_{j=i}^n \mathscr{V}_j(K)\cdot \mathscr{V}_{n+i-j}(L).
\end{align}
Here $\{\mathscr{V}_j(K)\}\subset [0,1]$'s are the so-called \emph{intrinsic volumes} of the cone $K$ satisfying $\sum_{j=0}^n \mathscr{V}_j(K)=1$; see \cite[Section 5]{amelunxen2014living} for more background knowledge of the notion of intrinsic volumes.

The key step in \cite{amelunxen2014living} to derive (\ref{eqn:ALMT}) from the exact formula (\ref{eqn:exact_kinematic_formula}) is to prove `concentration' of $\{\mathscr{V}_j(K)\}$ viewed as a probability measure on $\{0,\ldots,n\}$. 
To illustrate the main idea, suppose further $L$ is a subspace of dimension $\ell$. Then $\mathscr{V}_j(L)=\bm{1}_{j=\ell}$, and  (\ref{eqn:exact_kinematic_formula}) reduces to the \emph{Crofton formula}:
\begin{align}\label{eqn:crofton_formula}
\Prob\big(K\cap O_n L\neq \{0\}\big)= 2\sum_{\substack{j=n-\ell+1,\\ j-(n-\ell+1)\textrm{ is even}}}^n \mathscr{V}_j(K)\equiv 2\mathscr{H}_{n-\ell+1}(K).
\end{align}
As a consequence of (i) the concentration of $\{\mathscr{V}_j(K)\}$ around its mean $\delta(K)$ (cf. \cite[Theorem 6.1]{amelunxen2014living}, \cite[Corollary 4.10]{mccoy2014from}), and (ii) the interlacing property $\sum_{j=n-\ell+1}^n \mathscr{V}_j(K) \leq 2\mathscr{H}_{n-\ell+1}(K)\leq \sum_{j=n-\ell}^n \mathscr{V}_j(K)$ (cf. \cite[Proposition 5.9]{amelunxen2014living}), the right hand side of the above display (\ref{eqn:crofton_formula}) is either close to $0$ or $1$, according to whether $\ell+\delta(K)\ll n$ or $\ell+\delta(K)\gg n$.

As have been clear now, the conic integral geometry approach of \cite{amelunxen2014living} to prove (\ref{eqn:ALMT}) relies heavily on the existence of the exact formula (\ref{eqn:exact_kinematic_formula}); see also \cite{goldstein2017gaussian}. On the other hand, such exact kinematic formulae are proved by exploiting, in an essential way, the fact that $O_n$ induces the unique invariant probability (Haar) measure on $\mathrm{O}(n)$ (cf. \cite[Section 5]{schneider2008stochastic}), and therefore unfortunately do not admit a direct extension to the probability $\Prob(L\cap GK\neq \{0\})$ of interest in Theorem \ref{thm:approx_kinematics}, where $L,K$ are cones of possibly different dimensions. Moreover, such kinematic formulae are also unavailable for general closed convex sets in the setting of Theorem \ref{thm:generalized_gordon}. See \cite[Remark 2.3]{amelunxen2014living} for some related discussions.

\subsection{Our method of proof}
Here we take a random process approach to prove the approximate kinetic formulae in Section \ref{section:main_results}. Our method of proof is based on a two-sided version of Gordon's Gaussian min-max comparison inequality \cite{gordon1985some,gordon1988milman}, known as the convex Gaussian min-max theorem in the statistical learning and information theory literature, cf. \cite{stojnic2013framework,thrampoulidis2018precise}. The following version is taken from \cite[Corollary G.1]{miolane2021distribution}.

\begin{theorem}[Convex Gaussian Min-Max Theorem]\label{thm:CGMT}
	Suppose $D_u \in \R^{n_1+n_2}, D_v \in \R^{m_1+m_2}$ are compact sets, and $Q: D_u\times D_v \to \R$ is continuous. Let $G=(G_{ij})_{i \in [n_1],j\in[m_1]}$ with $G_{ij}$'s i.i.d. $\mathcal{N}(0,1)$, and $g \sim \mathcal{N}(0,I_{n_1})$, $h \sim \mathcal{N}(0,I_{m_1})$ be independent Gaussian vectors. For $u \in \R^{n_1+n_2}, v \in \R^{m_1+m_2}$, write $u_1\equiv u_{[n_1]}\in \R^{n_1}, v_1\equiv v_{[m_1]} \in \R^{m_1}$. Define
	\begin{align*}
	\Phi^{\textrm{p}} (G)& = \max_{u \in D_u}\min_{v \in D_v} \Big( u_1^\top G v_1 + Q(u,v)\Big), \nonumber\\
	\Phi^{\textrm{a}}(g,h)& = \max_{u \in D_u}\min_{v \in D_v} \Big(\pnorm{v_1}{} g^\top u_1 + \pnorm{u_1}{} h^\top v_1+ Q(u,v)\Big).
	\end{align*}
	Then the following hold.
	\begin{enumerate}
		\item For all $t \in \R$, 
		\begin{align*}
		\Prob\big(\Phi^{\textrm{p}} (G)\geq t\big)\leq 2 \Prob\big(\Phi^{\textrm{a}}(g,h)\geq t\big).
		\end{align*}
		\item If $(u,v)\mapsto u_1^\top G v_1+ Q(u,v)$ satisfies the conditions of Sion's min-max theorem for  the pair $(D_u,D_v)$ a.s. (for instance, $D_u,D_v$ are convex, and $Q$ is concave-convex), then for any $t \in \R$,
		\begin{align*}
		\Prob\big(\Phi^{\textrm{p}} (G)\leq t\big)\leq 2 \Prob\big(\Phi^{\textrm{a}}(g,h)\leq t\big).
		\end{align*}
	\end{enumerate}
\end{theorem}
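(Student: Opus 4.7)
My plan is to derive Theorem \ref{thm:CGMT} from Gordon's classical min-max comparison inequality for Gaussian processes, following the strategy carried out in \cite{thrampoulidis2018precise,miolane2021distribution}. I would introduce two centered Gaussian fields indexed by $(u,v)\in D_u\times D_v$: a \emph{primary} field
\[
X_{u,v}\equiv u_1^\top G v_1 + \pnorm{u_1}{}\pnorm{v_1}{}\gamma
\]
with $\gamma\sim\mathcal{N}(0,1)$ independent of $G,g,h$ introduced to equalize variances, and an \emph{auxiliary} field
\[
Y_{u,v}\equiv \pnorm{v_1}{} g^\top u_1 + \pnorm{u_1}{} h^\top v_1.
\]
A direct covariance computation gives $\E X_{u,v}^2=\E Y_{u,v}^2=2\pnorm{u_1}{}^2\pnorm{v_1}{}^2$; equal increment variances on each fiber $\{u\}\times D_v$; and, for $u\neq u'$,
\[
\E(Y_{u,v}-Y_{u',v'})^2 - \E(X_{u,v}-X_{u',v'})^2 = 2\bigl(\pnorm{v_1}{}\pnorm{v_1'}{}-\iprod{v_1}{v_1'}\bigr)\bigl(\pnorm{u_1}{}\pnorm{u_1'}{}-\iprod{u_1}{u_1'}\bigr)\geq 0
\]
by two applications of Cauchy--Schwarz. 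These are exactly Gordon's hypotheses with $Y$ in the role of the dominating process.

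For part (1), Gordon's min-max inequality --- applied first on finite nets of $D_u\times D_v$ and then extended to the compact setting by continuity of $Q$ together with standard Gaussian concentration --- yields
\[
\Prob\Bigl(\max_{u\in D_u}\min_{v\in D_v}\bigl[X_{u,v}+Q(u,v)\bigr]\geq t\Bigr)\leq \Prob\bigl(\Phi^{\textrm{a}}(g,h)\geq t\bigr).
\]
To eliminate the $\gamma$ term and recover $\Phi^{\textrm{p}}(G)$ on the left-hand side, I would condition on $\{\gamma\geq 0\}$: by independence $\Prob(\gamma\geq 0)=1/2$, and on that event $\min_v[u_1^\top G v_1 + \pnorm{u_1}{}\pnorm{v_1}{}\gamma + Q(u,v)]\geq \min_v[u_1^\top G v_1 + Q(u,v)]$ for each fixed $u$, since adding a nonnegative term only raises a minimum. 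Taking the max over $u$ and using independence of $G$ and $\gamma$ then delivers $\Prob(\Phi^{\textrm{p}}(G)\geq t)\leq 2\,\Prob(\Phi^{\textrm{a}}(g,h)\geq t)$.

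For part (2), I would exploit the concave-convex structure of $(u,v)\mapsto u_1^\top G v_1+Q(u,v)$: by Sion's minimax theorem, $\Phi^{\textrm{p}}(G)=\min_{v\in D_v}\max_{u\in D_u}[u_1^\top G v_1+Q(u,v)]$ almost surely. A Gordon-type comparison in this dual orientation (with $v$ as the outer index, where the same covariance calculations still produce valid hypotheses after swapping the roles of $X$ and $Y$) gives $\Prob(\min_v\max_u[X+Q]\leq t)\leq \Prob(\min_v\max_u[Y+Q]\leq t)$. Conditioning this time on $\{\gamma\leq 0\}$ and using the analogous pointwise inequality $\min_v\max_u[X+Q]\leq \Phi^{\textrm{p}}(G)$ on that event yields $\Prob(\Phi^{\textrm{p}}(G)\leq t)\leq 2\,\Prob(\min_v\max_u[Y+Q]\leq t)$; the weak-duality bound $\min_v\max_u[Y+Q]\geq \max_u\min_v[Y+Q]=\Phi^{\textrm{a}}(g,h)$ then completes the chain to give $\Prob(\Phi^{\textrm{p}}(G)\leq t)\leq 2\,\Prob(\Phi^{\textrm{a}}(g,h)\leq t)$.

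The main obstacle I anticipate is the technical passage from Gordon's inequality on finite nets to the compact setting with continuous $Q$, and verifying that the two symmetrization steps each cost only the stated factor of $2$ rather than a worse constant. Both ingredients are standard but delicate; rather than re-derive them, I would follow the treatment in \cite{miolane2021distribution}.
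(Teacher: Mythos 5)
The paper does not prove this theorem — it imports it verbatim from \cite[Corollary G.1]{miolane2021distribution} — and your sketch reconstructs exactly the standard argument behind that result: the Gordon comparison between $u_1^\top G v_1+\pnorm{u_1}{}\pnorm{v_1}{}\gamma$ and the auxiliary process (your covariance identity and the equality on fibers $\{u\}\times D_v$ are correct), the factor of $2$ from conditioning on the sign of $\gamma$, and Sion plus weak duality for the convex direction. This is essentially the same proof as the cited source; the only points you leave implicit (finite-net to compact passage, strict versus non-strict inequalities in the tail bounds) are routine and you correctly flag them.
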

A significant line of recent work explores Theorem \ref{thm:CGMT} in the context of characterizing the $\ell_2$-type errors or distributions of various statistical estimators~\cite{thrampoulidis2015recovering,thrampoulidis2018precise,miolane2021distribution,celentano2023lasso,montanari2023generalization,han2023noisy,han2023universality,han2023distribution}. The main idea underlying this line of work lies in providing exact characterizations for the cost optimum of the associated min–max optimization problems involving $G$, and then linking the statistical estimators to the precise value of this cost optimum by quantifying the suboptimality gap.

In our setting, while the comparison inequality is known to yield one (first) direction of~(\ref{eqn:ALMT}) in several special cases~\cite{rudelson2008sparse,stojnic2009various,oymak2010new,chandrasekaran2012convex}, its utility in producing the second direction of the phase transition remains unclear. It is now well understood that such a difficulty can be circumvented for randomly rotated cones by the proof method developed in~\cite{amelunxen2014living}, as detailed above.

To establish a precise link between the phase transition behavior of randomly projected cones and the objective of the associated min–max optimization problem, the key technical point that makes our random process method fully effective in the general formulation of Theorem~\ref{thm:approx_kinematics} is to work with \emph{sufficiently tight upper and lower bounds, instead of exact characterizations}, for the cost optimum of two different random optimization problems that certify the two directions of the same phase transition point. Conceptually, this is viable because we are primarily interested in the nontriviality of the objective function rather than its exact value.



Specifically, we exploit the Gaussian comparison principle in its full strength, to produce optimal probabilistic upper and lower estimates for two different representation of the \emph{support function} of suitable compact versions of $L\cap G K$. Recall for a generic closed convex set $K_0\subset \R^n$, its support function is defined as
\begin{align}\label{def:support_fcn}
\mathsf{h}_{K_0}(x)\equiv \max_{\mu \in K_0}\iprod{x}{\mu},\quad x \in \R^n.
\end{align}
Basic properties of the support function can be found in, e.g., \cite[Chapter 1]{schneider2014convex}. With $B_n,\partial B_n$ denoting the unit ball and sphere in $\R^n$, and $K_n\equiv K\cap B_n$, the (uniform) upper estimate takes the form
\begin{align}\label{ineq:upper_estimate_supp_fcn}
\sup_{x \in L\cap \partial B_m}\mathsf{h}_{L\cap G K_n}(x)&=\sup_{ \substack{x \in L\cap \partial B_m,\\ v \in L, \mu \in K_n}} \inf_{w \in \R^m} \Big\{\iprod{x}{v}+\iprod{w}{G\mu-v}\Big\}\nonumber\\
&\stackrel{\Prob}{\leq} \sup_{ \substack{x \in L\cap \partial B_m,\\ \mu \in K_n} } \sup_{\substack{v\in L,\\ \iprod{g}{\mu}\geq \pnorm{\,\pnorm{\mu}{}h-v}{}}
}\iprod{x}{v}.
\end{align}
The lower estimate, which holds for individual $x \in L\cap \partial B_m$'s, takes a different form: there exists some universal constant $c>0$, 
\begin{align}\label{ineq:lower_estimate_supp_fcn}
\mathsf{h}_{L\cap G K_n}(x)&=\sup_{\mu \in K_n}\inf_{v \in L^\circ} \Big\{ \iprod{x}{G\mu}-\iprod{v}{G\mu} \Big\}\nonumber\\
&\stackrel{\Prob}{\geq} c\cdot \bigg\{\bigg(\sup_{\mu \in K\cap B_n}\iprod{g}{\mu}\bigg)^2-\bigg(\sup_{v \in L^\circ\cap B_m} \iprod{h}{v}\bigg)^2\bigg\}_+^{1/2}.
\end{align}
See the proofs of Propositions \ref{prop:support_fcn_proj_upper} and \ref{prop:support_fcn_proj_lower} for precise versions of (\ref{ineq:upper_estimate_supp_fcn})-(\ref{ineq:lower_estimate_supp_fcn}). The relevance of the estimates (\ref{ineq:upper_estimate_supp_fcn}) and (\ref{ineq:lower_estimate_supp_fcn}) to Theorem \ref{thm:approx_kinematics} can now be easily seen:
\begin{itemize}
	\item If $\delta(K)+\delta(L)\ll m$, then with high probability the feasible set for the supremum over $v$ in (\ref{ineq:upper_estimate_supp_fcn}) is $\{0\}$, and therefore $\sup_{x \in L\cap \partial B_m}\mathsf{h}_{L\cap GK_n}(x)=0$ with high probability. This is equivalent to $L\cap GK=\{0\}$. 
	\item If $\delta(K)+\delta(L)\gg m$, then for each $x \in L\cap \partial B_m$, with high probability $\mathsf{h}_{L\cap GK_n}(x)>0$, which implies $L\cap GK\neq \{0\}$. 
\end{itemize}
As mentioned above, the utility of the comparison principle in proving the first direction (\ref{ineq:upper_estimate_supp_fcn}) has been recognized in \cite{rudelson2008sparse,stojnic2009various,oymak2010new,chandrasekaran2012convex} in several special cases of interest in applications, and only requires the (almost) original form of Gordon's comparison inequality \cite{gordon1985some,gordon1988milman}. Here we work out the most general form in this direction. The major contribution of our technical development therefore rests in proving the more difficult, second direction (\ref{ineq:lower_estimate_supp_fcn}). Indeed, as will be evident in the proof of Proposition \ref{prop:support_fcn_proj_lower}, the proof of (\ref{ineq:lower_estimate_supp_fcn}) is much more involved that the `easier direction' (\ref{ineq:upper_estimate_supp_fcn}). 

\subsection{Some basic convex geometry}

For any closed convex cone $K\subset\R^n$, its polar cone is defined as
\begin{align}\label{def:polar_cone}
K^\circ\equiv \left\{v\in\R^n: \iprod{v}{\mu}\leq 0, \text{ for all }\mu\in K\right\}.
\end{align}
The following lemma provides a useful characterization of a closed convex cone $K$ via its polar cone $K^\circ$.
\begin{lemma}\label{lem:cone_polarity}
	Let $K$ be a closed convex cone. The following hold:
	\begin{align*}
	&\mu \in K \Leftrightarrow \sup_{v \in K^\circ} \iprod{v}{\mu}=0\Leftrightarrow \inf_{v \in -K^\circ} \iprod{v}{\mu}=0,\nonumber\\
	& \mu \notin K \Leftrightarrow \sup_{v \in K^\circ} \iprod{v}{\mu}=\infty\Leftrightarrow \inf_{v \in -K^\circ} \iprod{v}{\mu}=-\infty.
	\end{align*}
\end{lemma}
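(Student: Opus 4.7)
The plan is to reduce the four equivalences to two, and then dispatch the remaining ones using separation together with the homogeneity of $K^\circ$. First I observe that the map $v\mapsto -v$ is a bijection from $K^\circ$ onto $-K^\circ$ and satisfies $\iprod{-v}{\mu}=-\iprod{v}{\mu}$, so that
\begin{align*}
\sup_{v\in K^\circ}\iprod{v}{\mu}=-\inf_{v\in -K^\circ}\iprod{v}{\mu}.
\end{align*}
Consequently the $\sup$-version and the $\inf$-version of each claim are trivially equivalent, and it suffices to establish the two equivalences involving $S(\mu)\equiv\sup_{v\in K^\circ}\iprod{v}{\mu}$.

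Next I record a useful dichotomy: since $K^\circ$ is itself a cone containing $0$, we always have $S(\mu)\geq 0$; moreover, if any $v_0\in K^\circ$ satisfies $\iprod{v_0}{\mu}>0$, then $tv_0\in K^\circ$ for all $t>0$ forces $S(\mu)=+\infty$. Hence $S(\mu)\in\{0,+\infty\}$, and the two claims of the lemma collapse into the single bi-implication $\mu\in K\Leftrightarrow S(\mu)=0$.

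The forward direction $\mu\in K\Rightarrow S(\mu)=0$ is immediate from the definition (\ref{def:polar_cone}): every $v\in K^\circ$ satisfies $\iprod{v}{\mu}\leq 0$, so $S(\mu)\leq 0$, which combined with $S(\mu)\geq 0$ gives $S(\mu)=0$. The converse I would prove by contrapositive. Assuming $\mu\notin K$, the Hahn-Banach separation theorem applied to the closed convex set $K$ and the point $\mu$ produces $v_0\in\R^n$ with $\iprod{v_0}{\mu}>\sup_{\eta\in K}\iprod{v_0}{\eta}$; since $\iprod{v_0}{\mu}<\infty$ and the cone property of $K$ forces $\sup_{\eta\in K}\iprod{v_0}{\eta}\in\{0,+\infty\}$ (via rescaling $\eta\mapsto t\eta$), the right-hand supremum must equal $0$, i.e., $v_0\in K^\circ$ with $\iprod{v_0}{\mu}>0$. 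The dichotomy of the preceding paragraph then yields $S(\mu)=+\infty\neq 0$. The only nontrivial ingredient in this argument is the Hahn-Banach separation of the closed convex set $K$ from the point $\mu\notin K$, which is entirely standard, so no serious obstacle is anticipated.
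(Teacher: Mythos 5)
Your proposal is correct. The overall skeleton matches the paper's: both arguments note that $\sup_{v\in K^\circ}\iprod{v}{\mu}\in\{0,\infty\}$ because $K^\circ$ is a cone containing $0$, and both get the forward implication $\mu\in K\Rightarrow \sup_{v\in K^\circ}\iprod{v}{\mu}=0$ directly from the definition of the polar cone. Where you diverge is in the converse: the paper argues that finiteness of the supremum forces $\iprod{v}{\mu}\leq 0$ for all $v\in K^\circ$, hence $\mu\in(K^\circ)^\circ=K$, citing the bipolar theorem from Rockafellar as a black box. You instead prove the contrapositive from scratch: Hahn--Banach separation of $\mu\notin K$ from the closed convex cone $K$ yields a functional $v_0$ whose supremum over $K$ must be $0$ by the cone dichotomy, so $v_0\in K^\circ$ with $\iprod{v_0}{\mu}>0$, and rescaling gives $S(\mu)=\infty$. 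Since the bipolar theorem is itself proved by exactly this separation argument, the two routes are morally equivalent, but yours is self-contained where the paper's relies on a citation; you also make explicit the reduction of the $\inf$-over-$(-K^\circ)$ statements to the $\sup$-over-$K^\circ$ ones via $v\mapsto -v$, which the paper leaves implicit. No gaps.
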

\begin{proof}

Suppose $\mu \in K$. By definition of $K^\circ$, $\sup_{v \in K^\circ}\iprod{v}{\mu}\leq 0$ with equality achieved for $v=0$. This proves the direction $\mu \in K\Rightarrow \sup_{v \in K^\circ} \iprod{v}{\mu}=0$. 

For the other direction, as for any $\mu \in \R^n$, $\sup_{v \in K^\circ} \iprod{v}{\mu} \in \{0,\infty\}$, it remains to prove that $\mu \notin K \Rightarrow \sup_{v \in K^\circ} \iprod{v}{\mu}  = \infty$. To see this, suppose $\mu \in K$ is such that $\sup_{v \in K^\circ} \iprod{v}{\mu}$ is finite. This means $\sup_{v \in K^\circ} \iprod{v}{\mu}=0$, or equivalently, $\iprod{v}{\mu}\leq 0$ for all $v \in K^\circ$. Consequently, $\mu \in (K^\circ)^\circ =K$, where the last identity follows by e.g. \cite[Theorem 14.1]{rockafellar1997convex}.
\end{proof}

Another important property of the polar cone is the following orthogonal decomposition known as Moreau's theorem \cite[Theorem 31.5]{rockafellar1997convex}.
\begin{lemma}\label{lem:moreau_thm}
For any $v\in\R^n$, we have the orthogonal decomposition \begin{align}
v = \Pi_K(v) + \Pi_{K^\circ}(v) \, \hbox{ with }\, \iprod{\Pi_K(v)}{\Pi_{K^\circ}(v)} = 0.
\end{align}
\end{lemma}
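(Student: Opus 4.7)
My plan is to prove the decomposition directly from the variational characterization of the metric projection onto a closed convex cone, without appealing to further tools from convex analysis.

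I would first establish two variational properties of $p := \Pi_K(v)$ that exploit the cone structure of $K$:
(a) $\iprod{v-p}{p}=0$, and
(b) $\iprod{v-p}{\mu}\leq 0$ for every $\mu \in K$.
Property (a) follows by noting that since $K$ is a cone, $tp \in K$ for all $t\geq 0$, so the map $t \mapsto \pnorm{v-tp}{}^2$ is minimized over $[0,\infty)$ at $t=1$; stationarity at this interior point of $[0,\infty)$ yields the orthogonality. Property (b) follows by considering perturbations $p + \epsilon \mu \in K$ for $\mu \in K$ and $\epsilon>0$, expanding the inequality $\pnorm{v-p-\epsilon\mu}{}^2 \geq \pnorm{v-p}{}^2$, dividing by $\epsilon$, and letting $\epsilon \downarrow 0$.

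Setting $w:= v-p$, property (b) is exactly the defining inclusion $w \in K^\circ$, while property (a) gives the orthogonality $\iprod{p}{w}=0$. The remaining task is to identify $w$ with $\Pi_{K^\circ}(v)$. For any competitor $w' \in K^\circ$, I would expand
\[
\pnorm{v-w'}{}^2 = \pnorm{p + (w-w')}{}^2 = \pnorm{p}{}^2 - 2\iprod{p}{w'} + \pnorm{w-w'}{}^2,
\]
using $\iprod{p}{w}=0$. Since $p \in K$ and $w' \in K^\circ$, the cross term $-2\iprod{p}{w'}$ is nonnegative, so the right side is at least $\pnorm{p}{}^2 = \pnorm{v-w}{}^2$, with equality iff $w'=w$. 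Hence $w = \Pi_{K^\circ}(v)$, and combining this with $v = p + w = \Pi_K(v) + \Pi_{K^\circ}(v)$ and $\iprod{p}{w}=0$ yields the claim.

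The main subtlety I anticipate is entirely contained in property (a): the orthogonality $\iprod{v-p}{p}=0$ relies in an essential way on $K$ being a \emph{cone} rather than a general closed convex set, where only a one-sided variational inequality would be available. Once (a) and (b) are in place, the remaining identification of $w$ with $\Pi_{K^\circ}(v)$ is a Pythagorean-style computation that does not use convexity of $K$ beyond its role in defining $\Pi_K$ in the first place.
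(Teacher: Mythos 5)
Your proof is correct. Note that the paper does not actually prove this lemma at all: it simply cites \cite[Theorem 31.5]{rockafellar1997convex} for Moreau's theorem, so there is no in-paper argument to compare against. What you supply is a complete, self-contained first-principles derivation, and each step checks out: the stationarity argument for $\iprod{v-p}{p}=0$ is valid because $t\mapsto\pnorm{v-tp}{}^2$ is a differentiable quadratic minimized over $[0,\infty)$ at the interior point $t=1$ (and the identity is trivial when $p=0$); the perturbation $p+\epsilon\mu$ stays in $K$ because a convex cone is closed under addition, which gives $v-p\in K^\circ$; and the Pythagorean computation correctly identifies $w=v-p$ as $\Pi_{K^\circ}(v)$, since the cross term $-2\iprod{p}{w'}$ is nonnegative for $w'\in K^\circ$ and vanishes at $w'=w$. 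Your closing remark is also on target: the two-sided orthogonality in (a), as opposed to the one-sided variational inequality available for general closed convex sets, is exactly where the cone structure enters. The only stylistic point is that your uniqueness claim ("with equality iff $w'=w$") needs both $\pnorm{w-w'}{}=0$ and $\iprod{p}{w'}=0$ to hold simultaneously, but the forward implication $\pnorm{w-w'}{}=0\Rightarrow w'=w$ already suffices to conclude $w=\Pi_{K^\circ}(v)$, so nothing is lost.
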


The notion of statistical dimension defined in (\ref{def:stat_dim}) is intrinsically related to the so-called Gaussian width. A formal definition is given as follows. For a compact convex set $K_0\subset \R^n$, we define its \emph{Gaussian width} $\gw(K_0)$ by
\begin{align}\label{def:Gauss_width}
\gw(K_0)\equiv \E \sup_{\mu \in K_0}\iprod{g}{\mu},\quad g\sim \mathcal{N}(0,I_n).
\end{align}

We shall frequently use the following properties of the statistical dimension (\ref{def:stat_dim}) and Gaussian width (\ref{def:Gauss_width}) in the proofs ahead.
\begin{proposition}\label{prop:stat_dim}
Let $K\subset \R^n$ be a closed convex cone.
\begin{enumerate}
	\item $\delta(K)=\E\big(\sup_{\mu \in K\cap B_n}\iprod{g}{\mu}\big)^2$, where $g\sim \mathcal{N}(0,I_n)$.
	\item $\delta(K)+\delta(K^\circ)=n$. 
	\item For nontrivial $K\neq \{0\}$, $\gw(K\cap \partial B_n) \leq \gw(K\cap B_n)\leq \gw(K\cap \partial B_n)+1$. 
	\item $\gw(K\cap B_n)\leq \sqrt{\delta(K)}\leq \gw(K\cap B_n)+1 $.
\end{enumerate}
\end{proposition}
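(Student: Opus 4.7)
The plan is to handle the four items in turn, with Parts (1) and (2) following quickly from Moreau's orthogonal decomposition (Lemma \ref{lem:moreau_thm}), and Parts (3) and (4) following from Part (1) combined with elementary Gaussian estimates. The main tool throughout is the pointwise identity
\[
\sup_{\mu \in K \cap B_n} \iprod{g}{\mu} = \pnorm{\Pi_K(g)}{},
\]
which I would establish first: writing $g = \Pi_K(g) + \Pi_{K^\circ}(g)$, for any $\mu \in K \cap B_n$ one has $\iprod{g}{\mu} \leq \iprod{\Pi_K(g)}{\mu} \leq \pnorm{\Pi_K(g)}{}$ (the first step using $\iprod{\Pi_{K^\circ}(g)}{\mu} \leq 0$, the second Cauchy--Schwarz with $\pnorm{\mu}{}\leq 1$), with equality attained at $\mu = \Pi_K(g)/\pnorm{\Pi_K(g)}{}\in K\cap \partial B_n$ when $\Pi_K(g)\neq 0$ and trivially at $\mu=0$ otherwise. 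Squaring and taking expectation yields Part (1). Part (2) is then immediate: the orthogonality in Moreau's decomposition gives $\pnorm{g}{}^2 = \pnorm{\Pi_K(g)}{}^2 + \pnorm{\Pi_{K^\circ}(g)}{}^2$, and taking expectation gives $n = \delta(K) + \delta(K^\circ)$.

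For Part (3), let $Y \equiv \sup_{v \in K \cap \partial B_n} \iprod{g}{v}$. Since $\partial B_n \subset B_n$, the lower bound $\gw(K \cap \partial B_n) \leq \gw(K \cap B_n)$ is immediate. For the reverse direction, the identity established above gives $\sup_{\mu \in K \cap B_n} \iprod{g}{\mu} = \pnorm{\Pi_K(g)}{} = Y_+$ (the case analysis in Part (1) shows $Y_+ = \pnorm{\Pi_K(g)}{}$ pointwise), hence
\[
\gw(K \cap B_n) - \gw(K \cap \partial B_n) = \E Y_+ - \E Y = \E Y_-.
\]
Using nontriviality of $K$, I would fix any $v_0 \in K \cap \partial B_n$; then the pointwise bound $Y \geq \iprod{g}{v_0}$ gives $Y_- \leq (-\iprod{g}{v_0})_+$, and therefore $\E Y_- \leq 1/\sqrt{2\pi} < 1$.

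For Part (4), the lower bound $\gw(K \cap B_n) \leq \sqrt{\delta(K)}$ follows from Part (1) and Jensen's inequality applied to the concave square root. The upper bound uses that $g \mapsto \pnorm{\Pi_K(g)}{}$ is $1$-Lipschitz (as the composition of the $1$-Lipschitz metric projection onto a closed convex set with the $1$-Lipschitz Euclidean norm), so Gaussian Poincar\'e yields $\var(\pnorm{\Pi_K(g)}{}) \leq 1$; then $\delta(K) = (\E \pnorm{\Pi_K(g)}{})^2 + \var(\pnorm{\Pi_K(g)}{}) \leq (\gw(K \cap B_n))^2 + 1 \leq (\gw(K \cap B_n) + 1)^2$, and taking square roots finishes. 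I do not anticipate any substantial obstacle here; the only mild subtleties are handling the edge case $\Pi_K(g) = 0$ in Part (1) and invoking the nontriviality $K \neq \{0\}$ in Part (3) to ensure that $K \cap \partial B_n$ is nonempty so that a base point $v_0$ can be chosen.
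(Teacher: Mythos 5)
Your proposal is correct, and it overlaps with the paper's proof in overall structure while diverging in two places worth noting. For Parts (1)--(2) the paper simply cites \cite[Proposition 3.1]{amelunxen2014living}; your self-contained derivation via Moreau's decomposition (Lemma \ref{lem:moreau_thm}), resting on the identity $\sup_{\mu\in K\cap B_n}\iprod{g}{\mu}=\pnorm{\Pi_K(g)}{}$, is exactly the standard argument behind that citation (the paper itself invokes this identity later, in the proof of Proposition \ref{prop:support_fcn_proj_upper}). For Part (3) the two routes genuinely differ: the paper first shows $\sup_{\mu\in K\cap B_n}\iprod{g}{\mu}=0\vee Y$ with $Y\equiv\sup_{\mu\in K\cap\partial B_n}\iprod{g}{\mu}$ using cone homogeneity, then bounds $\E[0\vee Y]\leq \E^{1/2}Y^2\leq\sqrt{\gw^2(K\cap\partial B_n)+1}\leq \gw(K\cap\partial B_n)+1$ via the Gaussian--Poincar\'e inequality; you instead write the gap exactly as $\E Y_-$ and dominate it pointwise by $(-\iprod{g}{v_0})_+$ for a fixed unit vector $v_0\in K\cap\partial B_n$, giving $\E Y_-\leq 1/\sqrt{2\pi}$. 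Your argument is more elementary (no concentration input) and yields a sharper constant than the stated bound of $1$. For Part (4), where the paper omits details, your variance decomposition $\delta(K)=(\E\pnorm{\Pi_K(g)}{})^2+\var(\pnorm{\Pi_K(g)}{})$ combined with the $1$-Lipschitz property of $g\mapsto\pnorm{\Pi_K(g)}{}$ and Gaussian--Poincar\'e is precisely the argument the paper intends. All edge cases ($\Pi_K(g)=0$ in Part (1), nonemptiness of $K\cap\partial B_n$ in Part (3)) are handled correctly.
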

\begin{proof}
\noindent (1)-(2). The claims follow from \cite[Proposition 3.1]{amelunxen2014living}.

\noindent (3). Note that 
\begin{align*}
\sup_{\mu \in K\cap \partial B_n}\iprod{g}{\mu}&\leq \sup_{\mu \in K\cap B_n}\iprod{g}{\mu} = \sup_{0\leq r\leq 1} \sup_{\mu \in K, \pnorm{\mu}{}=r}\iprod{g}{\mu} \\
&= \sup_{0\leq r\leq 1} \bigg(r\cdot \sup_{\mu \in K\cap \partial B_n}\iprod{g}{\mu}\bigg) = 0 \vee \sup_{\mu \in K\cap \partial B_n}\iprod{g}{\mu}. 
\end{align*}
On the other hand, using the $1$-Lipschitz property of $g\mapsto \sup_{\mu \in K\cap \partial B_n}\iprod{g}{\mu}$, we obtain by Gaussian-Poincar\'e inequality that
\begin{align*}
\var\bigg(\sup_{\mu \in K\cap \partial B_n}\iprod{g}{\mu}\bigg)= \E\bigg(\sup_{\mu \in K\cap \partial B_n}\iprod{g}{\mu}\bigg)^2-\gw^2(K\cap \partial B_n)\leq 1. 
\end{align*}
Consequently, using the above two inequalities and the fact that $\gw(K\cap \partial B_n)\geq 0$,
\begin{align*}
\gw(K\cap \partial B_n)&\leq \gw(K\cap B_n)\leq \E \bigg[0 \vee \sup_{\mu \in K\cap \partial B_n}\iprod{g}{\mu}\bigg]\\
&\leq \E^{1/2}\bigg(\sup_{\mu \in K\cap \partial B_n}\iprod{g}{\mu}\bigg)^2 \leq \gw(K\cap \partial B_n)+1. 
\end{align*}
\noindent (4). We may conclude the claim by using the representation in (1) and Gaussian-Poincar\'e inequality similarly as in the proof of (2). Details are omitted. 
\end{proof}

\subsection{Further Gaussian process tools}


The following Gaussian concentration inequality will be used frequently; its statement is taken from e.g., \cite[Section 1.1]{ledoux2013probability}, or \cite[Theorem 2.2.6]{gine2015mathematical}.
\begin{theorem}\label{thm:gaussian_concentration}
Let $g\sim \mathcal{N}(0,I_n)$ and $F:\R^n\to \R$ be a $1$-Lipschitz map. Then 
\begin{align*}
\Prob\big(\abs{F(g)-\mathrm{Med}(F(g))}\geq t\big)\leq 2e^{-t^2/2},\quad t\geq 0.
\end{align*}
Here $\mathrm{Med}(F(g))$ denotes the median of $F(g)$. 
\end{theorem}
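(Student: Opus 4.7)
The plan is to derive this classical concentration inequality from the Gaussian isoperimetric inequality of Borell and Sudakov--Tsirelson, which asserts that among all Borel sets $A\subset\R^n$ with prescribed Gaussian measure $\gamma_n(A)=\Phi(a)$, the $t$-enlargement $A_t\equiv A+tB_n$ has minimal Gaussian measure precisely when $A$ is a half-space, i.e., $\gamma_n(A_t)\geq \Phi(a+t)$ for every $t\geq 0$. This is the nontrivial ingredient I would invoke as a black box.

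First, I would apply this isoperimetric inequality to the sublevel set $A\equiv \{x\in\R^n: F(x)\leq \mathrm{Med}(F(g))\}$. By definition of the median, $\gamma_n(A)\geq 1/2$, so the corresponding half-space parameter satisfies $a\geq 0$, and isoperimetry yields $\gamma_n(A_t)\geq \Phi(t)$ for all $t\geq 0$. Next, I would exploit the $1$-Lipschitz assumption on $F$: for any $x\in A$ and any $y\in \R^n$ with $\pnorm{x-y}{}\leq t$, one has $F(y)\leq F(x)+t\leq \mathrm{Med}(F(g))+t$, so the inclusion $A_t\subset \{F\leq \mathrm{Med}(F(g))+t\}$ holds. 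Combining with the isoperimetric bound and the standard Mills-ratio estimate $1-\Phi(t)\leq e^{-t^2/2}$ for $t\geq 0$ gives
\begin{align*}
\Prob\big(F(g)>\mathrm{Med}(F(g))+t\big)\leq 1-\gamma_n(A_t)\leq 1-\Phi(t)\leq e^{-t^2/2}.
\end{align*}

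For the lower tail, I would repeat the argument verbatim with $F$ replaced by $-F$, which is also $1$-Lipschitz and has median $-\mathrm{Med}(F(g))$, producing $\Prob(F(g)<\mathrm{Med}(F(g))-t)\leq e^{-t^2/2}$. A union bound over the two one-sided deviations then yields the claimed two-sided tail bound $2e^{-t^2/2}$.

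The main obstacle is really invoking the Gaussian isoperimetric inequality itself, since the rest of the argument is mechanical. An alternative route, which avoids isoperimetry entirely, is to combine Gross's Gaussian logarithmic Sobolev inequality with Herbst's entropy argument to obtain sub-Gaussian concentration of $F(g)$ around its \emph{mean} $\E F(g)$ with the sharp constant $1/2$; one would then separately compare $\E F(g)$ and $\mathrm{Med}(F(g))$ by integrating the resulting tail bound, at the cost of a slightly worse numerical constant. Either route delivers the stated inequality, and the isoperimetric approach is the cleanest and sharpest.
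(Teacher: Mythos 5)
Your proof is correct: the paper does not prove this statement itself but cites it as classical (it points to \cite[Section 1.1]{ledoux2013probability} and \cite[Theorem 2.2.6]{gine2015mathematical}), and your isoperimetric argument is precisely the standard proof given in those references, with the sharp form $2(1-\Phi(t))$ being even stronger than the stated $2e^{-t^2/2}$. The only point worth a word is that when the median is non-unique you should justify that $-\mathrm{Med}(F(g))$ is a valid median of $-F(g)$ --- which follows from $\Prob\big(F(g)\geq \mathrm{Med}(F(g))\big)\geq 1/2$ --- or, more directly, run the same isoperimetric step on the superlevel set $\{F\geq \mathrm{Med}(F(g))\}$.
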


We will mostly use the following form of the Gaussian concentration inequality in the proofs below.
\begin{proposition}\label{prop:gaussian_conc_generic}
There exists some universal constant $C>0$ such that for any compact set $K_0\subset \R^n$ and all $t\geq 1$, with probability at least $1-e^{-t}$,
\begin{align*}
\bigabs{\sup_{\mu \in K_0}\iprod{g}{\mu}- \E \sup_{\mu \in K_0}\iprod{g}{\mu}  }\leq C \cdot\sup_{\mu \in K_0}\pnorm{\mu}{}\cdot  \sqrt{t}.
\end{align*}
\end{proposition}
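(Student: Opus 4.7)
The plan is to reduce this to the standard Gaussian concentration inequality stated in Theorem \ref{thm:gaussian_concentration} by verifying the Lipschitz constant of the relevant functional. Let $F(g)\equiv \sup_{\mu \in K_0}\iprod{g}{\mu}$ and $R\equiv \sup_{\mu\in K_0}\pnorm{\mu}{}$. The first step is to check that $F$ is $R$-Lipschitz: for any $g_1,g_2\in\R^n$, one has
\[
F(g_1)-F(g_2)\leq \sup_{\mu\in K_0}\iprod{g_1-g_2}{\mu}\leq R\pnorm{g_1-g_2}{},
\]
and a symmetric argument gives the reversed bound. Compactness of $K_0$ ensures $R<\infty$ and that the supremum in $F$ is attained.

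Next, I would apply Theorem \ref{thm:gaussian_concentration} to the $1$-Lipschitz map $F/R$ to obtain
\[
\Prob\bigl(\abs{F(g)-\mathrm{Med}(F(g))}\geq s\bigr)\leq 2 e^{-s^2/(2R^2)},\quad s\geq 0.
\]
To pass from median to expectation, I would integrate this tail bound: writing $\Delta\equiv F(g)-\mathrm{Med}(F(g))$,
\[
\abs{\E F(g)-\mathrm{Med}(F(g))}\leq \E \abs{\Delta}=\int_0^\infty \Prob(\abs{\Delta}\geq s)\,\d{s}\leq \int_0^\infty 2e^{-s^2/(2R^2)}\d{s}=C_0 R
\]
for some universal $C_0>0$. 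Combining the two displays by the triangle inequality yields
\[
\Prob\bigl(\abs{F(g)-\E F(g)}\geq s+C_0 R\bigr)\leq 2e^{-s^2/(2R^2)}.
\]

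Finally, setting $s\equiv R\sqrt{2(t+\log 2)}$ forces the right-hand side to be at most $e^{-t}$, and since $t\geq 1$ we may bound $s+C_0 R\leq C R\sqrt{t}$ for a sufficiently large universal constant $C>0$, which completes the proof. There is no real obstacle here — the entire argument is a textbook application of Gaussian concentration to a supremum functional; the only minor subtlety is handling the median-to-mean conversion, which is routinely absorbed into the universal constant thanks to the assumption $t\geq 1$.
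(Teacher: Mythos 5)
Your proposal is correct and follows essentially the same route as the paper's proof: normalize by the Lipschitz constant $R=\sup_{\mu\in K_0}\pnorm{\mu}{}$ (the paper rescales $K_0$ into $B_n$, which is equivalent), apply the median-form Gaussian concentration inequality, convert median to mean by integrating the tail, and absorb the additive constant using $t\geq 1$. No gaps.
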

\begin{proof}
By rescaling, we may assume without loss of generality that $K_0\subset B_n$. Let $F(g)\equiv \sup_{\mu \in K_0}\iprod{g}{\mu}$. It is easy to see that $\abs{F(g)-F(g')}\leq \pnorm{g-g'}{}$ holds for any pair $g,g' \in \R^n$, so $F$ is $1$-Lipschitz. Using the Gaussian concentration inequality in Theorem \ref{thm:gaussian_concentration}, for any $t>0$, with probability at least $1-2e^{-t/2}$ we have 
\begin{align*}
\abs{F(g)-\mathrm{Med}(F(g))}\leq \sqrt{t}. 
\end{align*}
Integrating the tail, we obtain
\begin{align*}
\abs{\E F(g)-\mathrm{Med}(F(g))} \leq C
\end{align*}
for some universal constant $C>0$. Combining the above two displays, for any $t>0$, with probability at least $1-2e^{-t/2}$ we have 
\begin{align*}
\abs{F(g)- \E F(g)}\leq \sqrt{t}+C. 
\end{align*}
Now adjusting the constants to conclude. 
\end{proof}

\subsection{A continuity lemma}

\begin{lemma}\label{lem:sup_set_conv}
	Let $f: \R^n\to \R$ be a continuous function.
	\begin{enumerate}
		\item Suppose $\{S_k: k\in \N\}$ is a sequence of non-increasing compact sets in $\R^n$. Then with $S_\infty\equiv \cap_{k\in \N} S_k$, we have $\sup_{x \in S_k} f(x)\downarrow \sup_{x \in S_\infty} f(x)$.
		\item Suppose $\{T_k: k\in \N\}$ is a sequence of non-decreasing compact sets in $\R^n$ such that $T_k\subset T$ for some compact set $T\subset \R^n$ for all $k \in \N$. Then with $T_\infty\equiv \mathrm{cl}\big(\cup_{k\in \N} T_k\big)$, we have $\sup_{x \in T_k} f(x)\uparrow \sup_{x \in T_\infty} f(x)$.
	\end{enumerate}

\end{lemma}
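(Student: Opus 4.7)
The overall strategy for both parts is the same: exploit compactness to extract maximizers or approximating sequences, and use the continuity of $f$ together with closedness of the relevant sets to pass to the limit. Monotonicity of the suprema (from the monotonicity of the underlying set sequences) gives us one-sided inequalities for free; the content of the lemma is the matching reverse inequality.

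For part (1), since $S_k\supset S_{k+1}\supset S_\infty$, the sequence $a_k\equiv \sup_{x\in S_k}f(x)$ is non-increasing and bounded below by $a_\infty\equiv \sup_{x\in S_\infty}f(x)$, so $a_k\downarrow L$ for some $L\geq a_\infty$. To obtain $L\leq a_\infty$, the plan is to pick, by compactness of $S_k$ and continuity of $f$, a maximizer $x_k\in S_k$ with $f(x_k)=a_k$. Since $\{x_k\}\subset S_1$ which is compact, extract a convergent subsequence $x_{k_j}\to x^*$. For each fixed $m$, the tail $\{x_{k_j}\}_{k_j\geq m}$ lies in $S_m$ (by monotonicity), and since $S_m$ is closed we get $x^*\in S_m$; taking the intersection over $m$ yields $x^*\in S_\infty$. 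Continuity of $f$ then gives $f(x^*)=\lim_j f(x_{k_j})=L$, hence $L\leq a_\infty$.

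For part (2), since $T_k\subset T_{k+1}\subset T_\infty$ and $T_\infty$ is a closed subset of the compact set $T$, hence compact, the sequence $b_k\equiv \sup_{x\in T_k}f(x)$ is non-decreasing and bounded above by $b_\infty\equiv \sup_{x\in T_\infty}f(x)$, which is attained at some $x^*\in T_\infty$. Because $x^*\in \mathrm{cl}(\cup_k T_k)$, there exists a sequence $y_j\in \cup_k T_k$ with $y_j\to x^*$; say $y_j\in T_{k_j}$. Then $f(y_j)\leq b_{k_j}\leq \lim_k b_k$, and by continuity $f(x^*)=\lim_j f(y_j)\leq \lim_k b_k$, so $b_\infty\leq \lim_k b_k$, which combined with the opposite inequality from monotonicity closes the argument.

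There is no real obstacle here; the only mild care needed is to invoke the correct closedness statement in each part (closedness of each $S_m$ for part (1); closedness of $T_\infty$, which is built in by definition, for part (2)) and to ensure the extracted subsequences live in a fixed compact set so Bolzano--Weierstrass applies. Both ingredients are supplied by the hypotheses, so the proof reduces to the two diagonal extraction arguments described above.
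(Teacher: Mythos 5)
Your proposal is correct and follows essentially the same route as the paper's proof: part (1) via extracting maximizers, passing to a convergent subsequence in the compact set $S_1$, and using closedness of each $S_m$ to place the limit in $S_\infty$; part (2) via approximating points of $T_\infty$ by points of $\cup_k T_k$ and invoking continuity. The only cosmetic difference is that in part (2) you first fix a maximizer on $T_\infty$ and approximate it, whereas the paper takes the supremum over $\cup_k T_k$ and then passes to the closure by continuity — the two are interchangeable.
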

\begin{proof}
	\noindent (1). 	As $f$ is continuous and $S_k$ is compact, we may find some $x_k \in S_k$ such that $f(x_k)=\sup_{x \in S_k}f(x)$. As the sequence $\{x_k: k\in \N\}$ is contained in some compact set, say, $S_1$, we may assume without loss generality that $x_k \to x_\infty$ for some $x_\infty \in S_1$. As all subsequential limits of $\{S_k\}$ are contained in $S_\infty$ due to the monotonicity of $S_k$, we have  $x_\infty \in S_\infty$. This means $\sup_{x \in S_k} f(x)=f(x_k)\to f(x_\infty)\leq \sup_{x \in S_\infty} f(x)$, i.e., $\limsup_k \sup_{x \in S_k} f(x)\leq \sup_{x \in S_\infty} f(x)$. The other direction is trivial. 
	
	\noindent (2). 	For any $y_\infty \in \cup_{k\in \N} T_k$, there exists some $k \in \N$ such that $y_\infty \in T_k$. This means $\sup_{y \in T_k} f(y)\geq f(y_\infty)$. As $\{T_k\}$ is non-decreasing, we have $\liminf_k \sup_{y \in T_k} f(y)\geq f(y_\infty)$. Maximizing over $y_\infty \in \cup_{k\in \N} T_k$ and using the continuity of $f$, we have $
	\liminf_k \sup_{y \in T_k} f(y)\geq \sup_{y_\infty \in \cup_{k\in \N} T_k} f(y_\infty)=\sup_{y \in T_\infty} f(y)$. 
	The other direction is again trivial. 
\end{proof}

\section{Proofs of Theorem \ref{thm:approx_kinematics} and Corollary \ref{cor:gordon_escape}}\label{section:proof_approx_kinematics}

\subsection{Proof of Theorem \ref{thm:approx_kinematics}: small $\delta(K)$ regime}

The following proposition shows that for `small' values of $\delta(K)$, the support function of $L\cap G(K\cap B_n)$ will be  trivial \emph{uniformly in all directions} with high probability. 

\begin{proposition}\label{prop:support_fcn_proj_upper}
Suppose that $K \subset \R^n$ and $L\subset \R^m$ are closed convex cones with $K\neq \{0\}, L\notin \{\{0\},\R^m\}$. Fix $x \in L\cap \partial B_m$. Then there exists some universal constant $C>0$ such that for any $t\geq 1$,
\begin{align*}
\Prob\Big(\sup_{x \in L\cap \partial B_m}\mathsf{h}_{L\cap G K_n}(x)\neq 0\Big)\leq \bm{1}\Big(\sqrt{\delta(K)}> \sqrt{\delta(L^\circ)}-C\sqrt{t}\Big)+e^{-t}.
\end{align*}
Here $K_n\equiv K\cap B_n$.
\end{proposition}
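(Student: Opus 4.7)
The plan is to prove the tail bound in two stages: a convex Gaussian min-max (CGMT) reduction yielding the probabilistic upper estimate~(\ref{ineq:upper_estimate_supp_fcn}), and a Gaussian comparison argument showing that the resulting Gordon auxiliary is non-positive with the claimed probability. The starting point is the variational identity
\[
\sup_{x\in L\cap \partial B_m}\mathsf{h}_{L\cap GK_n}(x) = \sup\big\{\iprod{x}{v} : x\in L\cap \partial B_m,\,\mu\in K_n,\, v\in L,\, v=G\mu\big\},
\]
in which I will dualize the linear equality $v=G\mu$ via a truncated Lagrange multiplier $\lambda\in \Lambda B_m$ and restrict $v\in L\cap V B_m$ for a sufficiently large deterministic $V$ (e.g.\ $V\asymp \sqrt{n}+\sqrt{m}$, valid on a high-probability event controlling $\pnorm{G}{\op}$), so that the relaxation becomes a max-min problem over compact sets. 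The bilinear form $\iprod{\lambda}{G\mu}=\mu^\top G^\top \lambda$ matches the CGMT template with max block $u_1=\mu\in \R^n$ and min block $v_1=\lambda\in \R^m$ once one takes the CGMT matrix to be $G^\top \equald G$; since only Part~(1) of Theorem~\ref{thm:CGMT} is needed, the non-convexity of $L\cap \partial B_m$ in $D_u$ is not an obstacle. This produces, for every $\epsilon>0$,
\[
\Prob\Big(\sup_{x\in L\cap \partial B_m}\mathsf{h}_{L\cap GK_n}(x)\geq \epsilon\Big)\leq 2\,\Prob\big(\Phi^{\mathrm{a}}_\Lambda(g,h)\geq \epsilon\big),
\]
where $g\sim \mathcal{N}(0,I_n)$, $h\sim \mathcal{N}(0,I_m)$ are independent and $\Phi^{\mathrm{a}}_\Lambda=\sup_{x,\mu,v}\inf_{\lambda\in \Lambda B_m}\big[\pnorm{\lambda}{}\iprod{g}{\mu}+\pnorm{\mu}{}\iprod{h}{\lambda}+\iprod{x-\lambda}{v}\big]$.

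Parametrizing $\lambda=re$ with $r\in[0,\Lambda]$ and $\pnorm{e}{}=1$, the inner minimization in $\lambda$ evaluates in closed form to $\iprod{x}{v}+\min_{r\in [0,\Lambda]} r\big(\iprod{g}{\mu}-\pnorm{\pnorm{\mu}{}h-v}{}\big)$, which equals $\iprod{x}{v}$ under the Gordon feasibility $\iprod{g}{\mu}\geq \pnorm{\pnorm{\mu}{}h-v}{}$ and tends to $-\infty$ as $\Lambda\to\infty$ otherwise; this delivers the estimate~(\ref{ineq:upper_estimate_supp_fcn}) in the limit. The right-hand side of~(\ref{ineq:upper_estimate_supp_fcn}) is always $\geq 0$ (attained by $(\mu,v)=(0,0)$) and exceeds $0$ only when some $\mu\in K_n\setminus\{0\}$ and $v\in L\setminus\{0\}$ satisfy the Gordon constraint, i.e.\ when the Euclidean ball around $\pnorm{\mu}{}h$ of radius $\iprod{g}{\mu}$ meets $L\setminus \{0\}$. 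Since $L$ is a cone, this distance equals $\pnorm{\mu}{}\pnorm{\Pi_{L^\circ}(h)}{}$, so feasibility with $\mu\neq 0$ reduces to $\iprod{g}{\mu/\pnorm{\mu}{}}\geq \pnorm{\Pi_{L^\circ}(h)}{}$; maximizing over $\mu\in K\cap \partial B_n$ yields the single-event containment
\[
\big\{\text{RHS of~(\ref{ineq:upper_estimate_supp_fcn})}>0\big\}\subset \big\{\pnorm{\Pi_K(g)}{}\geq \pnorm{\Pi_{L^\circ}(h)}{}\big\}.
\]

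Proposition~\ref{prop:stat_dim} gives $\E\pnorm{\Pi_K(g)}{}\leq \sqrt{\delta(K)}$ and $\E\pnorm{\Pi_{L^\circ}(h)}{}\geq \sqrt{\delta(L^\circ)}-1$, and both $g\mapsto \pnorm{\Pi_K(g)}{}$ and $h\mapsto \pnorm{\Pi_{L^\circ}(h)}{}$ are $1$-Lipschitz. Proposition~\ref{prop:gaussian_conc_generic} together with the independence of $g,h$ will therefore bound the right-hand event above by $e^{-t}$ whenever $\sqrt{\delta(K)}\leq \sqrt{\delta(L^\circ)}-C\sqrt{t}$ for a large enough universal $C>0$, completing the argument after constant adjustment. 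The main technical subtlety lies in justifying the $\Lambda\to\infty$ passage after applying CGMT at each finite $\Lambda$: I plan to exploit the pointwise monotonicity of $\Phi^{\mathrm{a}}_\Lambda$ in $\Lambda$, together with Lemma~\ref{lem:sup_set_conv} and the $\epsilon\downarrow 0$ limit, to reduce the finite-$\Lambda$ CGMT tail bound to the single Gaussian comparison event identified above.
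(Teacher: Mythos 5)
Your proposal is correct and follows essentially the same route as the paper's proof: the same Lagrangian max--min representation with a truncated multiplier (your $\lambda\in\Lambda B_m$ is the paper's $w\in B_m(R_1)$), the same one-sided CGMT reduction and closed-form inner minimization yielding the Gordon feasibility constraint $\iprod{g}{\mu}\geq \pnorm{\,\pnorm{\mu}{}h-v}{}$, the same reduction of that constraint to $\sup_{\mu\in K\cap\partial B_n}\iprod{g}{\mu}\geq \pnorm{\Pi_{L^\circ}(h)}{}$ via the Moreau/distance-to-$L$ identity, and the same concentration-plus-width-to-dimension conversion; the $\Lambda\uparrow\infty$ limit you flag is handled in the paper exactly as you propose, via monotonicity of the feasible sets and Lemma \ref{lem:sup_set_conv}.
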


\begin{proof}
We shall drop the subscript in $\mathsf{h}_{L\cap G K_n}$ for notational convenience in the proof. Let $E(R)\equiv \{\pnorm{G}{\op}\leq R\}$. Note that
\begin{align}\label{ineq:support_fcn_proj_sup_upper_0}
\mathsf{h}(x) = \sup_{\mu \in K_n, G\mu \in L}\iprod{x}{G\mu}=\sup_{v \in L, \mu \in K_n} \inf_{w \in \R^m} \Big\{\iprod{x}{v}+\iprod{w}{G\mu-v}\Big\},
\end{align}
and on the event $E(R)$, the supremum above over $v \in L$ can be restricted to $v \in L\cap B_m(R)$ uniformly for all $x \in L\cap \partial B_m$. This means on  the event $E(R)$,
\begin{align*}
\sup_{x \in L\cap \partial B_m} \mathsf{h}(x) = \sup_{\substack{x \in L\cap \partial B_m, \\v \in L \cap B_m(R), \mu \in K_n}} \inf_{w \in \R^m} \Big\{\iprod{x}{v}+\iprod{w}{G\mu-v}\Big\},
\end{align*}
By the one-sided Gaussian min-max theorem (cf. Theorem \ref{thm:CGMT}-(1)), for any $z \in \R$ and $R,R_1>0$, we have
\begin{align}\label{ineq:support_fcn_proj_sup_upper_1}
&\Prob\bigg(\sup_{x \in L\cap \partial B_m} \mathsf{h}(x) >z\bigg)-\Prob(E(R)^c)\\
&\leq \Prob\bigg(\sup_{\substack{x \in L\cap \partial B_m, \\v \in L \cap B_m(R), \mu \in K_n}} \inf_{w \in B_m(R_1)} \Big\{\iprod{x}{v}+\iprod{w}{G\mu-v}\Big\}>z\bigg)\nonumber\\
&\leq 2\Prob\bigg(\sup_{\substack{x \in L\cap \partial B_m, \\v \in L \cap B_m(R), \mu \in K_n}} \inf_{w \in B_m(R_1)} \Big\{\iprod{x}{v}-\iprod{w}{v}+\pnorm{\mu}{}\iprod{h}{w}+\pnorm{w}{}\iprod{g}{\mu}\Big\}>z\bigg)\nonumber.
\end{align}
On the other hand, 
\begin{align}\label{ineq:support_fcn_proj_sup_upper_2}
&\sup_{\substack{x \in L\cap \partial B_m, \\v \in L \cap B_m(R), \mu \in K_n}} \inf_{w \in B_m(R_1)} \Big\{\iprod{x}{v}-\iprod{w}{v}+\pnorm{\mu}{}\iprod{h}{w}+\pnorm{w}{}\iprod{g}{\mu}\Big\}\nonumber\\
& = \sup_{\substack{x \in L\cap \partial B_m, \\v \in L \cap B_m(R), \mu \in K_n}} \inf_{0\leq \beta\leq R_1} \Big\{ \beta\cdot  \Big(\iprod{g}{\mu}-\bigpnorm{ \pnorm{\mu}{}h-v }{}\Big)+\iprod{x}{v}\Big\}.
\end{align}
As the above max-min problem must take non-negative cost optimum, and $\iprod{x}{v}\leq R$ for $x \in L\cap \partial B_m,v \in L \cap B_m(R)$, it follows that any pair of $(\mu,v) $ such that $\iprod{g}{\mu}-\pnorm{\, \pnorm{\mu}{}h-v }{}<-R/R_1$ is not a feasible maximizer of (\ref{ineq:support_fcn_proj_sup_upper_2}). This means that the supremum of (\ref{ineq:support_fcn_proj_sup_upper_2}) can be further restricted as follows:
\begin{align}\label{ineq:support_fcn_proj_sup_upper_3}
(\ref{ineq:support_fcn_proj_sup_upper_2})&=\sup_{\substack{x \in L\cap \partial B_m, \\v \in L \cap B_m(R), \mu \in K_n,\\ \iprod{g}{\mu}-\pnorm{ \,\pnorm{\mu}{}h-v }{}\geq-R/R_1}} \inf_{0\leq \beta\leq R_1} \Big\{ \beta\cdot  \Big(\iprod{g}{\mu}-\bigpnorm{ \pnorm{\mu}{}h-v }{}\Big)+\iprod{x}{v}\Big\}\nonumber\\
& \stackrel{(\ast)}{\leq} \sup_{\substack{x \in L\cap \partial B_m, \\v \in L \cap B_m(R), \mu \in K_n}} \iprod{x}{v}\,\, \hbox{ subject to } \iprod{g}{\mu}-\bigpnorm{ \pnorm{\mu}{}h-v }{}\geq -R/R_1.
\end{align}
The inequality $(\ast)$ in the above display follows by taking $\beta=0$ in the infimum. Using Moreau's theorem (cf. Lemma \ref{lem:moreau_thm}), we may write $h=\Pi_L(h)+\Pi_{L^\circ}(h)$ with $\iprod{\Pi_L(h)}{\Pi_{L^\circ}(h)}=0$, so
\begin{align*}
\bigpnorm{ \pnorm{\mu}{}h-v }{}^2&= \bigpnorm{ \pnorm{\mu}{}\Pi_L(h)-v+ \pnorm{\mu}{}\Pi_{L^\circ}(h) }{}^2\\
& = \bigpnorm{ \pnorm{\mu}{}\Pi_L(h)-v}{}^2+ \pnorm{\mu}{}^2\cdot \pnorm{\Pi_{L^\circ}(h)}{}^2+2\iprod{ \pnorm{\mu}{}\Pi_L(h)-v}{\pnorm{\mu}{}\Pi_{L^\circ}(h) }\\
&\stackrel{(\ast\ast)}{\geq} \bigpnorm{ \pnorm{\mu}{}\Pi_L(h)-v}{}^2+ \pnorm{\mu}{}^2\cdot \pnorm{\Pi_{L^\circ}(h)}{}^2,
\end{align*}
where in the inequality $(\ast\ast)$ we used that $\iprod{-v}{\Pi_{L^\circ}(h)}\geq 0$. Combining the above inequality with (\ref{ineq:support_fcn_proj_sup_upper_1})-(\ref{ineq:support_fcn_proj_sup_upper_3}), we then have
\begin{align}\label{ineq:support_fcn_proj_sup_upper_4}
\Prob\bigg(\sup_{x \in L\cap \partial B_m} \mathsf{h}(x) >z\bigg)
 \leq 2\Prob\bigg(\sup_{\substack{x \in L\cap \partial B_m, v \in \mathscr{Q}(R,R_1) } } \iprod{x}{v}>z\bigg)+\Prob(E(R)^c),
\end{align}
where
\begin{align*}
\mathscr{Q}(R,R_1)&\equiv \mathscr{Q}(R,R_1;g,h)\equiv \bigg\{v\in L\cap B_m(R):\exists \mu \in K_n,\\
&\qquad\qquad \hbox{ s.t. }\iprod{g}{\mu}\geq \sqrt{\bigpnorm{ \pnorm{\mu}{}\Pi_L(h)-v}{}^2+ \pnorm{\mu}{}^2\pnorm{\Pi_{L^\circ}(h)}{}^2}-\frac{R}{R_1}\bigg\}.
\end{align*}
Clearly $\mathscr{Q}(R,R_1)$ is compact. Note that $R_1 \mapsto \mathscr{Q}(R,R_1)$ is non-increasing as $R_1\uparrow \infty$, so with $\overline{\mathscr{Q}}(R)\equiv \cap_{R_1>0}\mathscr{Q}(R,R_1)\subset \mathscr{Q}(R,\infty)$,  by using Lemma \ref{lem:sup_set_conv} we have 
\begin{align*}
\sup_{\substack{x \in L\cap \partial B_m, v \in \mathscr{Q}(R,R_1) } } \iprod{x}{v}\downarrow \sup_{\substack{x \in L\cap \partial B_m, v \in \overline{\mathscr{Q}}(R) } } \iprod{x}{v}\leq \sup_{\substack{x \in L\cap \partial B_m, v \in \mathscr{Q}(R,\infty) } } \iprod{x}{v}.
\end{align*}
Now taking the limit $R_1\uparrow \infty$ on the right hand side of (\ref{ineq:support_fcn_proj_sup_upper_4}), we have for all $z\in \R$ and $R>0$, 
\begin{align}\label{ineq:support_fcn_proj_sup_upper_5}
&\Prob\bigg(\sup_{x \in L\cap \partial B_m} \mathsf{h}(x) >z\bigg)
\leq 2\Prob\bigg(\sup_{\substack{x \in L\cap \partial B_m, v \in \mathscr{Q}(R,\infty) } } \iprod{x}{v}\geq z\bigg)+\Prob(E(R)^c).
\end{align}
By Gaussian concentration in Proposition \ref{prop:gaussian_conc_generic}, there exists some universal constant $C_0>0$ such that for all $t\geq 1$, on an event $E_0(t)$ with probability at least $1-e^{-t}$,
\begin{align}\label{def:event_E0_conc_gw}
\bigabs{\sup_{\mu \in K\cap \partial B_n}\iprod{g}{\mu}-\gw(K\cap \partial B_n)}\vee \bigabs{ \pnorm{\Pi_{L^\circ}(h)}{}- \gw(L^\circ\cap \partial B_m)}\leq C_0\sqrt{t}.
\end{align}
Here, for the concentration claim concerning $\pnorm{\Pi_{L^\circ}(h)}{}$, we used (i) the simple fact that $\pnorm{\Pi_{L^\circ}(h)}{}=\sup_{v \in L^\circ \cap B_m}\iprod{h}{v}$ (see, e.g., \cite[Appendix B.4]{amelunxen2014living} for a proof of this fact), and (ii) Proposition \ref{prop:stat_dim}-(3) to replace $\gw(L^\circ\cap B_m)$ with $\gw(L^\circ\cap \partial B_m)$ using the condition that $L^\circ$ is non-trivial.

We shall now show that on the event $E_0(t)$ defined in (\ref{def:event_E0_conc_gw}) above, 
\begin{align}\label{ineq:support_fcn_proj_sup_upper_6}
\gw(K\cap \partial B_n)<\gw(L\cap \partial B_m)-2C_0\sqrt{t}\,\, \Rightarrow \,\, \mathscr{Q}(R,\infty)=\{0\}.
\end{align}
To see this, if there exists some $v \in \mathscr{Q}(R,\infty)$ with $v\neq 0$, then there must exist some $\mu \in K_n\setminus \{0\}$ such that 
\begin{align*}
\iprod{g}{\mu}\geq \sqrt{\bigpnorm{ \pnorm{\mu}{}\Pi_L(h)-v}{}^2+ \pnorm{\mu}{}^2\pnorm{\Pi_{L^\circ}(h)}{}^2}\geq \pnorm{\mu}{} \pnorm{\Pi_{L^\circ}(h)}{}.
\end{align*}
This means that
\begin{align*}
\Big\{\mathscr{Q}(R,\infty)\setminus \{0\}\neq \emptyset\Big\}\cap E_0(t)&\subset \Big\{\sup_{\mu \in K\cap \partial B_n} \iprod{g}{\mu}\geq \pnorm{\Pi_{L^\circ}(h)}{}\Big\}\cap E_0(t)\\
&\subset \Big\{\gw(K\cap \partial B_n)\geq \gw(L\cap \partial B_m)-2C_0\sqrt{t}\Big\}\cap E_0(t),
\end{align*}
proving the claim (\ref{ineq:support_fcn_proj_sup_upper_6}). Now using (\ref{ineq:support_fcn_proj_sup_upper_5}), for any $z>0$, we have 
\begin{align*}
\Prob\bigg(\sup_{x \in L\cap \partial B_m} \mathsf{h}(x) >z\bigg)\leq \bm{1}\Big(\gw(K\cap \partial B_n)\geq \gw(L\cap \partial B_m)-2C_0\sqrt{t}\Big)+2e^{-t}+\Prob(E(R)^c).
\end{align*}
Finally taking $z\downarrow 0$ and $R\uparrow \infty$ to conclude, upon noting that (i) $\sup_{x \in L\cap \partial B_m} \mathsf{h}(x) \neq 0$ is equivalent to $\sup_{x \in L\cap \partial B_m} \mathsf{h}(x) > 0$, and (ii) the quantities  $\gw(K\cap \partial B_n), \gw(L\cap \partial B_m)$ can be replaced by $\sqrt{\delta(K)}$ and $\sqrt{\delta(L)}$ for non-trivial $K,L$ via an application of Proposition \ref{prop:stat_dim}-(3)(4).
\end{proof}

Below we shall give a different proof for a `pointwise' version of Proposition \ref{prop:support_fcn_proj_upper}: Under the same setup as in Proposition \ref{prop:support_fcn_proj_upper}, we will show that there exists some universal constant $C>0$ such that for any $x \in L\cap \partial B_m$ and $t\geq 1$,
\begin{align}\label{ineq:pointwise_support_fcn}
\Prob\Big(\mathsf{h}_{L\cap G K_n}(x)\neq 0\Big)\leq \bm{1}\Big(\sqrt{\delta(K)}> \sqrt{\delta(L^\circ)}-C\sqrt{t}\Big)+e^{-t}.
\end{align}
While the conclusion (\ref{ineq:pointwise_support_fcn}) per se is weaker, the proof below exploits a different max-min representation of the support function $\mathsf{h}_{L\cap G K_n}$ from the one used in (\ref{ineq:support_fcn_proj_sup_upper_0}). In particular, using Lemma \ref{lem:cone_polarity}, we may write
\begin{align}\label{ineq:support_fcn_proj_upper_0}
\mathsf{h}_{L\cap G K_n}(x) &= \sup_{\mu \in K_n, G\mu \in L}\iprod{x}{G\mu} = \sup_{\mu \in K_n}\inf_{v \in L^\circ} \Big\{ \iprod{x}{G\mu}-\iprod{v}{G\mu} \Big\}.
\end{align}
This representation (\ref{ineq:support_fcn_proj_upper_0}) will be more convenient to work with in the `large $\delta(K)$ regime' in Proposition \ref{prop:support_fcn_proj_lower} below, as well as in the proofs of  Theorems \ref{thm:conic_program} and \ref{thm:local_dm} in Sections \ref{section:proof_conic_program} and \ref{section:proof_DM} ahead.

\begin{proof}[Proof of the pointwise version (\ref{ineq:pointwise_support_fcn}) via the representation (\ref{ineq:support_fcn_proj_upper_0})]
We again write $\mathsf{h}\equiv\mathsf{h}_{L\cap G K_n}$ for notational convenience in the proof. By the one-sided Gaussian min-max theorem (cf. Theorem \ref{thm:CGMT}-(1)), for any $z \in \R$ and $R>0$,
\begin{align}\label{ineq:support_fcn_proj_upper_1}
\Prob\big(\mathsf{h}(x)>z\big)&\leq \Prob\bigg( \sup_{\mu \in K_n}\inf_{v \in L^\circ \cap B_m(R)} \iprod{x-v}{G\mu} >z  \bigg)\nonumber\\
&\leq 2 \Prob\bigg(\sup_{\mu \in K_n}\inf_{v \in L^\circ \cap B_m(R)} \Big\{-\pnorm{\mu}{}\iprod{h}{v-x}+\pnorm{v-x}{}\iprod{g}{\mu} \Big\}>z   \bigg).
\end{align}
Using that $\{v \in L^\circ: \pnorm{v-x}{}\leq R-1\}\subset L^\circ\cap B_m(R)$, for any $R>2$, with 
\begin{align}\label{ineq:support_fcn_proj_upper_1_1}
L^\circ(\beta;x)\equiv \{(v-x)/\pnorm{v-x}{}:v \in L^\circ, \pnorm{v-x}{}=\beta\}\subset \partial B_m,
\end{align}
we have
\begin{align}\label{ineq:support_fcn_proj_upper_1_2}
& \sup_{\mu \in K_n}\inf_{v \in L^\circ \cap B_m(R)} \Big\{-\pnorm{\mu}{}\iprod{h}{v-x}+\pnorm{v-x}{}\iprod{g}{\mu} \Big\}\nonumber\\
& \leq \sup_{\mu \in K_n} \inf_{1\leq \beta \leq R-1} \beta \bigg\{-\pnorm{\mu}{}\sup_{w \in L^\circ(\beta;x)} \iprod{h}{w}+\iprod{g}{\mu}\bigg\}.
\end{align}
Now taking $\beta=R-1$ in the infimum, and using the cone property of $K$, we have
\begin{align}\label{ineq:support_fcn_proj_upper_2}
& \sup_{\mu \in K_n}\inf_{v \in L^\circ \cap B_m(R)} \Big\{-\pnorm{\mu}{}\iprod{h}{v-x}+\pnorm{v-x}{}\iprod{g}{\mu} \Big\}\nonumber\\
&\leq (R-1)\cdot \sup_{\mu \in K_n}\bigg(\iprod{g}{\mu}- \pnorm{\mu}{}\sup_{w \in L^\circ(R-1;x)} \iprod{h}{w} \bigg)\nonumber\\
&=(R-1)\cdot\bigg(\sup_{\mu \in K\cap \partial B_n} \iprod{g}{\mu}-\sup_{w \in L^\circ(R-1;x)} \iprod{h}{w}\bigg)_+.
\end{align}
By Gaussian concentration in Proposition \ref{prop:gaussian_conc_generic}, for $t\geq 1$, on an event $E_R(t)$ with probability $1-e^{-t}$,
\begin{align}\label{ineq:support_fcn_proj_upper_4}
&\bigabs{\sup_{\mu \in K\cap \partial B_n} \iprod{g}{\mu}-\gw(K\cap \partial B_n)}\nonumber\\
&\qquad \vee \bigabs{ \sup_{w \in L^\circ(R-1;x)} \iprod{h}{w} -\gw\big(L^\circ(R-1;x)\big)}\leq C\sqrt{t}.
\end{align}
Combined with (\ref{ineq:support_fcn_proj_upper_2}), on the event $E_R(t)$ we have,
\begin{align}\label{ineq:support_fcn_proj_upper_3}
& \sup_{\mu \in K_n}\inf_{v \in L^\circ \cap B_m(R)} \Big\{-\pnorm{\mu}{}\iprod{h}{v-x}+\pnorm{v-x}{}\iprod{g}{\mu} \Big\}\nonumber\\
&\leq (R-1) \cdot \Big(\gw(K\cap \partial B_n)- \gw\big(L^\circ(R-1;x)\big)+C\sqrt{t}\Big)_+.
\end{align}
On the other hand, as for $\beta>1$,
\begin{align*}
&\bigabs{\sup_{w \in L^\circ(\beta;x)} \iprod{h}{w}- \sup_{v \in L^\circ \cap \partial B_m} \iprod{h}{v}}\leq \pnorm{h}{}\sup_{v \in L^\circ, \pnorm{v}{}\geq \beta-1}\biggpnorm{ \frac{v-x}{\pnorm{v-x}{}}-\frac{v}{\pnorm{v}{}} }{}\leq  \frac{2\pnorm{h}{}}{\beta-1},
\end{align*}
we have almost surely $\lim_{\beta\uparrow \infty} \sup_{w \in L^\circ(\beta;x)} \iprod{h}{w}= \sup_{v \in L^\circ \cap \partial B_m} \iprod{h}{v}$, and therefore by integrability of the suprema of Gaussian processes,
\begin{align*}
\lim_{\beta\uparrow \infty} \gw\big(L^\circ(\beta;x) \big)= \gw\big(L^\circ \cap \partial B_m\big). 
\end{align*}
This means that for $t\geq 1$, we may find some large enough deterministic $R_t>0$ such that for all $R\geq R_t$, it holds that
\begin{align*}
\abs{\gw\big(L^\circ(R-1;x) \big)-\gw\big(L^\circ \cap \partial B_m\big)}\leq \sqrt{t}. 
\end{align*}
Combined with (\ref{ineq:support_fcn_proj_upper_3}), for $R\geq R_t$, on the event $E_R(t)$, 
\begin{align*}
& \sup_{\mu \in K_n}\inf_{v \in L^\circ \cap B_m(R)} \Big\{-\pnorm{\mu}{}\iprod{h}{v-x}+\pnorm{v-x}{}\iprod{g}{\mu} \Big\}\nonumber\\
&\leq (R-1) \cdot \Big(\gw(K\cap \partial B_n)- \gw\big(L^\circ\cap \partial B_m\big)+C\sqrt{t}\Big)_+.
\end{align*}
Now using (\ref{ineq:support_fcn_proj_upper_1}) with $z=0$, for any $t\geq 1$ and choosing $R\geq R_t$, 
\begin{align*}
&\Prob\big(\mathsf{h}(x)>z=0\big)\\
&\leq \bm{1}\bigg((R-1) \cdot \Big(\gw(K\cap \partial B_n)- \gw\big(L^\circ\cap \partial B_m\big)+C\sqrt{t}\Big)_+>0\bigg)+2\Prob(E_R(t)^c)\\
& \leq  \bm{1}\Big(\gw(K\cap \partial B_n)- \gw\big(L^\circ\cap \partial B_m\big)+C\sqrt{t}>0\Big)+2e^{-t}.
\end{align*}
Finally using $\mathsf{h}(x)>0\Leftrightarrow \mathsf{h}(x)\neq 0$, and Proposition \ref{prop:stat_dim}-(3)(4) to conclude via adjusting constants. 
\end{proof}

\subsection{Proof of Theorem \ref{thm:approx_kinematics}: large $\delta(K)$ regime}

The following proposition shows that for `large' values of $\delta(K)$, the support function of $L\cap G(K\cap B_n)$ will be non-trivial with high probability.

\begin{proposition}\label{prop:support_fcn_proj_lower}
	Suppose that $K \subset \R^n$ and $L\subset \R^m$ are closed convex cones with $K\neq \{0\}, L\notin \{\{0\},\R^m\}$. Fix $x \in L\cap \partial B_m$. Then there exists some universal constant $C>0$ such that for any $t\geq 1$,
	\begin{align*}
	\Prob\Big(\mathsf{h}_{L\cap G K_n}(x)= 0\Big)\leq\bm{1}\Big(\sqrt{\delta(K)}< \sqrt{\delta(L^\circ)}+C\sqrt{t}\Big)+e^{-t}.
	\end{align*}
	Here $K_n\equiv K\cap B_n$.
\end{proposition}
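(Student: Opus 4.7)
The plan is to mirror the pointwise proof of Proposition \ref{prop:support_fcn_proj_upper} (using the max--min representation derived from Lemma \ref{lem:cone_polarity}) but replace the one-sided application of the convex Gaussian min--max theorem (Theorem \ref{thm:CGMT}(1)) with its two-sided counterpart (Theorem \ref{thm:CGMT}(2)) to extract a lower estimate. Starting from
\begin{align*}
\mathsf{h}_{L\cap GK_n}(x)=\sup_{\mu\in K_n}\inf_{v\in L^\circ}\iprod{x-v}{G\mu},
\end{align*}
I truncate to $v\in L^\circ\cap B_m(R)$ for $R>0$, yielding $\mathsf{h}_R(x)$ with $\mathsf{h}_R(x)\downarrow \mathsf{h}_{L\cap GK_n}(x)$ as $R\uparrow\infty$ by Lemma \ref{lem:sup_set_conv}. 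On the truncated problem, the bilinear form and convex compact sets $K_n, L^\circ\cap B_m(R)$ satisfy Sion's conditions, so Theorem \ref{thm:CGMT}(2) applies (reading the bilinear form via $G^\top$ so that the max variable $\mu\in\R^n$ matches the row dimension): for every $z\in\R$,
\begin{align*}
\Prob\big(\mathsf{h}_R(x)\le z\big)\le 2\Prob\big(\Phi^a_R\le z\big),\quad \Phi^a_R\equiv\sup_{\mu\in K_n}\inf_{v\in L^\circ\cap B_m(R)}\big\{\pnorm{x-v}{}\iprod{g}{\mu}+\pnorm{\mu}{}\iprod{h}{x-v}\big\},
\end{align*}
with independent $g\sim\mathcal{N}(0,I_n)$ and $h\sim\mathcal{N}(0,I_m)$.

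The key step is a \emph{uniform-in-$R$} lower bound on $\Phi^a_R$. Restricting the outer supremum to a distinguished $\nu^*\in K\cap\partial B_n$ maximizing $\iprod{g}{\cdot}$, and parametrizing $v=tp$ along $p\equiv \Pi_{L^\circ}(h)/\pnorm{\Pi_{L^\circ}(h)}{}$ with $t\in[0,R]$, reduces the inner infimum to one-dimensional calculus. Writing $c\equiv\iprod{g}{\nu^*}$ and $b\equiv\pnorm{\Pi_{L^\circ}(h)}{}$, and using $\pnorm{x-v}{}^2=1+2t(-\iprod{x}{p})+t^2$ (since $\pnorm{x}{}=\pnorm{p}{}=1$ and $\iprod{x}{p}\le 0$ as $x\in L, p\in L^\circ$) together with Moreau's decomposition $h=\Pi_L(h)+\Pi_{L^\circ}(h)$ from Lemma \ref{lem:moreau_thm}, the unique interior minimizer $t^*>0$ has value
\begin{align*}
\sqrt{(c^2-b^2)(1-\iprod{x}{p}^2)}+\iprod{\Pi_L(h)}{x}+b\cdot\iprod{x}{p}\qquad(c>b),
\end{align*}
which depends on $R$ only through whether $t^*\le R$; in either case the inner infimum over $v\in L^\circ\cap B_m(R)$ is at least this value. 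Combined with Gaussian concentration (Proposition \ref{prop:gaussian_conc_generic}) giving $c\approx\sqrt{\delta(K)}$ and $b\approx\sqrt{\delta(L^\circ)}$ up to $O(\sqrt{t})$ (identified via Proposition \ref{prop:stat_dim}), one obtains $\Phi^a_R\ge c_0\big(\sqrt{\delta(K)}-\sqrt{\delta(L^\circ)}\big)-C\sqrt{t}$ uniformly in $R$ on an event of probability at least $1-e^{-t}$.

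Finally, when $\sqrt{\delta(K)}\ge\sqrt{\delta(L^\circ)}+C\sqrt{t}$ with $C$ large enough, Theorem \ref{thm:CGMT}(2) yields $\Prob\big(\mathsf{h}_R(x)\ge c_1\big)\ge 1-2e^{-t}$ for some $c_1>0$, uniformly in $R$. Monotone continuity of probability, together with $\mathsf{h}_R(x)\downarrow \mathsf{h}_{L\cap GK_n}(x)$, then gives
\begin{align*}
\Prob\big(\mathsf{h}_{L\cap GK_n}(x)\ge c_1\big)=\lim_{R\to\infty}\Prob\big(\mathsf{h}_R(x)\ge c_1\big)\ge 1-2e^{-t},
\end{align*}
which proves the claim after adjusting constants. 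The principal obstacle is the uniform-in-$R$ control of $\Phi^a_R$: the untruncated inner infimum over $v\in L^\circ$ diverges to $-\infty$ along any direction $\mu$ with $\iprod{g}{\mu}<b$, and only the careful restriction of the outer supremum to $\nu^*$ (which in the regime $\delta(K)>\delta(L^\circ)$ satisfies $\iprod{g}{\nu^*}>b$ with high probability) together with the closed-form one-dimensional minimization keeps the bound finite and produces the Pythagorean $\sqrt{c^2-b^2}$ structure.
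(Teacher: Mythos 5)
Your overall architecture (two-sided CGMT on a truncated primal problem, then a uniform-in-$R$ lower bound on the Gordon auxiliary $\Phi^a_R$, then monotone convergence) is sound and parallels the paper's Steps 1--2, which achieve the localization via a quadratic surrogate $\frac{\epsilon}{2}\pnorm{v}{}^2$ rather than a hard truncation. The gap is in your analogue of the paper's Step 3. After fixing $\mu=\nu^*$, you must bound $\inf_{v\in L^\circ\cap B_m(R)}\big\{\pnorm{x-v}{}\iprod{g}{\nu^*}+\iprod{h}{x-v}\big\}$ \emph{from below}, but restricting $v$ to the ray $\{tp: t\in[0,R]\}$ with $p=\Pi_{L^\circ}(h)/\pnorm{\Pi_{L^\circ}(h)}{}$ only bounds this infimum \emph{from above}: the infimum over the full set $L^\circ\cap B_m(R)$ is at most the infimum over any subset of it. Your assertion that ``in either case the inner infimum over $v\in L^\circ\cap B_m(R)$ is at least this value'' is exactly the reversed inequality and is not true in general: the minimizing direction trades off maximizing $\iprod{h}{v/\pnorm{v}{}}$ against minimizing $-\iprod{x}{v/\pnorm{v}{}}$ (which enters through $\pnorm{x-v}{}$), and need not equal $p$. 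This is precisely the difficulty the paper's Step 3 confronts by infimizing $\mathsf{P}(\beta;\mu,v)$ over \emph{all} $v\in L^\circ\cap\partial B_m$ and all $\beta\geq 0$ (Cases 1 and 2, via Lemma \ref{lem:fcn_P}), paying a factor $\tfrac12$ through the elementary bound $\inf_{\alpha\in[0,1]}\{\sqrt{1-\alpha}\,M_1+\alpha M_2\}\geq (M_1\wedge M_2)/2$.

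A secondary problem: even granting the ray reduction, your closed-form value contains the term $b\cdot\iprod{x}{p}\leq 0$ with $b\approx\sqrt{\delta(L^\circ)}$, and $\iprod{x}{p}$ is not small in general (it can approach $-1$ for suitable cones $L$), so the displayed expression does not yield the claimed $\Phi^a_R\geq c_0(\sqrt{\delta(K)}-\sqrt{\delta(L^\circ)})-C\sqrt{t}$ without further argument. The step can be repaired without the paper's case analysis: writing $v=\beta w$ with $w\in L^\circ\cap\partial B_m$ and using the termwise bounds $-\iprod{h}{v}\geq -\iprod{\Pi_{L^\circ}(h)}{v}\geq -\beta\pnorm{\Pi_{L^\circ}(h)}{}$ (since $\iprod{\Pi_L(h)}{v}\leq 0$) and $\pnorm{x-v}{}\geq\sqrt{1+\beta^2}$ (since $\iprod{x}{v}\leq 0$), valid once $\iprod{g}{\nu^*}\geq 0$, one gets a genuine lower bound $\inf_v f(\nu^*,v)\geq\inf_{\beta\geq0}\{\sqrt{1+\beta^2}\,c-\beta b\}-\abs{\iprod{h}{x}}=\sqrt{(c^2-b^2)_+}-\abs{\iprod{h}{x}}$ for $c\geq b$, which is uniform in $R$ and suffices. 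As written, though, the proposal's key inequality points the wrong way.
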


\begin{proof}
	We shall again drop the subscript in $\mathsf{h}_{L\cap G K_n}$ in the proof. Recall the representation of $\mathsf{h}$ in (\ref{ineq:support_fcn_proj_upper_0}). We shall define a surrogate function of $\mathsf{h}$ as follows: Let for $\epsilon>0$
	\begin{align*}
	\mathsf{h}_{\epsilon}(x)&\equiv \sup_{\mu \in K_n}\inf_{v \in L^\circ} \bigg\{\iprod{x}{G\mu}-\iprod{v}{G\mu}+\frac{\epsilon}{2}\pnorm{v}{}^2\bigg\}\\
	&\stackrel{(\ast)}{=}\sup_{\mu \in K_n}\bigg\{\iprod{x}{G\mu}-\frac{ 1}{2\epsilon}\bigg(\sup_{v \in L^\circ \cap B_m}\iprod{v}{G\mu}\bigg)^2\bigg\}.
	\end{align*}
	Here the last equality $(\ast)$ follows as
	\begin{align*}
	&\inf_{v \in L^\circ}\bigg\{-\iprod{v}{G\mu}+\frac{\epsilon}{2}\pnorm{v}{}^2\bigg\} = -\sup_{\beta \geq 0} \bigg\{\sup_{v \in L^\circ,\pnorm{v}{}=\beta}\iprod{v}{G\mu}-\frac{\epsilon}{2}\cdot \beta^2\bigg\}\\
	& = - \max\bigg\{0, \sup_{\beta>0} \bigg(\beta \sup_{v \in L^\circ \cap \partial B_m}\iprod{v}{G\mu}- \frac{\epsilon}{2}\cdot \beta^2\bigg) \bigg\}\\
	&= - \frac{1}{2\epsilon} \bigg(0\vee\sup_{v \in L^\circ \cap \partial B_m}\iprod{v}{G\mu}\bigg)^2=-\frac{ 1}{2\epsilon}\bigg(\sup_{v \in L^\circ \cap B_m}\iprod{v}{G\mu}\bigg)^2.
	\end{align*}
	The main purpose of $\mathsf{h}_\epsilon$, as will be clear in Step 2 below, is to induce automatic localization over $\inf_{v \in L^\circ}$ that facilitates applications of the Gaussian min-max theorem. By definition of $\mathsf{h}_\epsilon$, clearly $\mathsf{h}(x)\leq \mathsf{h}_{\epsilon}(x)$. 
	
	\noindent (\textbf{Step 1}). We shall first prove
	\begin{align}\label{ineq:support_fcn_proj_lower_0}
	\lim_{\epsilon\downarrow 0} \mathsf{h}_\epsilon(x)=\mathsf{h}(x).
	\end{align}	
	Let $E_0\equiv \{\pnorm{G}{\op}>0\}$. Then $\Prob(E_0)=1$. Note that for any $\mu \in K_n$ such that $\big(\sup_{v \in L^\circ \cap B_m}\iprod{v}{G\mu}\big)^2>4\pnorm{G}{\op}\epsilon$, we have the simple estimate
	\begin{align*}
	\iprod{x}{G\mu}-\frac{1}{2\epsilon}\bigg(\sup\limits_{v \in L^\circ \cap B_m}\iprod{v}{G\mu}\bigg)^2<-\pnorm{G}{\op},
	\end{align*}
	so on the event $E_0$, such $\mu$'s are not feasible maximizers in the definition of $\mathsf{h}_\epsilon$, as $\mathsf{h}_\epsilon(x)\geq 0$. This means on the event $E_0$,
	\begin{align}\label{ineq:support_fcn_proj_lower_1}
	\mathsf{h}(x)\leq \mathsf{h}_{\epsilon}(x) &= \sup_{\mu \in K_n, (\sup\limits_{v \in L^\circ \cap B_m}\iprod{v}{G\mu})^2 \leq 4\pnorm{G}{\op}\epsilon}\bigg\{\iprod{x}{G\mu}-\frac{ 1}{2\epsilon}\bigg(\sup_{v \in L^\circ \cap B_m}\iprod{v}{G\mu}\bigg)^2\bigg\} \nonumber\\
	&\leq  \sup_{\mu \in K_n, (\sup\limits_{v \in L^\circ \cap B_m}\iprod{v}{G\mu})^2 \leq 4\pnorm{G}{\op}\epsilon} \iprod{x}{G\mu}.
	\end{align}
	Note that
	\begin{align*}
	S_\epsilon \equiv \bigg\{\mu \in K_n: \bigg(\sup_{v \in L^\circ \cap B_m}\iprod{v}{G\mu}\bigg)^2\leq 4\pnorm{G}{\op}\epsilon\bigg\}
	\end{align*}
	is a sequence of non-increasing sets as $\epsilon \downarrow 0$, and
	\begin{align*}
	&\bigcap_{\epsilon>0}S_\epsilon\subset \bigg\{\mu \in K_n: \bigg(\sup_{v \in L^\circ \cap B_m}\iprod{v}{G\mu}\bigg)^2=0\bigg\} = \big\{\mu \in K_n: G\mu \in L\big\},
	\end{align*}
	so using (\ref{ineq:support_fcn_proj_lower_1}) and Lemma \ref{lem:sup_set_conv}, we have on the event $E_0$, 
	\begin{align*}
	\mathsf{h}(x)\leq \liminf_{\epsilon \downarrow 0} \mathsf{h}_\epsilon(x)\leq \limsup_{\epsilon \downarrow 0} \mathsf{h}_\epsilon(x) \leq \sup_{\mu \in K_n: G\mu \in L}\iprod{x}{G\mu}= \mathsf{h}(x).
	\end{align*}
	On $E_0^c$, $\pnorm{G}{\op}=0$ so $G=0$ and $\mathsf{h}(x)=\mathsf{h}_\epsilon(x)=0$ are trivial. The claim (\ref{ineq:support_fcn_proj_lower_0}) is proven.

    \noindent (\textbf{Step 2}). Next we shall use the surrogate function $\mathsf{h}_\epsilon$ and the claim (\ref{ineq:support_fcn_proj_lower_0}) in Step 1 to prove that for any $z \in \R$,
    \begin{align}\label{ineq:support_fcn_proj_lower_2}
     \Prob\big(\mathsf{h}(x)\leq z\big)\leq 2\Prob\bigg(\sup_{\mu \in K_n}\inf_{v \in L^\circ} \Big\{-\pnorm{\mu}{}\iprod{h}{v-x}+\pnorm{v-x}{}\iprod{g}{\mu} \Big\}\leq z\bigg).
    \end{align}
	For $R>0$, define the event
	\begin{align*}
	E_1(R)\equiv \big\{\pnorm{G}{\op}\leq R\big\}\cap \big\{\pnorm{g}{}+\pnorm{h}{}\leq R\big\}.
	\end{align*}
	On the event $E_1(R)$ we may restrict the minimum over $v \in L^\circ$ to $v \in L^\circ\cap B_m(R/\epsilon)$, i.e., on the event $E_1(R)$,
	\begin{align*}
	\mathsf{h}_{\epsilon}(x)&\equiv \sup_{\mu \in K_n}\inf_{v \in L^\circ \cap B_m(R/\epsilon)} \bigg\{\iprod{x-v}{G\mu}+\frac{\epsilon}{2}\pnorm{v}{}^2\bigg\}.
	\end{align*}
	By the convex Gaussian min-max theorem (cf. Theorem \ref{thm:CGMT}-(2)), for any $z_0>z$,
	\begin{align*}
	&\Prob\big(\mathsf{h}_{\epsilon}(x)< z_0\big)-\Prob(E_1(R)^c)\\
	&\leq 2\Prob\bigg(\sup_{\mu \in K_n}\inf_{v \in L^\circ \cap B_m(R/\epsilon)} \bigg\{-\pnorm{\mu}{}\iprod{h}{v-x}+\pnorm{v-x}{}\iprod{g}{\mu}+\frac{\epsilon}{2}\pnorm{v}{}^2 \bigg\}< z_0\bigg)\\
	&\leq 2\Prob\bigg(\sup_{\mu \in K_n}\inf_{v \in L^\circ} \Big\{-\pnorm{\mu}{}\iprod{h}{v-x}+\pnorm{v-x}{}\iprod{g}{\mu} \Big\}< z_0\bigg).
	\end{align*}
	Now taking $R\uparrow \infty$ and $\epsilon \downarrow 0$, and finally $z_0\downarrow z$ proves the claim (\ref{ineq:support_fcn_proj_lower_2}) upon using (\ref{ineq:support_fcn_proj_lower_0}). 
	
	\noindent (\textbf{Step 3}). In this step we prove the following key lower bound:
	\begin{align}\label{ineq:support_fcn_proj_lower_3}
	&\sup_{\mu \in K_n}\inf_{v \in L^\circ } \Big\{ -\pnorm{\mu}{}\iprod{h}{v-x}+\pnorm{v-x}{}\iprod{g}{\mu} \Big\}\nonumber\\
	& \geq \frac{1}{2} \bigg\{\bigg(\sup_{\mu \in K\cap B_n}\iprod{g}{\mu}\bigg)^2-\bigg(\sup_{v \in L^\circ\cap B_m} \iprod{h}{v}\bigg)^2\bigg\}_+^{1/2}- \abs{\iprod{h}{x}}.
	\end{align}
	First note that
	\begin{align}\label{ineq:support_fcn_proj_lower_4}
	& \sup_{\mu \in K_n}\inf_{v \in L^\circ } \Big\{-\pnorm{\mu}{}\iprod{h}{v-x}+\pnorm{v-x}{}\iprod{g}{\mu} \Big\}\nonumber\\
	&\geq \sup_{\mu \in K_n}\inf_{v \in L^\circ } \Big\{-\pnorm{\mu}{}\iprod{h}{v}+\pnorm{v-x}{}\iprod{g}{\mu} \Big\} - \abs{\iprod{h}{x}}\nonumber\\
	& = \sup_{\mu \in K_n} \inf_{\beta \geq 0} \inf_{v \in L^\circ,\pnorm{v}{}=\beta} \bigg\{- \pnorm{\mu}{}\iprod{h}{v}+\sqrt{\beta^2-2\iprod{v}{x}+1}\cdot \iprod{g}{\mu} \bigg\}- \abs{\iprod{h}{x}}\nonumber\\
	& = \sup_{\mu \in K_n} \inf_{v \in L^\circ\cap \partial B_m}  \inf_{\beta\geq 0} \bigg\{- \beta\pnorm{\mu}{}\iprod{h}{v}+\sqrt{\beta^2+2\beta\iprod{-v}{x}+1}\cdot \iprod{g}{\mu} \bigg\}- \abs{\iprod{h}{x}}\nonumber\\
	& \equiv \sup_{\mu \in K_n} \inf_{v \in L^\circ\cap \partial B_m}  \inf_{\beta\geq 0}\mathsf{P}(\beta;\mu,v) - \abs{\iprod{h}{x}}.
	\end{align}
	We claim that any $\mu \in K_n$ such that $\iprod{g}{\mu}<0$ is not a feasible solution to $\sup_{\mu \in K_n} \inf_{v \in L^\circ\cap \partial B_m}  \inf_{\beta\geq 0}\mathsf{P}(\beta;\mu,v)$. To see this, for any such $\mu \in K_n$, we have
	\begin{align*}
	\inf_{v \in L^\circ\cap \partial B_m}  \inf_{\beta\geq 0}\mathsf{P}(\beta;\mu,v)< \inf_{v \in L^\circ\cap \partial B_m}  \inf_{\beta\geq 0} \big\{-\beta \pnorm{\mu}{}\iprod{h}{v}\big\} \leq 0,
	\end{align*}
	whereas  the cost optimum $\sup_{\mu \in K_n} \inf_{v \in L^\circ\cap \partial B_m}  \inf_{\beta\geq 0}\mathsf{P}(\beta;\mu,v)\geq 0$ due to $0 \in K_n$. This proves that 
	\begin{align}\label{ineq:support_fcn_proj_lower_5}
	\sup_{\mu \in K_n} \inf_{v \in L^\circ\cap \partial B_m}  \inf_{\beta\geq 0}\mathsf{P}(\beta;\mu,v) = \sup_{\mu \in K_n,\iprod{g}{\mu}\geq 0} \inf_{v \in L^\circ\cap \partial B_m}  \inf_{\beta\geq 0}\mathsf{P}(\beta;\mu,v).
	\end{align}
	As the derivative of $\beta \mapsto \mathsf{P}(\beta;\mu,v)$ is given by
	\begin{align*}
	\mathsf{P}'(\beta;\mu,v)& = -\pnorm{\mu}{}\iprod{h}{v}+ \iprod{g}{\mu}\cdot \frac{\beta+\iprod{-v}{x}}{ \sqrt{\beta^2+2\beta\iprod{-v}{x}+1} },
	\end{align*}
	for any $\mu \in K_n$ with $\iprod{g}{\mu}\geq 0$, the map $\beta \mapsto \mathsf{P}'(\beta;\mu,v)$  is non-decreasing. Furthermore, for such $\mu$'s and any $v \in L^\circ \cap \partial B_m, x \in L\cap \partial B_m$,
	\begin{align*}
	\mathsf{P}'(0;\mu,v)& =  -\pnorm{\mu}{}\iprod{h}{v}+ \iprod{g}{\mu}\cdot \iprod{-v}{x},\\
	\mathsf{P}'(\infty;\mu,v)
	& = -\pnorm{\mu}{}\iprod{h}{v}+ \iprod{g}{\mu}.
	\end{align*}
	Fix $\mu \in K_n$ with $\iprod{g}{\mu}\geq 0$ and 
	\begin{align*}
    \inf_{v \in L^\circ\cap  B_m} \mathsf{P}'(\infty;\mu,v) = -\pnorm{\mu}{}\sup_{v \in L^\circ\cap  B_m}\iprod{h}{v}+\iprod{g}{\mu}\geq 0.	
	\end{align*}
	We consider two cases below.
	
	\noindent \textbf{Case 1}. Suppose $v \in L^\circ \cap \partial B_m$ is such that $\mathsf{P}'(0;\mu,v)\geq 0$. Then 
	\begin{align}\label{ineq:support_fcn_proj_lower_6}
	\inf_{\substack{ v \in L^\circ\cap \partial B_m,\\ \mathsf{P}'(0;\mu,v)\geq 0}}  \inf_{\beta\geq 0}\mathsf{P}(\beta;\mu,v) = \inf_{\substack{ v \in L^\circ\cap \partial B_m,\\ \mathsf{P}'(0;\mu,v)\geq 0}}  \mathsf{P}(0;\mu,v) = \iprod{g}{\mu}.
	\end{align}
	\noindent \textbf{Case 2}. Suppose $v \in L^\circ \cap \partial B_m$ is such that $\mathsf{P}'(0;\mu,v)\leq 0$ and $\mathsf{P}'(\infty;\mu,v)\geq 0$. Then $\iprod{h}{v}\geq 0$ and
	\begin{align}\label{ineq:support_fcn_proj_lower_7}
	&\inf_{\substack{ v \in L^\circ\cap \partial B_m,\\ \mathsf{P}'(0;\mu,v)\leq 0, \mathsf{P}'(\infty;\mu,v)\geq 0 }}  \inf_{\beta\geq 0}\mathsf{P}(\beta;\mu,v) \nonumber \\
	& =\inf_{\substack{ v \in L^\circ\cap \partial B_m,\\ \mathsf{P}'(0;\mu,v)\leq 0, \mathsf{P}'(\infty;\mu,v)\geq 0 }} \bigg\{\sqrt{\big(1-\iprod{-v}{x}^2\big)\big(\iprod{g}{\mu}^2-\pnorm{\mu}{}^2\iprod{h}{v}^2\big)}  +\pnorm{\mu}{}\iprod{h}{v}\iprod{-v}{x}\bigg\}\nonumber\\
	&\geq \inf_{\substack{ v \in L^\circ\cap \partial B_m,\\ \mathsf{P}'(0;\mu,v)\leq 0, \mathsf{P}'(\infty;\mu,v)\geq 0 }} \bigg\{\sqrt{\big(1-\iprod{-v}{x}^2\big)\big(\iprod{g}{\mu}^2-\pnorm{\mu}{}^2\iprod{h}{v}^2\big)}  +\iprod{g}{\mu}\iprod{-v}{x}^2\bigg\}\nonumber\\
	&\geq \inf_{\substack{ v \in L^\circ\cap \partial B_m,\\ \mathsf{P}'(0;\mu,v)\leq 0, \mathsf{P}'(\infty;\mu,v)\geq 0 }}\inf_{\alpha \in [0,1]} \bigg\{\sqrt{\big(1-\alpha\big)\big(\iprod{g}{\mu}^2-\pnorm{\mu}{}^2\iprod{h}{v}^2\big)}  +\iprod{g}{\mu}\alpha\bigg\}\nonumber\\
	&\stackrel{(\ast)}{\geq} \frac{1}{2} \inf_{v \in L^\circ\cap \partial B_m, \iprod{h}{v}\geq 0} \min\bigg\{\sqrt{\big(\iprod{g}{\mu}^2-\pnorm{\mu}{}^2\iprod{h}{v}^2\big)_+}, \iprod{g}{\mu} \bigg\}\nonumber\\
	& = \frac{1}{2} \cdot \bigg\{\iprod{g}{\mu}^2-\pnorm{\mu}{}^2\bigg(\sup_{v \in L^\circ\cap  B_m} \iprod{h}{v}\bigg)^2\bigg\}_+^{1/2}.
	\end{align}
	Here the first identity follows by Lemma \ref{lem:fcn_P} below. The inequality in $(\ast)$ follows by the following simple lower bound: for any $M_1,M_2\geq 0$,
	\begin{align*}
	&\inf_{\alpha \in [0,1]} \big\{\sqrt{1-\alpha} M_1+\alpha M_2\big\}\\
	& = \min\bigg\{\inf_{\alpha \in [0,1/2]} \big(\sqrt{1-\alpha} M_1+\alpha M_2\big), \inf_{\alpha \in [1/2,1]} \big(\sqrt{1-\alpha} M_1+\alpha M_2\big) \bigg\}\\
	&\geq \min \big\{M_1/\sqrt{2}, M_2/2\big\} \geq (M_1\wedge M_2)/2.
	\end{align*}
	The last identity in (\ref{ineq:support_fcn_proj_lower_7}) uses the fact that 
	\begin{align*}
	\sup_{v \in L^\circ \cap \partial B_m, \iprod{h}{v}\geq 0} \iprod{h}{v}^2 &= \bigg(\sup_{v \in L^\circ \cap \partial B_m} \big(0\vee \iprod{h}{v}\big)\bigg)^2  = \bigg(\sup_{v \in L^\circ \cap  B_m} \iprod{h}{v}\bigg)^2.
	\end{align*} 
	Combining (\ref{ineq:support_fcn_proj_lower_4})-(\ref{ineq:support_fcn_proj_lower_7}), we have
	\begin{align*}
	&\sup_{\mu \in K_n}\inf_{v \in L^\circ } \Big\{ -\pnorm{\mu}{}\iprod{h}{v-x}+\pnorm{v-x}{}\iprod{g}{\mu} \Big\}\nonumber\\
	&\geq  \sup_{\substack{\mu \in K_n,\iprod{g}{\mu}\geq 0,\\ \inf_{v \in L^\circ\cap B_m} \mathsf{P}'(\infty;\mu,v)\geq 0}} \inf_{v \in L^\circ\cap \partial B_m}  \inf_{\beta\geq 0}\mathsf{P}(\beta;\mu,v)- \abs{\iprod{h}{x}}\nonumber\\
	& \geq \sup_{\substack{\mu \in K_n,\iprod{g}{\mu}\geq 0,\\ \iprod{g}{\mu}\geq \pnorm{\mu}{}\sup\limits_{v \in L^\circ\cap B_m}\iprod{h}{v}}} \min\bigg\{\iprod{g}{\mu},\frac{1}{2} \bigg[\iprod{g}{\mu}^2-\pnorm{\mu}{}^2\bigg(\sup_{v \in L^\circ\cap B_m} \iprod{h}{v}\bigg)^2\bigg]_+^{1/2}\bigg\}- \abs{\iprod{h}{x}}\nonumber\\
	&\geq \frac{1}{2} \bigg(\sup_{\substack{\mu \in K_n, \iprod{g}{\mu}\geq 0 } }\bigg\{\iprod{g}{\mu}^2-\pnorm{\mu}{}^2\bigg(\sup_{v \in L^\circ\cap B_m} \iprod{h}{v}\bigg)^2\bigg\}_+\bigg)^{1/2}- \abs{\iprod{h}{x}}\nonumber\\
	& = \frac{1}{2} \bigg\{\bigg(\sup_{\mu \in K\cap B_n}\iprod{g}{\mu}\bigg)^2-\bigg(\sup_{v \in L^\circ\cap B_m} \iprod{h}{v}\bigg)^2\bigg\}_+^{1/2}- \abs{\iprod{h}{x}}.
	\end{align*}
	Here the last equality follows as for any $M\geq 0$,
	\begin{align*}
	&\sup_{\substack{\mu \in K_n, \iprod{g}{\mu}\geq 0 } }\big\{\iprod{g}{\mu}^2-\pnorm{\mu}{}^2\cdot M\big\}_+= \sup_{0<\beta\leq 1} \beta^2\cdot \bigg\{\sup_{\mu \in K\cap \partial B_n,\iprod{g}{\mu}\geq 0} \iprod{g}{\mu}^2-M\bigg\}_+\\
	& = \bigg\{\bigg(\sup_{\mu \in K\cap \partial B_n} \big(0\vee \iprod{g}{\mu}\big)\bigg)^2-M\bigg\}_+ = \bigg\{\bigg(\sup_{\mu \in K\cap B_n}\iprod{g}{\mu}\bigg)^2-M\bigg\}_+.
	\end{align*}
	This proves the claimed inequality (\ref{ineq:support_fcn_proj_lower_3}).
	
	\noindent (\textbf{Step 4}).  We now give a probabilistic lower bound for the right hand side of (\ref{ineq:support_fcn_proj_lower_3}). By Gaussian concentration as in Proposition \ref{prop:gaussian_conc_generic}, there exists some universal constant $C_0>0$ such that for $t\geq 1$, on an event $E_0(t)$ with probability at least $1-e^{-t}$,
	\begin{align*}
	\bigabs{\sup_{\mu \in K\cap B_n}\iprod{g}{\mu}-\gw(K\cap B_n)}\vee \bigabs{\sup_{v \in L^\circ\cap B_m} \iprod{h}{v}-\gw(L^\circ\cap B_m)}\leq C_0\sqrt{t}.
	\end{align*}
	Consequently, on the event $E_0(t)$,
    \begin{align*}
    &\bigg\{\bigg(\sup_{\mu \in K\cap B_n}\iprod{g}{\mu}\bigg)^2-\bigg(\sup_{v \in L^\circ\cap B_m} \iprod{h}{v}\bigg)^2\bigg\}_+^{1/2}\\
    & = \bigg(\sup_{\mu \in K\cap B_n}\iprod{g}{\mu}-\sup_{v \in L^\circ\cap B_m} \iprod{h}{v}\bigg)_+^{1/2}\cdot \bigg(\sup_{\mu \in K\cap B_n}\iprod{g}{\mu}+\sup_{v \in L^\circ\cap B_m} \iprod{h}{v}\bigg)_+^{1/2}\\
    & \geq \Big(\gw(K\cap B_n)-\gw(L^\circ\cap B_m)-C_0\sqrt{t}\Big)_+^{1/2}\cdot \Big(\gw(K\cap B_n)+\gw(L^\circ\cap B_m)-C_0\sqrt{t}\Big)_+^{1/2}.
    \end{align*}
	So if $\gw(K\cap B_n)\geq \gw(L^\circ\cap B_m)+2C_0\sqrt{t}$, on the event $E_0(t)$, 
	\begin{align}
	&\bigg\{\bigg(\sup_{\mu \in K\cap B_n}\iprod{g}{\mu}\bigg)^2-\bigg(\sup_{v \in L^\circ\cap B_m} \iprod{h}{v}\bigg)^2\bigg\}_+^{1/2}\nonumber\\
	&\geq \big(C_0\sqrt{t}\big)^{1/2}\cdot \big(\gw(L^\circ\cap B_m)+C_0\sqrt{t}\big)_+^{1/2} \geq C_0\sqrt{t}. 
	\end{align}
	Using the claimed inequality (\ref{ineq:support_fcn_proj_lower_3}), there exists some universal constant $C>0$ such that if $\gw(K\cap B_n)\geq \gw(L^\circ\cap B_m)+C\sqrt{t}$, on the event $E_0(t)$, 
	\begin{align*}
	&\sup_{\mu \in K_n}\inf_{v \in L^\circ } \Big\{ -\pnorm{\mu}{}\iprod{h}{v-x}+\pnorm{v-x}{}\iprod{g}{\mu} \Big\} \geq C^{-1}\sqrt{t} - \abs{\iprod{h}{x}}.
	\end{align*}
	Combined with the claimed inequality (\ref{ineq:support_fcn_proj_lower_2}) for $z=0$, for any $t\geq 1$, if $\gw(K\cap B_n)\geq \gw(L^\circ\cap B_m)+C\sqrt{t}$,
	\begin{align*}
	\Prob\big(\mathsf{h}(x)\leq 0\big)&\leq 2\Prob\bigg(\sup_{\mu \in K_n}\inf_{v \in L^\circ} \Big\{-\pnorm{\mu}{}\iprod{h}{v-x}+\pnorm{v-x}{}\iprod{g}{\mu} \Big\}\leq 0\bigg)\\
	&\leq 2\Prob\big(\abs{\iprod{h}{x}}>\sqrt{t}/C\big)+2\Prob(E_0(t)^c)\leq C e^{-t/C}.
	\end{align*}
	The claim now follows by adjusting constants and using Proposition \ref{prop:stat_dim}-(4). 
\end{proof}

The proof of Proposition \ref{prop:support_fcn_proj_lower} above makes use of the following lemma. 

\begin{lemma}\label{lem:fcn_P}
Let $a=(a_1,a_2,a_3)^\top\in \R_{\geq 0}^3$ with $a_2 \in [0,1]$, and let $\mathsf{P}_a: \R_{\geq 0} \to \R$ be defined by
\begin{align*}
\mathsf{P}_{a}(\beta) \equiv - a_1\cdot \beta+a_3\cdot\sqrt{\beta^2+2a_2 \beta+1}.
\end{align*}
Then its derivative
\begin{align*}
\mathsf{P}_a'(\beta)& = -a_1+a_3\cdot \frac{\beta+a_2}{ \sqrt{\beta^2+2a_2\beta+1}}
\end{align*}
is non-decreasing on $[0,\infty)$, and
\begin{align*}
\inf_{\beta\geq 0} \mathsf{P}_a(\beta)=
\begin{cases}
a_3, & a_1< a_2a_3,\\
\sqrt{(a_3^2-a_1^2)(1-a_2^2)}+a_1a_2, & a_2a_3\leq a_1\leq a_3,\\
-\infty, & a_1>a_3.
\end{cases}
\end{align*}
\end{lemma}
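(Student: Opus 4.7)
The plan is to prove convexity of $\mathsf{P}_a$ on $[0,\infty)$ via a direct second-derivative computation, which immediately yields monotonicity of $\mathsf{P}_a'$ and reduces the infimum calculation to a case analysis based on the signs of the two boundary derivatives. Writing $\mathsf{P}_a(\beta) = -a_1\beta + a_3\sqrt{(\beta+a_2)^2 + (1-a_2^2)}$, I would differentiate twice to obtain
\[
\mathsf{P}_a''(\beta) = \frac{a_3(1-a_2^2)}{(\beta^2+2a_2\beta+1)^{3/2}} \geq 0,
\]
where non-negativity uses $a_3 \geq 0$ and $a_2 \in [0,1]$. This proves the monotonicity claim on $\mathsf{P}_a'$ in the lemma statement.

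With convexity in hand, I would compute the two boundary values $\mathsf{P}_a'(0) = -a_1 + a_2 a_3$ and $\lim_{\beta\to\infty}\mathsf{P}_a'(\beta) = -a_1 + a_3$ and split into three regimes. If $a_1 < a_2 a_3$, then $\mathsf{P}_a'(0)>0$ combined with monotonicity of $\mathsf{P}_a'$ shows $\mathsf{P}_a$ is strictly increasing, so $\inf_{\beta\geq 0}\mathsf{P}_a(\beta) = \mathsf{P}_a(0) = a_3$. If $a_1 > a_3$, monotonicity forces $\mathsf{P}_a'(\beta) \leq a_3 - a_1 < 0$ throughout, so $\mathsf{P}_a$ is strictly decreasing; combined with the asymptotic expansion $\mathsf{P}_a(\beta) = (a_3 - a_1)\beta + a_2 a_3 + O(1/\beta)$ as $\beta\to\infty$, this forces the infimum to be $-\infty$. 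In the intermediate regime $a_2 a_3 \leq a_1 \leq a_3$, convexity ensures the infimum is attained at a unique critical point $\beta^* \geq 0$ solving $\mathsf{P}_a'(\beta^*) = 0$, since the derivative is non-positive at $0$ and non-negative at infinity.

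The only step requiring nontrivial computation is the evaluation of $\mathsf{P}_a(\beta^*)$ in the intermediate regime. Squaring $a_1\sqrt{\beta^{*2}+2a_2\beta^*+1} = a_3(\beta^*+a_2)$ yields
\[
\beta^{*2}+2a_2\beta^*+1 = \frac{a_3^2(1-a_2^2)}{a_3^2-a_1^2}, \qquad \beta^*+a_2 = \frac{a_1\sqrt{1-a_2^2}}{\sqrt{a_3^2-a_1^2}},
\]
and substituting into $\mathsf{P}_a(\beta^*) = -a_1\beta^* + a_3\sqrt{\beta^{*2}+2a_2\beta^*+1}$ collapses, after cancellation of the $a_1 a_2$ term, to the claimed value $\sqrt{(a_3^2-a_1^2)(1-a_2^2)} + a_1 a_2$. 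The boundary cases $a_1 = a_2 a_3$ and $a_1 = a_3$ can be handled by continuity in $a_1$, since direct substitution shows the three candidate formulas agree at the respective transition points (e.g.\ at $a_1 = a_2 a_3$ the intermediate formula evaluates to $a_3(1-a_2^2) + a_2^2 a_3 = a_3$). There is no genuine obstacle here; the result is a routine if slightly bookkeeping-heavy calculus exercise that is invoked in a single line of the main argument.
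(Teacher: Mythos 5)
Your proposal is correct and follows essentially the same route as the paper: establish monotonicity of $\mathsf{P}_a'$ (you via the explicit second derivative, the paper by inspection of $\mathsf{P}_a'$ using $a_2\in[0,1]$), split into three cases according to the signs of $\mathsf{P}_a'(0)$ and $\mathsf{P}_a'(\infty)$, and solve $\mathsf{P}_a'(\beta^*)=0$ by squaring in the intermediate regime. The only cosmetic difference is that the paper absorbs the boundary case $a_1=a_3$ into its third case by computing $\lim_{\beta\uparrow\infty}\mathsf{P}_a(\beta)=a_1a_2$ directly, whereas you invoke continuity plus agreement of the candidate formulas; both are fine.
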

\begin{proof}
The derivative $\mathsf{P}_a'$ may be computed directly. As $a_2 \in [0,1]$, $\mathsf{P}_a'$ is non-decreasing, and we have
\begin{align*}
\mathsf{P}_a'(0) = a_2a_3-a_1, \, \lim_{\beta \uparrow \infty}\mathsf{P}_a'(\beta) = a_3-a_1. 
\end{align*}
\noindent \textbf{Case 1}. Suppose $\mathsf{P}_a'(0)\geq 0$; equivalently $a_2a_3\geq a_1$. Then $\mathsf{P}_a'\geq 0$ globally on $[0,\infty)$ and therefore $\mathsf{P}_a$ is non-decreasing. This means 
\begin{align*}
\inf_{\beta\geq 0} \mathsf{P}_a(\beta) = \mathsf{P}_a(0) = a_3.
\end{align*}
\noindent \textbf{Case 2}. Suppose $\mathsf{P}_a'(0)< 0$ and $\mathsf{P}_a'(\infty)> 0$; equivalently $a_2a_3<a_1<a_3$. To solve the equation $\mathsf{P}_a'(\beta_\ast)=0$ over $\beta_\ast \geq 0$, it is equivalently to solve 
\begin{align*}
&\beta_\ast^2+2a_2 \beta_\ast +\frac{a_2^2a_3^2-a_1^2}{a_3^2-a_1^2} = 0, \beta_\ast \geq 0\, \Leftrightarrow \, \beta_\ast = -a_2 + \sqrt{ \frac{a_1^2(1-a_2^2)}{a_3^2-a_1^2}  }.
\end{align*}
Consequently, with some calculations,
\begin{align*}
\inf_{\beta\geq 0} \mathsf{P}_a(\beta) = \mathsf{P}_a(\beta_\ast)=\sqrt{(a_3^2-a_1^2)(1-a_2^2)}+a_1a_2. 
\end{align*}
\noindent \textbf{Case 3}. Suppose $\mathsf{P}_a'(\infty)\leq 0$; equivalently $a_1\geq a_3$. Then $\mathsf{P}_a'\leq 0$ globally on $[0,\infty)$ so $\mathsf{P}_a$ is non-increasing on $[0,\infty)$. This means
\begin{align*}
\inf_{\beta\geq 0} \mathsf{P}_a(\beta) &= \lim_{\beta \uparrow \infty} \mathsf{P}_a(\beta) \\
&= \lim_{\beta \uparrow \infty} \beta \bigg(-a_1+ a_3\sqrt{1+\frac{2a_2}{\beta}+\frac{1}{\beta^2} }\bigg)=
\begin{cases}
a_1a_2, & a_1=a_3,\\
-\infty, & a_1>a_3.
\end{cases}
\end{align*}
The claim follows by combining the three cases above. 
\end{proof}

\subsection{Completion of Theorem \ref{thm:approx_kinematics}}

\begin{proof}[Proof of Theorem \ref{thm:approx_kinematics}]
When $K\neq \{0\}, L\notin \{\{0\},\R^m\}$, the claimed kinematic formulae follows by combining Propositions \ref{prop:support_fcn_proj_upper} and \ref{prop:support_fcn_proj_lower}. If $K\neq \{0\}$ and $L=\R^m$, then $\delta(L)=m$ and therefore claim (2) holds trivially. 
\end{proof}

\subsection{Proof of Corollary \ref{cor:gordon_escape}}

\begin{lemma}
Fix an integer $\ell \in [n]$. Let $L\subset \R^n$ be any fixed subspace with $\dim(L)=\ell$. Then $O_nL\equald G_n L \equald \nullsp(G_{n-\ell,n})$, where $O_n$ is distributed according to the Haar measure on $\mathrm{O}(n)$, and the matrices $G_n\in \R^{n\times n}$ and $G_{n-\ell,n}\in \R^{(n-\ell)\times n}$ both contain i.i.d. $\mathcal{N}(0,1)$ entries.
\end{lemma}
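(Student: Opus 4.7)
The plan is to show that all three random subspaces $O_n L$, $G_n L$, and $\nullsp(G_{n-\ell,n})$ induce the same probability measure on the Grassmannian $\mathrm{Gr}(n,\ell)$ of $\ell$-dimensional linear subspaces of $\R^n$. Since $\mathrm{Gr}(n,\ell)$ carries a \emph{unique} $\mathrm{O}(n)$-invariant Borel probability measure (the pushforward of Haar measure under $O\mapsto O L$), it suffices to (i) verify that each of the three random subspaces is a.s.\ $\ell$-dimensional, and (ii) check that the law of each is invariant under the natural $\mathrm{O}(n)$-action $V\mapsto OV$.

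For $O_n L$, invariance is immediate from left-invariance of Haar measure: for any fixed $O\in\mathrm{O}(n)$, $O\cdot O_n L \equald O_nL$, and $\dim(O_nL)=\ell$ deterministically. For $G_nL$, the key observation is the rotational invariance $OG_n\equald G_n$ for any fixed $O\in\mathrm{O}(n)$, which follows directly from the i.i.d.\ $\mathcal{N}(0,1)$ structure of the entries. This yields $O\cdot G_nL\equald G_nL$. Moreover, $G_n$ is a.s.\ invertible (its entries being absolutely continuous), so $\dim(G_nL)=\ell$ a.s. For $\nullsp(G_{n-\ell,n})$, one uses the right-invariance $G_{n-\ell,n}O^\top\equald G_{n-\ell,n}$ together with the identity $\nullsp(G_{n-\ell,n}O^\top)=O\cdot\nullsp(G_{n-\ell,n})$ to conclude $O\cdot\nullsp(G_{n-\ell,n})\equald\nullsp(G_{n-\ell,n})$. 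A.s.\ $G_{n-\ell,n}$ has full row rank $n-\ell$, so the null space is a.s.\ $\ell$-dimensional.

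Having established $\mathrm{O}(n)$-invariance and the correct dimension for each, the three induced laws on $\mathrm{Gr}(n,\ell)$ coincide with the unique $\mathrm{O}(n)$-invariant probability measure, hence with each other, finishing the proof. I do not anticipate a substantial obstacle; the only point requiring care is the measurability / a.s.\ dimension statement, which follows from standard facts on Gaussian matrices.
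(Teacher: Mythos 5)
Your argument is correct and is essentially the paper's own proof: both rest on the uniqueness of the $\mathrm{O}(n)$-invariant probability measure on the Grassmannian and then verify invariance of the laws of $O_nL$, $G_nL$, and $\nullsp(G_{n-\ell,n})$. You merely spell out the details (rotational invariance of $G_n$, a.s. invertibility/full row rank) that the paper leaves as "easy to see."
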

\begin{proof}
The key fact to note is that there is a unique probability measure on the Grassmannian $\mathscr{G}_\ell(\R^n)$ (with the usual metric induced by the Hausdorff distance between unit balls of subspaces) that is invariant under the action of the orthogonal group $\mathrm{O}(n)$, cf. \cite[Theorem 1.2]{eaton1989group}. It is easy to see that all $O_n L$ and $G_nL$ and $\nullsp(G_{n-\ell,n})$ are invariant under $\mathrm{O}(n)$. 
\end{proof}

\begin{proof}[Proof of Corollary \ref{cor:gordon_escape}]
Using the lemma above, we may realize $O_n L\equald G_n L$ where $G_n\in \R^{n\times n}$ contains i.i.d. $\mathcal{N}(0,1)$ on its entries. Now apply Theorem \ref{thm:approx_kinematics} to conclude. 
\end{proof}

\section{Proofs of Theorems \ref{thm:approx_kinematics_preimage} and \ref{thm:generalized_gordon}}\label{section:proof_generalized_kinetic}

\subsection{Proof of Theorem \ref{thm:approx_kinematics_preimage} }
The following proposition will be used crucially in both the proof of  Theorem \ref{thm:approx_kinematics_preimage}, and that of Theorem \ref{thm:conic_program} to be detailed ahead. Recall the notation $G^{-1}L$ defined in (\ref{def:preimage}).

\begin{proposition}\label{prop:support_fcn_cgmt}
	Suppose $K\subset B_n$ is a compact convex set with $0 \in K$, and $L\subset \R^m$ is a closed convex cone. Then for any $x \in \partial B_n, z \in \R$,
	\begin{align}\label{ineq:support_fcn_cgmt_01}
	\Prob\Big( \mathsf{h}_{K\cap G^{-1}L}(x)\geq z\Big)&\leq 2 \Prob\bigg(\sup_{\mu \in K, \iprod{g}{\mu}\geq \pnorm{\mu}{} \sup\limits_{v \in L^\circ \cap \partial B_m}\iprod{h}{v} } \iprod{x}{\mu}\geq z\bigg),\nonumber\\
	\Prob\big(\mathsf{h}_{K\cap G^{-1}L}(x)< z\big)&\leq 2 \Prob\bigg(\sup_{\mu \in K, \iprod{g}{\mu}\geq \pnorm{\mu}{} \sup\limits_{v \in L^\circ \cap \partial B_m}\iprod{h}{v} } \iprod{x}{\mu}< z\bigg).
	\end{align}
	The first inequality also holds for its uniform version in $x \in \partial B_n$, i.e., for any $z \in \R$,
	\begin{align}\label{ineq:support_fcn_cgmt_02}
	\Prob\Big( \sup_{x \in \partial B_n}\mathsf{h}_{K\cap G^{-1}L }(x)\geq z\Big)&\leq 2 \Prob\bigg(\sup_{x \in \partial B_n, \mu \in K, \iprod{g}{\mu}\geq \pnorm{\mu}{} \sup\limits_{v \in L^\circ \cap \partial B_m}\iprod{h}{v} } \iprod{x}{\mu}\geq z\bigg).
	\end{align}
\end{proposition}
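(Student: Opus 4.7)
The plan is to recast the support function as a max-min program amenable to the convex Gaussian min-max theorem (Theorem \ref{thm:CGMT}). The first step uses Lemma \ref{lem:cone_polarity} applied to the cone $L$: since $\inf_{v\in -L^\circ}\iprod{v}{G\mu}$ equals $0$ when $G\mu\in L$ and $-\infty$ otherwise, one obtains the representation
\[
\mathsf{h}_{K\cap G^{-1}L}(x)=\sup_{\mu\in K}\inf_{v\in -L^\circ}\big\{\iprod{x}{\mu}+\iprod{v}{G\mu}\big\}.
\]
Since $-L^\circ$ is unbounded, I would truncate to $(-L^\circ)\cap B_m(R)$, yielding a truncated primal process $\Phi^{\mathrm{p}}_R(G)$. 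The integrand $\iprod{x}{\mu}+\iprod{v}{G\mu}$ is bilinear, hence concave-convex in $(\mu,v)$, and both feasible sets are compact convex, so both halves of Theorem \ref{thm:CGMT} apply. After identifying $u_1=\mu$, $v_1=v$, and $Q(\mu,v)=\iprod{x}{\mu}$ (using $v^\top G\mu=\mu^\top G^\top v$ and the fact that $G^\top$ also has i.i.d. Gaussian entries), the corresponding auxiliary process reads
\[
\Phi^{\mathrm{a}}_R(g,h)=\sup_{\mu\in K}\inf_{v\in -L^\circ\cap B_m(R)}\big\{\iprod{x}{\mu}+\pnorm{v}{}\iprod{g}{\mu}+\pnorm{\mu}{}\iprod{h}{v}\big\}.
\]

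The second step is to reduce this auxiliary problem into the form appearing on the right side of (\ref{ineq:support_fcn_cgmt_01}). Parametrize $v=\beta w$ with $w\in(-L^\circ)\cap\partial B_m$ and $\beta\in[0,R]$, and use the identity $\inf\limits_{w\in (-L^\circ)\cap\partial B_m}\iprod{h}{w}=-\sup\limits_{v\in L^\circ\cap\partial B_m}\iprod{h}{v}$ (via $w\mapsto -w$). The inner infimum then collapses to $\min\{0,R\cdot c(\mu)\}$, where $c(\mu)\equiv \iprod{g}{\mu}-\pnorm{\mu}{}\sup\limits_{v\in L^\circ\cap\partial B_m}\iprod{h}{v}$. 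Consequently $\Phi^{\mathrm{a}}_R(g,h)$ is non-increasing in $R$ and, as $R\uparrow\infty$, any $\mu$ with $c(\mu)<0$ has contribution diverging to $-\infty$ and is eliminated from the outer supremum, leaving
\[
\Phi^{\mathrm{a}}_R(g,h)\downarrow \sup_{\mu\in K,\ \iprod{g}{\mu}\geq \pnorm{\mu}{}\sup\limits_{v\in L^\circ\cap\partial B_m}\iprod{h}{v}}\iprod{x}{\mu}.
\]
A parallel argument, invoking Lemma \ref{lem:cone_polarity} once more to identify the limit, gives $\Phi^{\mathrm{p}}_R(G)\downarrow \mathsf{h}_{K\cap G^{-1}L}(x)$ pointwise in $G$.

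Finally, both inequalities in (\ref{ineq:support_fcn_cgmt_01}) follow by applying the two parts of Theorem \ref{thm:CGMT} at each finite $R$ and invoking monotone continuity of probability, using $\cap_R\{\Phi^{\mathrm{a}}_R\geq z\}=\{\Phi^{\mathrm{a}}_\infty\geq z\}$ for the $\{\geq z\}$ bound and $\cup_R\{\Phi^{\mathrm{a}}_R<z\}=\{\Phi^{\mathrm{a}}_\infty<z\}$ for the $\{<z\}$ bound, together with the analogous identities for $\Phi^{\mathrm{p}}_R$. The uniform version (\ref{ineq:support_fcn_cgmt_02}) is obtained by promoting $x$ into the outer maximization variable over $\partial B_n\times K$; this joint set is compact but not convex, so only Theorem \ref{thm:CGMT}-(1) (which requires only compactness) applies, and this is exactly what is needed for the $\{\geq z\}$ bound. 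The principal technical obstacle I anticipate is the justification of the pointwise limit $\Phi^{\mathrm{p}}_R(G)\downarrow \mathsf{h}_{K\cap G^{-1}L}(x)$, which requires reconciling the outer supremum over the compact set $K$ with the inner infimum's divergence to $-\infty$ for points $\mu$ with $G\mu\notin L$; this is handled either via a surrogate regularization in the spirit of Step 1 of the proof of Proposition \ref{prop:support_fcn_proj_lower}, or by conditioning on $\{\pnorm{G}{\op}\leq R_0\}$ as done in the proof of Proposition \ref{prop:support_fcn_proj_upper}.
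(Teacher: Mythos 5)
Your proposal is correct and follows essentially the same route as the paper's proof: the polar-cone max-min representation of $\mathsf{h}_{K\cap G^{-1}L}(x)$ via Lemma \ref{lem:cone_polarity}, both directions of Theorem \ref{thm:CGMT}, and the collapse of the auxiliary problem along rays $v=\beta w$ to the constraint $\iprod{g}{\mu}\geq \pnorm{\mu}{}\sup_{v\in L^\circ\cap\partial B_m}\iprod{h}{v}$. The only difference is the localization device — you use a hard truncation $v\in B_m(R)$ with monotone limits in $R$ (the limit $\Phi^{\mathrm{p}}_R\downarrow \mathsf{h}$ you flag does go through unconditionally by the Dini-type argument of Lemma \ref{lem:sup_set_conv}, since $\Phi^{\mathrm{p}}_R(G)=\sup_{\mu\in K}\{\iprod{x}{\mu}-R\sup_{v\in L^\circ\cap B_m}\iprod{v}{G\mu}\}$), whereas the paper uses the quadratic surrogate $\tfrac{\epsilon}{2}\pnorm{v}{}^2$ together with conditioning on $\pnorm{G}{\op}\leq R$; this is a cosmetic rather than substantive divergence.
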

\begin{proof}
	We shall write $\mathsf{h}\equiv \mathsf{h}_{K\cap G^{-1}L }$ for notational convenience in the proof. The method of proof is similar in spirit to that of Proposition \ref{prop:support_fcn_proj_lower}. First, by Lemma \ref{lem:cone_polarity}, we may rewrite $\mathsf{h}(x)$ as
	\begin{align*}
	\mathsf{h}(x)& = \sup_{\mu \in K}\big\{\iprod{x}{\mu}: G\mu \in L\big\} = \sup_{\mu \in K}\inf_{v\in L^\circ} \big\{\iprod{x}{\mu}-\iprod{v}{G\mu}\big\}.
	\end{align*}
	Now let for $\epsilon>0$
	\begin{align*}
	\mathsf{h}_{\epsilon}(x)&\equiv \sup_{\mu \in K}\inf_{v \in L^\circ} \bigg\{\iprod{x}{\mu}-\iprod{v}{G\mu}+\frac{\epsilon}{2}\pnorm{v}{}^2\bigg\}=\sup_{\mu \in K}\bigg\{\iprod{x}{\mu}-\frac{ 1}{2\epsilon}\bigg(\sup_{v \in L^\circ \cap B_m}\iprod{v}{G\mu}\bigg)^2\bigg\}.
	\end{align*}
	On the event 
	\begin{align}\label{ineq:gordon_proof_event}
	E(R)\equiv \big\{\pnorm{G}{\op}\leq R\big\}\cap \big\{\pnorm{g}{}+\pnorm{h}{}\leq R\big\},
	\end{align}
	we may restrict the range of the minimum over $v \in L^\circ$ to $v \in B_m(R/\epsilon)$ (uniformly in $x\in \partial B_n$). In other words, on the event $E(R)$, 
	\begin{align}\label{ineq:support_fcn_1_1}
	\mathsf{h}_{\epsilon}(x)&\equiv \sup_{\mu \in K}\inf_{v \in L^\circ \cap B_m(R/\epsilon)} \bigg\{\iprod{x}{\mu}-\iprod{v}{G\mu}+\frac{\epsilon}{2}\pnorm{v}{}^2\bigg\}.
	\end{align}
	Clearly $\mathsf{h}(x)\leq \mathsf{h}_{\epsilon}(x)$ for any $\epsilon>0$. This means for any $z \in \R$,
	\begin{align*}
	&\Prob\big(\mathsf{h}(x)\geq z\big)\leq \Prob\big(\mathsf{h}_\epsilon(x)\geq z\big)\\
	&\leq \Prob\bigg( \sup_{\mu \in K}\inf_{v \in L^\circ \cap B_m(R/\epsilon)} \bigg\{\iprod{x}{\mu}-\iprod{v}{G\mu}+\frac{\epsilon}{2}\pnorm{v}{}^2\bigg\}\geq z  \bigg) + \Prob\big(E(R)^c\big).
	\end{align*}
	By the one-sided Gaussian min-max theorem (cf. Theorem \ref{thm:CGMT}-(1)), we have
	\begin{align}\label{ineq:support_fcn_2}
	\Prob\big(\mathsf{h}(x)\geq z\big)&\leq 2\Prob\bigg( \sup_{\mu \in K}\inf_{v \in L^\circ\cap B_m(R/\epsilon)} \bigg\{\iprod{x}{\mu}-\pnorm{\mu}{}\iprod{h}{v}+\pnorm{v}{}\iprod{g}{\mu}+\frac{\epsilon}{2}\pnorm{v}{}^2\bigg\}\geq z  \bigg) \nonumber\\
	&\qquad + \Prob\big(E(R)^c\big).
	\end{align}
	On the other hand, as $0 \in K$, on the event $E(R)$,
	\begin{align}\label{ineq:support_fcn_3}
	0&\leq \sup_{\mu \in K}\inf_{v \in L^\circ\cap  B_m(R/\epsilon)} \bigg\{\iprod{x}{\mu}-\pnorm{\mu}{}\iprod{h}{v}+\pnorm{v}{}\iprod{g}{\mu}+\frac{\epsilon}{2}\pnorm{v}{}^2\bigg\}\nonumber\\
	& = \sup_{\mu \in K}\inf_{\beta \in [0,R/\epsilon]} \bigg\{\iprod{x}{\mu}+\beta\Big(\iprod{g}{\mu}-\pnorm{\mu}{}\sup_{v \in L^\circ \cap \partial B_m}\iprod{h}{v}\Big)+\frac{\epsilon}{2}\beta^2\bigg\}\nonumber\\
	& = \sup_{\mu \in K} \bigg\{\iprod{x}{\mu}-\frac{1 }{2\epsilon} \bigg(\iprod{g}{\mu}-\pnorm{\mu}{}\sup_{v \in L^\circ \cap \partial B_m}\iprod{h}{v}\bigg)_-^2\bigg\}.
	\end{align}
	Now as for any $\mu \in K$ such that $\big(\iprod{g}{\mu}-\pnorm{\mu}{}\sup\limits_{v \in L^\circ \cap \partial B_m}\iprod{h}{v}\big)_-^2>2\epsilon$, the right hand side of the above display is $<0$, so such $\mu$'s are not feasible. This means that on the event $E(R)$, 
	\begin{align}\label{ineq:support_fcn_4}
	& \sup_{\mu \in K}\inf_{v \in L^\circ\cap B_m(R/\epsilon)} \bigg\{\iprod{x}{\mu}-\pnorm{\mu}{}\iprod{h}{v}+\pnorm{v}{}\iprod{g}{\mu}+\frac{\epsilon}{2}\pnorm{v}{}^2\bigg\}\nonumber\\
	& \leq \sup_{\substack{\mu \in K, \big(\iprod{g}{\mu}-\pnorm{\mu}{} \sup\limits_{v \in L^\circ \cap \partial B_m}\iprod{h}{v}\big)_-^2\leq 2\epsilon} } \iprod{x}{\mu}. 
	\end{align}
	Combining (\ref{ineq:support_fcn_2})-(\ref{ineq:support_fcn_4}), we have
	\begin{align}\label{ineq:support_fcn_5}
	\Prob\big(\mathsf{h}(x)\geq z\big)&\leq 2\Prob\bigg(\sup_{\substack{\mu \in K, \big(\iprod{g}{\mu}-\pnorm{\mu}{} \sup\limits_{v \in L^\circ \cap \partial B_m}\iprod{h}{v}\big)_-^2\leq 2\epsilon} } \iprod{x}{\mu}\geq z\bigg) + 3\Prob(E(R)^c). 
	\end{align}
	Since the set
	\begin{align*}
	S_\epsilon \equiv \bigg\{\mu \in K, \bigg(\iprod{g}{\mu}-\pnorm{\mu}{}\sup\limits_{v \in L^\circ \cap \partial B_m}\iprod{h}{v}  \bigg)_-^2\leq 2\epsilon\bigg\}
	\end{align*}
	is non-increasing as $\epsilon \downarrow 0$, and 
	\begin{align*}
	&\bigcap_{\epsilon>0}S_\epsilon \subset \bigg\{\mu \in K, \bigg(\iprod{g}{\mu}-\pnorm{\mu}{}\sup\limits_{v \in L^\circ \cap \partial B_m}\iprod{h}{v}  \bigg)_-^2=0\bigg\},
	\end{align*}
	we have by Lemma \ref{lem:sup_set_conv}
	\begin{align*}
	\sup_{\substack{\mu \in K, \big(\iprod{g}{\mu}-\pnorm{\mu}{} \sup\limits_{v \in L^\circ \cap \partial B_m}\iprod{h}{v}\big)_-^2\leq 2\epsilon} } \iprod{x}{\mu} \downarrow  \sup_{\mu \in \cap_{\epsilon>0} S_\epsilon}\iprod{x}{\mu} \leq \sup_{\substack{\mu \in K, \big(\iprod{g}{\mu}-\pnorm{\mu}{} \sup\limits_{v \in L^\circ \cap \partial B_m}\iprod{h}{v}\big)_-^2=0} } \iprod{x}{\mu}
	\end{align*}
	as $\epsilon\downarrow 0$. Now taking $\epsilon \downarrow 0$ followed by $R\uparrow \infty$ on the right hand side of (\ref{ineq:support_fcn_5}), we have for any $z \in \R$,
	\begin{align}\label{ineq:support_fcn_6}
	\Prob\big(\mathsf{h}(x)\geq z\big)\leq 2 \Prob\bigg(\sup_{\mu \in K, \iprod{g}{\mu}\geq\pnorm{\mu}{}  \sup\limits_{v \in L^\circ \cap \partial B_m}\iprod{h}{v} } \iprod{x}{\mu}\geq z\bigg).
	\end{align}
	The uniform version (\ref{ineq:support_fcn_cgmt_02}) follows by working with the additional $\sup_{x \in \partial B_n}$ in the above arguments and a slight modification of the definition of $S_\epsilon$.

	For the other direction, using (\ref{ineq:support_fcn_1_1}) and the convex Gaussian min-max theorem (cf. Theorem \ref{thm:CGMT}-(2)), we have for any $z \in \R$,
	\begin{align*}
	\Prob\big(\mathsf{h}_\epsilon(x)<  z\big)&\leq 2\Prob\bigg( \sup_{\mu \in K}\inf_{v \in L^\circ\cap B_m(R/\epsilon)} \bigg\{\iprod{x}{\mu}-\pnorm{\mu}{}\iprod{h}{v}+\pnorm{v}{}\iprod{g}{\mu}+\frac{\epsilon}{2}\pnorm{v}{}^2\bigg\}<  z  \bigg) \nonumber\\
	&\qquad + \Prob\big(E(R)^c\big).
	\end{align*}
	Combining the above display and the simple lower bound
	\begin{align*}
	& \sup_{\mu \in K}\inf_{v \in L^\circ\cap B_m(R/\epsilon)} \bigg\{\iprod{x}{\mu}-\pnorm{\mu}{}\iprod{h}{v}+\pnorm{v}{}\iprod{g}{\mu}+\frac{\epsilon}{2}\pnorm{v}{}^2\bigg\}\nonumber\\
	& \geq \sup_{\mu \in K}\inf_{v \in L^\circ}\big\{\iprod{x}{\mu}-\pnorm{\mu}{}\iprod{h}{v}+\pnorm{v}{}\iprod{g}{\mu}\big\}\nonumber\\
	& = \sup_{\mu \in K}\inf_{\beta \geq 0}\bigg\{\iprod{x}{\mu}+\beta\bigg(\iprod{g}{\mu}-\pnorm{\mu}{}\sup\limits_{v \in L^\circ \cap \partial B_m}\iprod{h}{v}\bigg)\bigg\}\nonumber\\
	& = \sup_{\mu \in K, \iprod{g}{\mu}\geq\pnorm{\mu}{} \sup\limits_{v \in L^\circ \cap \partial B_m}\iprod{h}{v} } \iprod{x}{\mu},
	\end{align*}
	we have
	\begin{align}\label{ineq:support_fcn_7}
	\Prob\big(\mathsf{h}_\epsilon(x)< z\big)&\leq 2\Prob\bigg( \sup_{\mu \in K, \iprod{g}{\mu}\geq\pnorm{\mu}{} \sup\limits_{v \in L^\circ \cap \partial B_m}\iprod{h}{v} } \iprod{x}{\mu}< z  \bigg)  + \Prob\big(E(R)^c\big).
	\end{align}
	Now noting that using the same method of proof as for the claim (\ref{ineq:support_fcn_proj_lower_0}) in Step 1 in the proof of Proposition \ref{prop:support_fcn_proj_lower}, we have $ \mathsf{h}_\epsilon(x)\downarrow \mathsf{h}(x)$ as $\epsilon\downarrow 0$.	So by taking $\epsilon\downarrow 0$ followed by $R \uparrow \infty$ in (\ref{ineq:support_fcn_7}) we obtain 
	\begin{align}\label{ineq:support_fcn_10}
	\Prob\big(\mathsf{h}(x)< z\big)\leq 2 \Prob\bigg(\sup_{\mu \in K, \iprod{g}{\mu}\geq \pnorm{\mu}{} \sup\limits_{v \in L^\circ \cap \partial B_m}\iprod{h}{v} } \iprod{x}{\mu}< z\bigg).
	\end{align}
	Combining (\ref{ineq:support_fcn_6}) and (\ref{ineq:support_fcn_10}) to conclude. 
\end{proof}

Now we may prove Theorem \ref{thm:approx_kinematics_preimage}. 

\begin{proof}[Proof of Theorem \ref{thm:approx_kinematics_preimage}]
	
	\noindent (1). For the upper estimate, we take the generic $K$ in Proposition \ref{prop:support_fcn_cgmt} as $K_n\equiv K\cap B_n$.  Note that 
	\begin{align*}
	\sup_{x \in \partial B_n, \mu \in K_n, \iprod{g}{\mu}\geq \pnorm{\mu}{} \sup\limits_{v \in L^\circ \cap \partial B_m}\iprod{h}{v} } \iprod{x}{\mu} = \bm{1}\Big(\sup_{\mu \in K\cap \partial B_n} \iprod{g}{\mu}\geq \sup\limits_{v \in L^\circ \cap \partial B_m}\iprod{h}{v}\Big),
	\end{align*}
	so an application of (\ref{ineq:support_fcn_cgmt_02}) yields that for any $\epsilon>0$, 
	\begin{align*}
	\Prob\Big( \sup_{x \in \partial B_n}\mathsf{h}_{K_n\cap G^{-1}L }(x)\geq \epsilon\Big)\leq 2\Prob\bigg(\sup_{\mu \in K\cap \partial B_n} \iprod{g}{\mu}\geq \sup\limits_{v \in L^\circ \cap \partial B_m}\iprod{h}{v}\bigg). 
	\end{align*}
	Taking $\epsilon\downarrow 0$, we have
	\begin{align}\label{ineq:approx_kinematics_preimage_1}
	\Prob\Big( \sup_{x \in \partial B_n}\mathsf{h}_{K_n\cap G^{-1}L }(x)>0\Big)\leq 2\Prob\bigg(\sup_{\mu \in K\cap \partial B_n} \iprod{g}{\mu}\geq \sup\limits_{v \in L^\circ \cap \partial B_m}\iprod{h}{v}\bigg). 
	\end{align}
	By Gaussian concentration in Proposition \ref{prop:gaussian_conc_generic}, there exists some universal constant $C_0>0$ such that for $t\geq 1$, with probability at least $1-e^{-t}$,
	\begin{align*}
	\bigabs{\sup_{\mu \in K\cap \partial B_n}\iprod{g}{\mu}-\gw(K\cap \partial B_n)}\vee \bigabs{\sup_{v \in L^\circ\cap \partial B_m} \iprod{h}{v}-\gw(L^\circ\cap \partial B_m)}\leq C_0\sqrt{t}.
	\end{align*}
	Combined with (\ref{ineq:approx_kinematics_preimage_1}), we have for any $t\geq 1$,
	\begin{align*}
	\Prob\Big( \sup_{x \in \partial B_n}\mathsf{h}_{K_n\cap G^{-1}L }(x)>0\Big)\leq \bm{1}\Big(\gw(K\cap \partial B_n)-\gw(L^\circ\cap  \partial B_m)+C\sqrt{t}>0\Big)+2e^{-t}. 
	\end{align*}
	The claim now follows by Proposition \ref{prop:stat_dim}-(3)(4).
	
	\noindent (2). For the lower estimate, we first assume $K\neq \R^n$. Then we may find some $x \in \partial B_n$ such that $K_x=K$ and $\sup_{\mu \in K}\iprod{x}{\mu}=\infty$. A straightforward modification of (\ref{ineq:conic_program_1}) shows that for this choice of $x$ and any $z >0$,
	\begin{align*}
	\Prob\Big(\mathsf{h}_{K\cap G^{-1}L }(x)< z\Big)&\leq 2 \Prob\bigg(\sup_{\mu \in K} \iprod{x}{\mu}\cdot\bm{1}\Big(\sup_{\mu \in K_x\cap \partial B_n}\iprod{g}{\mu}\geq \sup_{v \in L^\circ\cap B_m} \iprod{h}{v}\Big)< z\bigg)\\
	& \leq 2\Prob\bigg(\sup_{\mu \in K\cap \partial B_n} \iprod{g}{\mu}< \sup\limits_{v \in L^\circ \cap \partial B_m}\iprod{h}{v} \bigg). 
	\end{align*}
	By a similar concentration argument as above, we have for any $z>0$ and $t\geq 1$,
	\begin{align*}
	\Prob\Big(\mathsf{h}_{K\cap G^{-1}L }(x)< z\Big)\leq \bm{1}\Big(\gw(K\cap \partial B_n)-\gw(L^\circ\cap  \partial B_m)-C\sqrt{t}<0\Big)+2e^{-t}.
	\end{align*}
	The claim follows for $K\neq \R^n$. The claim for $K=\R^n$ follows simply by noting that $n-1\leq \delta(K_x)\leq n$. 
\end{proof}

\subsection{Proof of Theorem \ref{thm:generalized_gordon}}

Fix $t\geq 1$. First note that
\begin{align*}
\bigcup_{R>0}\bigg\{\frac{\mu}{\sqrt{\pnorm{\mu}{}^2+\sigma_m^2}}: \mu \in K\cap B_n(R)\bigg\} = \bigg\{\frac{\mu}{\sqrt{\pnorm{\mu}{}^2+\sigma_m^2}}: \mu \in K\bigg\}
\end{align*}
are included in the unit ball $B_n(1)$, so using the continuity Lemma \ref{lem:sup_set_conv} and the monotone convergence theorem, we obtain 
\begin{align*}
\uparrow \lim_{R \to \infty}\mathscr{T}_{K\cap B_n(R)}(\xi)= \mathscr{T}_K(\xi).
\end{align*}
Now we choose $R_t>0$ such that
\begin{align}\label{ineq:generalized_gordon_1}
\sup_{R>R_t}\abs{\mathscr{T}_{K\cap B_n(R)}(\xi)-\mathscr{T}_K(\xi)}\leq C_1\sqrt{t}
\end{align}
for some absolute constant $C_1>0$ to be specified later on. 

\noindent (1). Using the convex Gaussian min-max theorem (cf. Theorem \ref{thm:CGMT}), we have for any $L_v>0$
\begin{align*}
&\Prob\bigg(\min_{\mu \in \R^n} \max_{v \in B_n(L_v)} \Big\{\bm{0}_K(\mu)+\iprod{v}{G\mu-\xi}\Big\} >0\bigg)\\
&= \Prob\bigg(\min_{\mu \in B_n(R_t)} \max_{v \in B_n(L_v)} \Big\{\bm{0}_K(\mu)+\iprod{v}{G\mu-\xi}\Big\} >0\bigg)\\
&\leq 2 \Prob\bigg(\min_{\mu \in B_n(R_t)} \max_{v \in B_n(L_v)}\Big\{-\pnorm{v}{}\iprod{g}{\mu}+\pnorm{\mu}{}\iprod{h}{v}-\iprod{v}{\xi}+\bm{0}_K(\mu) \Big\}>0\bigg).
\end{align*}
The relation in the bracket of the display is equivalent to
\begin{align*}
&\min_{\mu \in B_n(R_t)}\max_{\beta \in [0,L_v]} \bigg\{\beta \Big(\bigpnorm{\pnorm{\mu}{}h-\xi}{}-\iprod{g}{\mu}\Big)+\bm{0}_K(\mu)\bigg\} \\
&= \min_{\mu \in K\cap B_n(R_t)} L_v \Big(\bigpnorm{\pnorm{\mu}{}h-\xi}{}-\iprod{g}{\mu}\Big)_+>0,
\end{align*}
so is further equivalent to 
\begin{align*}
\sup_{\mu \in K\cap B_n(R_t)}\bigg(\frac{\iprod{g}{\mu}}{\sqrt{ \pnorm{\mu}{}^2 + \sigma_m^2}}-\Delta(\mu)\bigg)<  \sqrt{m},
\end{align*}
where 
\begin{align*}
\Delta(\mu)\equiv \frac{\sqrt{\bigpnorm{  \pnorm{\mu}{}h-\xi}{}^2}-\sqrt{\E \bigpnorm{  \pnorm{\mu}{}h-\xi}{}^2 }}{\sqrt{ \pnorm{\mu}{}^2 + \sigma_m^2}}.
\end{align*}
Consequently, 
\begin{align}\label{ineq:generalized_gordon_2}
&\Prob\bigg(\min_{\mu \in \R^n} \max_{v \in B_n(L_v)} \Big\{\bm{0}_K(\mu)+\iprod{v}{G\mu-\xi}\Big\} >0\bigg)\nonumber\\
&\leq 2\Prob\bigg(\sqrt{m}>\sup_{ \mu \in K\cap B_n(R_t)  } \biggiprod{g}{  \frac{\mu}{  \sqrt{\pnorm{\mu}{}^2+\sigma_m^2} } }-\sup_{\mu \in K}\abs{\Delta(\mu)}\bigg). 
\end{align}
Now as
\begin{align*}
\sup_{\mu \in K }\abs{\Delta(\mu)}&\leq \sup_{\mu \in K}\frac{ \pnorm{\mu}{}^2 \abs{ \pnorm{h}{}^2-m}+2\pnorm{\mu}{}\abs{\iprod{h}{\xi} } }{ (\pnorm{\mu}{}^2+\sigma_m^2)\sqrt{m}}\leq \frac{ \abs{\pnorm{h}{}^2-m} }{\sqrt{m}}+\biggabs{\biggiprod{h}{ \frac{\xi}{\pnorm{\xi}{}} }},
\end{align*}
on an event $E_{0,t}$ with $\Prob(E_{0,t})\geq 1-e^{-t}$, there exists some absolute constant $C_0>0$ such that 
\begin{align}\label{ineq:generalized_gordon_3_0}
\sup_{\mu \in K }\abs{\Delta(\mu)}\leq C_0\sqrt{t}.
\end{align}
On the other hand, by Gaussian concentration in Proposition \ref{prop:gaussian_conc_generic}, on an event $E_{1,t}$ with $\Prob(E_{1,t})\geq 1-e^{-t}$, by adjusting the constant $C_0>0$, we have
\begin{align*}
\biggabs{\sup_{ \mu \in K\cap B_n(R_t)  } \biggiprod{g}{  \frac{\mu}{  \sqrt{\pnorm{\mu}{}^2+\sigma_m^2} } }- \mathscr{T}_{K\cap B_n(R_t)}(\xi)}\leq C_0\sqrt{t}/2. 
\end{align*}
By choosing $C_1\equiv C_0/2$ in (\ref{ineq:generalized_gordon_1}), on the event $E_{1,t}$ we have 
\begin{align}\label{ineq:generalized_gordon_3}
\biggabs{\sup_{ \mu \in K\cap B_n(R_t)  } \biggiprod{g}{  \frac{\mu}{  \sqrt{\pnorm{\mu}{}^2+\sigma_m^2} } }- \mathscr{T}_{K}(\xi)}\leq C_0\sqrt{t}. 
\end{align}
Combining (\ref{ineq:generalized_gordon_2}) and the estimates in (\ref{ineq:generalized_gordon_3_0})-(\ref{ineq:generalized_gordon_3}), we conclude that 
\begin{align*}
&\Prob\bigg(\min_{\mu \in \R^n} \max_{v \in B_n(L_v)} \Big\{\bm{0}_K(\mu)+\iprod{v}{G\mu-\xi}\Big\} >0\bigg)\nonumber\\
&\leq \bm{1}\big(\sqrt{m}> \mathscr{T}_{K}(\xi)-2C_0\sqrt{t}\big)+4e^{-t}.
\end{align*}
Equivalently, for $\sqrt{m}\leq \mathscr{T}_{K}(\xi)-2C_0\sqrt{t}$, with probability at least $1-4e^{-t}$,
\begin{align*}
&\min_{\mu \in \R^n} \max_{v \in B_n(L_v)} \Big\{\bm{0}_K(\mu)+\iprod{v}{G\mu-\xi}\Big\} \leq 0 \\
\Leftrightarrow \quad & \min_{\mu \in \R^n} \max_{v \in B_n(L_v)} \Big\{\bm{0}_K(\mu)+\iprod{v}{G\mu-\xi}\Big\}  = 0\quad \hbox{(the objective is $\geq 0$)}\\
\Leftrightarrow \quad & \min_{\mu \in K} \max_{v \in B_n(L_v)} \iprod{v}{G\mu-\xi}  = 0\\
\Leftrightarrow\quad & \xi \in G K.
\end{align*}

\noindent (2). By the convex Gaussian min-max theorem (cf. Theorem \ref{thm:CGMT}), for any $L_v>0$, using the same calculations as above, for any $R>R_t$ we have
\begin{align*}
&\Prob\bigg(\min_{\mu \in \R^n} \max_{v \in \R^n} \Big\{\bm{0}_{K\cap B_n(R)}(\mu)+\iprod{v}{G\mu-\xi}\Big\} \leq 0\bigg)\\
&\leq \Prob\bigg(\min_{\mu \in B_n(R)} \max_{v \in B_n(L_v)} \Big\{\bm{0}_K(\mu)+\iprod{v}{G\mu-\xi}\Big\} \leq 0\bigg)\\
&\leq 2 \Prob\bigg(\sqrt{m}\leq \sup_{ \mu \in B_n(R)  } \biggiprod{g}{  \frac{\mu}{  \sqrt{\pnorm{\mu}{}^2+\sigma_m^2} } }+\sup_{\mu \in K}\abs{\Delta(\mu)}\bigg).
\end{align*}
Using (\ref{ineq:generalized_gordon_3_0})-(\ref{ineq:generalized_gordon_3}), we then conclude that for $\sqrt{m}> \mathscr{T}_{K}(\xi)+2C_0\sqrt{t}$, with probability at least $1-4e^{-t}$,
\begin{align*}
&\min_{\mu \in \R^n} \max_{v \in \R^n} \Big\{\bm{0}_{K\cap B_n(R)}(\mu)+\iprod{v}{G\mu-\xi}\Big\}>0\\
\Leftrightarrow\quad & \min_{\mu \in K\cap B_n(R)} \max_{v \in \R^n} \iprod{v}{G\mu-\xi}=\infty \\
\Leftrightarrow \quad & \xi \notin G(K\cap B_n(R)).
\end{align*}
Finally, as the sequence of non-increasing set $\{\xi \notin G(K\cap B_n(R)):R>R_t\}$ satisfies $\{\xi \notin GK\}= \cap_{R>R_t}\{\xi \notin G(K\cap B_n(R))\}$, we have
\begin{align*}
\Prob(\xi \notin GK)=\downarrow\lim_{R \to \infty} \Prob\big(\xi \notin G(K\cap B_n(R))\big)\geq 1-4e^{-t}.
\end{align*}
The claims follow by adjusting constants. \qed

\section{Proofs for Section \ref{section:applications}}\label{section:proof_application}

\subsection{Proof of Theorem \ref{thm:logistic_cone_MLE}}\label{section:proof_logistic}

The key to the proof of Theorem \ref{thm:logistic_cone_MLE} is the following. 

\begin{proposition}\label{prop:logistic_exist}
	Suppose that $X_1,\ldots,X_m$ are i.i.d. $\mathcal{N}(0,I_n)$ and $\beta_0=0$. Then the following statements hold.
	\begin{enumerate}
		\item $\Prob\big(\hbox{The cone constrained MLE $\hat{\beta}_K$ exists}\big)\leq \Prob\big(GK\cap \R_{> 0}^m = \emptyset\big)$.
		\item There exists some universal constant $c>0$ such that 
		\begin{align*}
		&\Prob\big(\hbox{The cone constrained MLE $\hat{\beta}_K$ does not exist}\big)\\
		&\leq \Prob\big(GK\cap \R_{\geq 0}^m \neq \{0\}\big)+2e^{-c(\sqrt{m}-\sqrt{\delta(K)})^2}. 
		\end{align*}
	\end{enumerate}
	
\end{proposition}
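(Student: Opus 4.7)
The approach splits into a short deterministic argument for (1) and, for (2), a deterministic set inclusion combined with a Gordon-type lower tail estimate for the smallest restricted singular value of $G$. First, under $\beta_0=0$ the labels $Y_i\in\{\pm 1\}$ are symmetric and independent of $X_i$, so the rows $Y_iX_i^\top$ of the $m\times n$ matrix $G$ defined by $(G\beta)_i=Y_iX_i^\top\beta$ are i.i.d.\ $\mathcal{N}(0,I_n)$; thus $G$ is a standard Gaussian matrix in the sense of (\ref{def:standard_gaussian_matrix}), and $\ell(\beta)=-\sum_i\log(1+e^{-(G\beta)_i})$.

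\textbf{Part (1).} I will establish the deterministic inclusion $\{\hat\beta_K\text{ exists}\}\subseteq\{GK\cap\R_{>0}^m=\emptyset\}$. If there exists $\beta\in K$ with $G\beta\in\R_{>0}^m$, then $\beta\neq 0$ and $t\beta\in K$ for all $t>0$ by the cone property; each summand in $\ell(t\beta)=-\sum_i\log(1+e^{-t(G\beta)_i})$ decreases monotonically to $0$, so $\ell(t\beta)\uparrow 0$, while $\ell(\beta')<0$ at every finite $\beta'$. Hence the supremum is unattained and $\hat\beta_K$ does not exist. Taking probabilities yields (1).

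\textbf{Part (2).} The plan is the deterministic inclusion
\begin{align*}
\{\hat\beta_K\text{ does not exist}\}\subseteq\{GK\cap\R_{\geq 0}^m\neq\{0\}\}\cup\{K\cap\nullsp(G)\neq\{0\}\},
\end{align*}
together with the bound $\Prob(K\cap\nullsp(G)\neq\{0\})\leq 2e^{-c(\sqrt{m}-\sqrt{\delta(K)})^2}$; a union bound then closes the proof. For the inclusion, assume $\hat\beta_K$ does not exist and pick any maximizing sequence $\{\beta_n\}\subset K$. A bounded maximizing sequence would yield a maximizer via Bolzano--Weierstrass, continuity of $\ell$, and closedness of $K$, contradicting non-existence; hence $\pnorm{\beta_n}{}\to\infty$ along a subsequence, and a further subsequence gives $\beta_n/\pnorm{\beta_n}{}\to\beta^\ast\in K\cap\partial B_n$. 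If $(G\beta^\ast)_i<0$ for some $i$, then $(G\beta_n)_i\to-\infty$ forces $\ell(\beta_n)\to-\infty$, contradicting $\ell(\beta_n)\to\sup_{\beta\in K}\ell(\beta)\geq\ell(0)=-m\log 2$. Thus $G\beta^\ast\in\R_{\geq 0}^m$: if $G\beta^\ast\neq 0$, then $\beta^\ast$ witnesses the first event, while if $G\beta^\ast=0$, then $\beta^\ast\in K\cap\nullsp(G)\setminus\{0\}$ witnesses the second.

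For the probabilistic bound, observe that $\{K\cap\nullsp(G)\neq\{0\}\}=\{\inf_{\beta\in K\cap\partial B_n}\pnorm{G\beta}{}=0\}$. Writing $\pnorm{G\beta}{}=\sup_{w\in\partial B_m}\iprod{w}{G\beta}$, the classical Gordon min-max inequality (a direct consequence of Theorem \ref{thm:CGMT}(1) applied to the bilinear form $\iprod{w}{G\beta}$) yields
\begin{align*}
\E\inf_{\beta\in K\cap\partial B_n}\pnorm{G\beta}{}\geq\E\pnorm{g}{}-\gw(K\cap\partial B_n)\geq\sqrt{m-1}-\sqrt{\delta(K)},
\end{align*}
with $g\sim\mathcal{N}(0,I_m)$, using Proposition \ref{prop:stat_dim}(3)(4). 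Since $G\mapsto\inf_{\beta\in K\cap\partial B_n}\pnorm{G\beta}{}$ is $1$-Lipschitz in Frobenius norm, Gaussian concentration (Theorem \ref{thm:gaussian_concentration}) delivers the required sub-Gaussian tail on $\{\inf\leq 0\}$ after adjusting the constant $c>0$; in the complementary regime $\sqrt{m}\lesssim\sqrt{\delta(K)}$ the first term $\Prob(GK\cap\R_{\geq 0}^m\neq\{0\})$ is already near one by Theorem \ref{thm:approx_kinematics}(2), so the proposition holds trivially in either regime.

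The main delicate step is the boundary case $G\beta^\ast=0$ in the inclusion: such a $\beta^\ast$ does not correspond to a strict direction of likelihood improvement, and absorbing its contribution is precisely the role of the correction term. Everything else is either a soft compactness argument or a routine Gordon-plus-concentration estimate.
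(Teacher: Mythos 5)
Your argument is correct, and part (1) coincides with the paper's in substance: a direction $\beta\in K$ with $G\beta\in\R_{>0}^m$ drives $\ell(t\beta)$ up to the supremum $0$, which is never attained at a finite point. For part (2) you end up with the same two ingredients as the paper --- the event $\{GK\cap\R_{\geq 0}^m\neq\{0\}\}$ and a lower-tail estimate for $\inf_{\mu\in K\cap\partial B_n}\pnorm{G\mu}{}$ (this is exactly the paper's Lemma \ref{lem:cone_eigenvalue}, which is proved by the same Gordon comparison you invoke, packaged as a tail inequality rather than as expectation plus concentration) --- but you organize the deterministic step differently. The paper works on the complementary event: assuming $GK\cap\R_{\geq0}^m=\{0\}$ it extracts a quantitative separation $\epsilon_0=\inf_{v\in GK,\pnorm{v}{}=1}\dis(v,\R_{\geq0}^m)>0$, deduces coercivity of the objective on $GK$, and transfers this to coercivity on $K$ via the restricted singular value bound, so a maximizer exists by compactness. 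You instead take an unbounded maximizing sequence, extract a recession direction $\beta^{\ast}\in K\cap\partial B_n$ with $G\beta^{\ast}\in\R_{\geq0}^m$, and split according to whether $G\beta^{\ast}$ vanishes; this yields the clean deterministic inclusion $\{\hbox{no MLE}\}\subseteq\{GK\cap\R_{\geq0}^m\neq\{0\}\}\cup\{K\cap\nullsp(G)\neq\{0\}\}$ and sidesteps the paper's appeal to closedness of $GK$ and positivity of $\epsilon_0$, which is a mild gain in cleanliness. One spot to tighten: when $\delta(K)\gg m$ the Gordon estimate is vacuous (indeed $K\cap\nullsp(G)\neq\{0\}$ may hold almost surely) while $(\sqrt{m}-\sqrt{\delta(K)})^2$ is large, so "the first term is near one" does not by itself close the argument --- you need Theorem \ref{thm:approx_kinematics}-(2) to give $\Prob(GK\cap\R_{\geq0}^m=\{0\})\leq e^{-c(\sqrt{\delta(K)}-\sqrt{m/2})^2}\leq 2e^{-c'(\sqrt{m}-\sqrt{\delta(K)})^2}$ in that regime, so that the right-hand side of the claimed bound exceeds $1$; when $\abs{\sqrt{m}-\sqrt{\delta(K)}}=\bigo(1)$ the bound is trivial after shrinking $c$. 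This is routine bookkeeping of the same kind the paper itself compresses into "adjusting constants."
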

\begin{proof}
	We shall specify a particular construction of $G$ in the proof. Let $G \in \R^{m\times n}$ be the matrix whose rows are given by $\{X_i^\top: i \in [m]\}\subset \R^n$. Then the entries of $G$ are distributed as i.i.d. $\mathcal{N}(0,1)$ due to the normality assumption on $X_i$'s. We further let $V\equiv GK \subset \R^m$. Then $V$ is a closed convex cone in $\R^m$ (which is random due to the randomness of $G$).  Using that under the global null $\beta_0=0$, the distributions of $Y_i$'s are independent of $X_i$ with $\Prob(Y_i=\pm 1)=1/2$ so $\{Y_i\cdot X_i^\top \beta: \beta \in K\}\equald \{X_i^\top \beta: \beta \in K\}$. Now it easily follows that
	\begin{align*}
	\max_{\beta \in K}\ell(\beta) \equald \min_{\beta \in K} \sum_{i=1}^m \log \Big[1+\exp\big(-X_i^\top \beta\big)\Big] = \min_{v \in V} \sum_{i=1}^m \log\big(1+e^{-v_i}\big),
	\end{align*}	
	where recall $\ell(\cdot)$ is the likelihood function defined in (\ref{eqn:logistic_likelihood}). With these definitions and observations, we may prove the claims (1)-(2). 
	
	\noindent (1). Suppose $V\cap \R_{> 0}^m = GK\cap \R_{> 0}^m \neq \emptyset$. Then there exists some $ \bar{v} \in V$ such that $\bar{v} \in \R_{> 0}^m$.  This means
	\begin{align*}
	\min_{v \in V} \sum_{i=1}^m \log\big(1+e^{-v_i}\big)&\leq \min_{\beta \geq 1} \sum_{i=1}^m \log\big(1+e^{-\beta \bar{v}_i}\big)\leq \liminf_{\beta \uparrow \infty} \sum_{i=1}^m \log\big(1+e^{-\beta \bar{v}_i}\big) =0.
	\end{align*}
	As the left hand side of the above display is non-negative, we necessarily have $
	\min_{v \in V} \sum_{i=1}^m \log\big(1+e^{-v_i}\big)=0$. 
	So any minimizer of $v \mapsto \sum_{i=1}^m \log\big(1+e^{-v_i}\big)$ must be reached at $\infty$, and the cone constrained MLE does not exist. Summarizing the above arguments, we have proven
	\begin{align*}
	\big\{ GK\cap \R_{> 0}^m \neq \emptyset\big\}\subset \big\{\hbox{The cone constrained MLE $\hat{\beta}_K$ does not exist}\big\}
	\end{align*}
	and therefore the claim.

	\noindent (2). Suppose $V\cap \R_{\geq 0}^m = GK\cap \R_{\geq 0}^m = \{0\}$. By the closedness of $\R_{\geq 0}^m$, the quantity
	\begin{align*}
	\epsilon_0\equiv \inf_{v \in V, \pnorm{v}{}=1} \dis(v, \R_{\geq 0}^m) >0
	\end{align*}
	is strictly positive. As $
	\dis^2(v, \R_{\geq 0}^m) = \min_{w \in \R_{\geq 0}^m} \pnorm{v-w}{}^2 = \pnorm{v_-}{}^2$, by homogeneity of $V$, for all $v \in V$, we have $\pnorm{v_-}{}\geq \pnorm{v}{}\epsilon_0$. This means for any $R>0$, $\{v \in V, \pnorm{v}{}\geq R\}\subset \{v \in \R^m, \pnorm{v_-}{}\geq R\epsilon_0\}$ and therefore 
	\begin{align}\label{ineq:logistic_exist_prop_1}
	\min_{v \in V, \pnorm{v}{}\geq R} \sum_{i=1}^m \log\big(1+e^{-v_i}\big)&\geq \min_{v \in \R^m, \pnorm{v_-}{}\geq R\epsilon_0 }\sum_{i=1}^m \log\big(1+e^{-v_i}\big)\nonumber\\
	&\geq \log\big(1+ e^{R\epsilon_0}\big)\geq R\epsilon_0. 
	\end{align}
	On the other hand, as $0\in V$, we always have 
	\begin{align}\label{ineq:logistic_exist_prop_2}
	\min_{v \in V}\sum_{i=1}^m \log\big(1+e^{-v_i}\big)\leq m (\log 2).
	\end{align}
	Combining (\ref{ineq:logistic_exist_prop_1}) and (\ref{ineq:logistic_exist_prop_2}), if $R>0$ is chosen such that $R>m(\log 2)/\epsilon_0$, then the set $\{v\in V, \pnorm{v}{}\geq R\}$ is not feasible in the minimization problem $v\mapsto \sum_{i=1}^m \log\big(1+e^{-v_i}\big), v \in V$. So on the event
	\begin{align*}
	E(t)\equiv \bigg\{\inf_{\mu \in K\cap \partial B_n} \pnorm{G\mu}{}\geq \big(\sqrt{m}-\sqrt{\delta(K)}-C\sqrt{t}\big)_+\bigg\},\quad t\geq 1,
	\end{align*}
	the set $\{\beta \in K: \pnorm{\beta}{}\geq R/\big(\sqrt{m}-\sqrt{\delta(K)}-C\sqrt{t}\big)_+ \}$ is not feasible in the minimization problem $\beta \mapsto \sum_{i=1}^m \log\big(1+e^{-X_i^\top \beta}\big)$, so the cone constrained MLE $\hat{\beta}_K$ exists provided that $\sqrt{m}-\sqrt{\delta(K)}-C\sqrt{t}>0$. Summarizing the arguments above, we have shown
	\begin{align*}
	\big\{GK\cap \R_{\geq 0}^m = \{0\}\big\}\cap E(t)\subset \big\{\hbox{The cone constrained MLE $\hat{\beta}_K$ exists}\big\},
	\end{align*}
	provided $t\geq 1$ is chosen such that $\sqrt{m}-\sqrt{\delta(K)}-C\sqrt{t}>0$. Using Lemma \ref{lem:cone_eigenvalue} below, the above display further implies
	\begin{align*}
	&\Prob\big(\hbox{The cone constrained MLE $\hat{\beta}_K$ does not exist}\big)\\
	&\leq \Prob\big(GK\cap \R_{\geq 0}^m \neq \{0\}\big)+e^{-t},
	\end{align*}
	provided $t\geq 1$ is chosen such that $\sqrt{m}-\sqrt{\delta(K)}-C\sqrt{t}>0$. Now taking $t=c \big(\sqrt{m}-\sqrt{\delta(K)}\big)^2$ for a small enough absolute constant $c>0$ to conclude when $\sqrt{m}-\sqrt{\delta(K)}\geq C$. The case $\sqrt{m}-\sqrt{\delta(K)}< C$ follows by simply adjusting constants.
\end{proof}

The proof of the above Proposition \ref{prop:logistic_exist} relies on the following result that gives tight bounds on certain smallest cone-constrained eigenvalues, which may be of independent interest. 

\begin{lemma}\label{lem:cone_eigenvalue}
	Let $K\subset \R^n$ be a non-trivial closed convex cone. There exists some universal constant $C>0$ such that for all $t\geq 1$, 
	\begin{align*}
	\Prob\bigg(\inf_{\mu \in K\cap \partial B_n} \pnorm{G\mu}{}\geq \big(\sqrt{m}-\sqrt{\delta(K)}-C\sqrt{t}\big)_+\bigg)\geq 1-e^{-t}.
	\end{align*}
\end{lemma}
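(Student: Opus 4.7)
The plan is to express $\inf_{\mu \in K \cap \partial B_n}\pnorm{G\mu}{}$ as the cost optimum of a min-max problem to which the one-sided convex Gaussian min-max theorem (Theorem \ref{thm:CGMT}-(1)) can be applied, and then reduce to a tail estimate for an auxiliary Gaussian process. Using $\pnorm{G\mu}{}=\sup_{v\in \partial B_m}\iprod{v}{G\mu}$ and the change of variable $v\mapsto -v$, I would write
\[
-\inf_{\mu \in K\cap \partial B_n}\pnorm{G\mu}{} = \sup_{\mu \in K\cap \partial B_n}\inf_{v \in \partial B_m}\iprod{v}{G\mu},
\]
which is precisely in the max-min form $\Phi^{\mathrm{p}}(G^\top)$ of Theorem \ref{thm:CGMT} (applied to $G^\top$, which again has i.i.d.\ standard Gaussian entries), with $D_u = K\cap \partial B_n$, $D_v = \partial B_m$, and $Q\equiv 0$.

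Applying Theorem \ref{thm:CGMT}-(1) to $G^\top$, for any $\lambda \in \R$,
\[
\Prob\Bigl(\sup_{\mu \in K\cap \partial B_n}\inf_{v\in \partial B_m}\iprod{v}{G\mu}\geq \lambda\Bigr)\leq 2\Prob\bigl(\Phi^{\mathrm{a}}(g,h)\geq \lambda\bigr),
\]
where, using $\pnorm{\mu}{}=\pnorm{v}{}=1$ and $\inf_{v\in \partial B_m}\iprod{h}{v}=-\pnorm{h}{}$,
\[
\Phi^{\mathrm{a}}(g,h) = \sup_{\mu \in K\cap \partial B_n}\inf_{v\in \partial B_m}\bigl(\iprod{g}{\mu}+\iprod{h}{v}\bigr) = \sup_{\mu \in K\cap \partial B_n}\iprod{g}{\mu}-\pnorm{h}{}.
\]
Setting $\lambda = -\tau$ and passing to complements yields
\[
\Prob\Bigl(\inf_{\mu \in K\cap \partial B_n}\pnorm{G\mu}{}\leq \tau\Bigr)\leq 2\Prob\Bigl(\pnorm{h}{}-\sup_{\mu \in K\cap \partial B_n}\iprod{g}{\mu}\leq \tau\Bigr).
\]

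To finish I would estimate the two Gaussian quantities on the right by concentration. By the Gaussian–Poincar\'e bound together with $\E\pnorm{h}{}^2=m$, a standard calculation gives $\E\pnorm{h}{}\geq \sqrt{m}-C$, so Proposition \ref{prop:gaussian_conc_generic} (for the $1$-Lipschitz map $h\mapsto \pnorm{h}{}$) gives $\pnorm{h}{}\geq \sqrt{m}-C\sqrt{t}$ with probability at least $1-e^{-t}$. Simultaneously, Proposition \ref{prop:gaussian_conc_generic} combined with Proposition \ref{prop:stat_dim}-(3)(4) yields $\sup_{\mu \in K\cap \partial B_n}\iprod{g}{\mu}\leq \sqrt{\delta(K)}+C\sqrt{t}$ with probability at least $1-e^{-t}$. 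Choosing $\tau = (\sqrt{m}-\sqrt{\delta(K)}-C'\sqrt{t})_+$ with a sufficiently large $C'>0$ then forces the event on the right-hand side above to have probability at most $2e^{-t}$, producing the claim after adjusting constants.

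No step presents a serious obstacle, since the conversion to a max-min form is a small but crucial trick, and the remainder is routine given CGMT and Gaussian concentration. The only subtle point worth watching is the sign manipulation when converting $\inf \pnorm{G\mu}{}$ into a max-min, and the correct transposition so that the matrix in CGMT has dimensions $n\times m$ matching the requirement $u\in \R^n$, $v\in \R^m$.
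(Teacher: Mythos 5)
Your proposal is correct and follows essentially the same route as the paper: both reduce $\inf_{\mu \in K\cap \partial B_n}\pnorm{G\mu}{}$ via the one-sided Gaussian min-max theorem to the auxiliary quantity $\pnorm{h}{}-\sup_{\mu \in K\cap\partial B_n}\iprod{g}{\mu}$ and then conclude by Gaussian concentration together with Proposition \ref{prop:stat_dim}-(3)(4). The only cosmetic difference is that you negate to obtain an explicit max-min form for $G^\top$ with $D_v=\partial B_m$, while the paper keeps the min-sup orientation over $v\in B_m$ and obtains the auxiliary $\{\pnorm{h}{}-\sup_\mu\iprod{g}{\mu}\}_+$; these are equivalent manipulations.
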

\begin{proof}
	As $\inf_{\mu \in K\cap \partial B_n} \pnorm{G\mu}{} = \inf_{\mu \in K\cap \partial B_n} \sup_{v \in B_m} \iprod{v}{G\mu}$, by using the one-sided Gaussian min-max theorem (cf. Theorem \ref{thm:CGMT}-(1)), we have for any $z \in \R$,
	\begin{align*}
	\Prob\bigg(\inf_{\mu \in K\cap \partial B_n} \pnorm{G\mu}{}\leq z\bigg)&=\Prob\bigg(\inf_{\mu \in K\cap \partial B_n} \sup_{v \in B_m} \iprod{v}{G\mu}\leq z\bigg)\\
	&\leq 2\Prob\bigg(\inf_{\mu \in K\cap \partial B_n} \sup_{v \in B_m}\Big\{\pnorm{\mu}{}\iprod{h}{v}-\pnorm{v}{}\iprod{g}{\mu}\Big\}\leq z\bigg). 
	\end{align*}
	On the other hand, note that
	\begin{align*}
	&\inf_{\mu \in K\cap \partial B_n} \sup_{v \in B_m}\Big\{\pnorm{\mu}{}\iprod{h}{v}-\pnorm{v}{}\iprod{g}{\mu}\Big\}= \inf_{\mu \in K\cap \partial B_n} \sup_{0\leq \beta \leq 1} \beta\big(\pnorm{h}{}-\iprod{g}{\mu}\big)\\
	& = \inf_{\mu \in K\cap \partial B_n}\big(\pnorm{h}{}-\iprod{g}{\mu}\big)_+ = \bigg\{\pnorm{h}{}-\sup_{\mu \in K\cap \partial B_n}\iprod{g}{\mu}\bigg\}_+.
	\end{align*}
	Combining the above two displays, we have for any $z \in \R$,
	\begin{align}\label{ineq:cone_eigenvalue_1}
	\Prob\bigg(\inf_{\mu \in K\cap \partial B_n} \pnorm{G\mu}{}\leq z\bigg)\leq 2 \Prob\bigg(\bigg\{\pnorm{h}{}-\sup_{\mu \in K\cap \partial B_n}\iprod{g}{\mu}\bigg\}_+\leq z\bigg).
	\end{align}
	By Gaussian concentration in Proposition \ref{prop:gaussian_conc_generic}, there exists some universal constant $C_0>0$ such that for any $t\geq 1$, on an event $E(t)$ with probability at least $1-e^{-t}$, 
	\begin{align*}
	\bigabs{\pnorm{h}{}-\sqrt{m}}\vee \bigabs{\sup_{\mu \in K\cap \partial B_n}\iprod{g}{\mu}-\gw(K\cap \partial B_n) }\leq C_0\sqrt{t}. 
	\end{align*}
	Consequently, with $z(t)\equiv \big(\sqrt{m}-\gw(K\cap \partial B_n)-4C_0\sqrt{t}\big)_+$, using (\ref{ineq:cone_eigenvalue_1}), it holds for any $t\geq 1$ that
	\begin{align*}
	\Prob\bigg(\inf_{\mu \in K\cap \partial B_n} \pnorm{G\mu}{}\leq z(t)\bigg)\leq 2\Prob(E(t)^c)\leq 2e^{-t}.
	\end{align*}
	The claim now follows by adjusting constants using Proposition \ref{prop:stat_dim}-(3)(4). 
\end{proof}

Now we may prove Theorem \ref{thm:logistic_cone_MLE}.

\begin{proof}[Proof of Theorem \ref{thm:logistic_cone_MLE}]
	\noindent (1). For $\alpha \in [0,\pi/2)$, let 
	\begin{align*}
	L_+(\alpha)\equiv \Big\{v \in \R^m: \min_{i \in [m]} v_i\geq \sin(\alpha)\cdot \pnorm{v}{}\Big\}.
	\end{align*}
	Then $\{L_+(\alpha): \alpha \in [0,\pi/2]\}$ is a non-decreasing sequence of closed convex cones in $\R^m$ as $\alpha\downarrow 0$, with $L_+(0)=\R_{\geq 0}^m$ being the non-negative orthant cone. Consequently, $\{L_k\equiv L_+(1/k)\cap B_m: k \in \N\}$ is a non-decreasing sequence of compact sets. It is easy to verify that $\mathrm{cl}(\cup_{k \in \N} L_k)=\R_{\geq 0}^m\cap B_m$. Now using Lemma \ref{lem:sup_set_conv} and integrability of the suprema of Gaussian processes, we have
	\begin{align}\label{ineq:logistic_1}
	\delta(L_+(1/k)) = \E \bigg(\sup_{v \in L_k}\iprod{h}{v}\bigg)^2\uparrow  \E \bigg(\sup_{v \in \R_{\geq 0}^m\cap B_m}\iprod{h}{v}\bigg)^2 = \delta(\R_{\geq 0}^m)=\frac{m}{2}. 
	\end{align}
	The last identity $\delta(\R_{\geq 0}^m)=m/2$ follows from, e.g., \cite[Table 3.1]{amelunxen2014living}. On the other hand, as
	\begin{align*}
	\big\{ GK\cap \R_{> 0}^m = \emptyset\big\}\subset \big\{ GK\cap L_+(1/k) = \{0\}\big\}
	\end{align*}
	holds for any $k \in \N$, by Proposition \ref{prop:logistic_exist}-(1), 
	\begin{align}\label{ineq:logistic_2}
	&\Prob\big(\hbox{The cone constrained MLE $\hat{\beta}_K$ exists}\big)\nonumber\\
	&\leq \Prob\big(GK\cap \R_{> 0}^m = \emptyset\big)\leq \Prob\big(GK\cap L_+(1/k) = \{0\}\big).
	\end{align}
	Now using Theorem \ref{thm:approx_kinematics}-(2), there exists some universal constant $C_0>0$ such that for all $t\geq 1$,
	\begin{align}\label{ineq:logistic_3}
	&\Prob\big(GK\cap L_+(1/k) = \{0\}\big)\leq \bm{1}\Big(\sqrt{\delta(K)}<\sqrt{m-\delta(L_+(1/k))}+C_0\sqrt{t}\Big)+e^{-t}.
	\end{align}
	Now under the assumed condition, the convergence in (\ref{ineq:logistic_1}) entails that
	\begin{align*}
	\sqrt{\delta(K)}\geq \sqrt{m/2}+2C_0\sqrt{t}\geq \sqrt{m-\delta(L_+(1/k))}+C_0\sqrt{t}
	\end{align*}
	for all $k$ large enough (that may depend on $m,t$), so combined with (\ref{ineq:logistic_2}) and (\ref{ineq:logistic_3}) we have
	\begin{align*}
	\Prob\big(\hbox{The cone constrained MLE $\hat{\beta}_K$ exists}\big)\leq e^{-t},
	\end{align*}
	as desired. 
	
	\noindent (2). Using Proposition \ref{prop:logistic_exist}-(2) and Theorem \ref{thm:approx_kinematics}-(1) and the fact that $\delta(\R_{\geq 0}^m)=m/2$, there exist some universal constants $C_1,c>0$ such that for all $t\geq 1$,
	\begin{align*}
	&\Prob\big(\hbox{The cone constrained MLE $\hat{\beta}_K$ does not exist}\big)\\
	&\leq \bm{1}\Big(\sqrt{\delta(K)}>\sqrt{m/2}-C_1\sqrt{t}\Big)+e^{-t}+2e^{-c(\sqrt{m}-\sqrt{\delta(K)})^2}. 
	\end{align*}
	Now under the condition that $\sqrt{\delta(K)}\leq \sqrt{m/2}-C_1\sqrt{t}$, the above display can be further simplified as
	\begin{align*}
	\Prob\big(\hbox{The cone constrained MLE $\hat{\beta}_K$ does not exist}\big)\leq e^{-t}+2e^{-(c/4)m}. 
	\end{align*}
	The second term on the right hand side of the above display can be assimilated into the first term upon adjusting constants, as the effective range of $t$ is $1\leq t\leq m/(2C_1^2)$. This completes the proof.
\end{proof}

\subsection{Proofs of Theorem \ref{thm:conic_program}}\label{section:proof_conic_program}

For any $x \in \partial B_n$, recall $
K_x= K\cap \{\mu \in \R^n: \iprod{x}{\mu}\geq 0\}$ defined in (\ref{def:K_x}), and let for any further $a>0$
\begin{align*}
K(x;a)&\equiv \mathrm{cl}\big(\big\{\mu/\pnorm{\mu}{}: \iprod{x}{\mu}=a, \mu \in K\setminus \{0\}\big\}\big)\subset \partial B_n. 
\end{align*}
\begin{lemma}\label{lem:conic_program_1}
	For any $x \in \partial B_n$ with $x \notin K^\circ$, we have $K(x;a)=K_x\cap \partial B_n$ for any $a>0$. 
\end{lemma}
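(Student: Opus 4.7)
The plan is to prove the two set inclusions separately, with the main work lying in the reverse direction where the hypothesis $x \notin K^\circ$ plays a crucial role.

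For the forward inclusion $K(x;a) \subseteq K_x \cap \partial B_n$, I would start with any $\nu$ of the form $\mu/\pnorm{\mu}{}$ where $\mu \in K \setminus \{0\}$ satisfies $\iprod{x}{\mu} = a > 0$. By the cone property, $\nu \in K$; clearly $\pnorm{\nu}{} = 1$; and $\iprod{x}{\nu} = a/\pnorm{\mu}{} > 0$, so $\nu \in K_x \cap \partial B_n$. Since $K_x \cap \partial B_n$ is closed, taking closures preserves the inclusion.

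For the reverse inclusion $K_x \cap \partial B_n \subseteq K(x;a)$, fix any $\nu \in K \cap \partial B_n$ with $\iprod{x}{\nu} \geq 0$, and split into two cases. If $\iprod{x}{\nu} > 0$, then $\mu \equiv (a/\iprod{x}{\nu})\nu \in K$ satisfies $\iprod{x}{\mu} = a$ and $\mu/\pnorm{\mu}{} = \nu$, so $\nu$ lies in the set whose closure defines $K(x;a)$. The delicate case is $\iprod{x}{\nu} = 0$, where a direct rescaling fails; here I would use the hypothesis $x \notin K^\circ$ to produce some $\mu_0 \in K$ with $\iprod{x}{\mu_0} > 0$, and then consider the perturbation $\mu_k \equiv \nu + k^{-1}\mu_0 \in K$ (using that $K$ is a convex cone and is therefore closed under addition). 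Then $\iprod{x}{\mu_k} = k^{-1}\iprod{x}{\mu_0} > 0$, so after rescaling to the level set $\{\iprod{x}{\cdot} = a\}$, the resulting normalized vector $\mu_k/\pnorm{\mu_k}{}$ lies in the pre-closure set. Since $\mu_k \to \nu$ and $\pnorm{\nu}{} = 1$, we obtain $\mu_k/\pnorm{\mu_k}{} \to \nu$, placing $\nu$ in the closure $K(x;a)$.

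The main (and essentially only) obstacle is the boundary case $\iprod{x}{\nu} = 0$. Without the assumption $x \notin K^\circ$, one could have $K \subseteq \{\iprod{x}{\cdot} \leq 0\}$, in which case $K_x \cap \partial B_n$ would consist precisely of unit vectors in $K$ with $\iprod{x}{\nu} = 0$, while $K(x;a)$ would be empty for $a > 0$ since no $\mu \in K$ satisfies $\iprod{x}{\mu} = a > 0$. The hypothesis $x \notin K^\circ$ exactly rules this out by guaranteeing an interior direction $\mu_0$ along which to perturb, making the approximation argument work.
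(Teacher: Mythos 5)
Your proof is correct and follows essentially the same route as the paper: the forward inclusion is immediate from the cone property, and the reverse inclusion is handled by perturbing a boundary point $\nu$ with $\iprod{x}{\nu}=0$ toward a direction $\mu_0\in K$ with $\iprod{x}{\mu_0}>0$ (guaranteed by $x\notin K^\circ$) and passing to the closure. The paper phrases this by asserting an approximating sequence in $K\cap\{\iprod{x}{\cdot}>0\}$ after rotating $x$ to $e_1$, whereas you write the perturbation $\nu+k^{-1}\mu_0$ explicitly; the content is identical.
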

\begin{proof}
	The fact that $K(x;a_1)=K(x;a_2)$ for any $a_1,a_2>0$ follows from the cone property of $K$, so we shall focus on the case $K(x;1)$ with $a=1$ without loss of generality. By definition of $K(x;1)$ we have $K(x;1)\subset K_x\cap \partial B_n$, so we only need to prove the converse direction. Upon using a suitable orthonormal transformation, we may assume without loss of generality that $x = e_1$. This means that
	\begin{align*}
	K_x= K\cap \{v \in \R^n: v_1\geq 0\},\quad K(x;1)=\mathrm{cl}\big(\big\{\mu/\pnorm{\mu}{}: \mu_1=1, \mu \in K\setminus \{0\}\big\}\big).
	\end{align*}
	Take any $v \in K_x\cap \partial B_n$. By the assumption $x \notin K^\circ$, there exists some $\mu \in K$ such that $\mu_1=\iprod{x}{\mu}>0$. This means $K\cap \{v \in \R^n: v_1>0\}\neq \emptyset$. So for $v \in K_x \cap \partial B_n\subset K\cap \{v \in \R^n: v_1\geq 0\}$, we may find some sequence $\{v^{(\ell)}\}_{\ell \in \N}\subset K\cap \{v \in \R^n: v_1>0\}$ such that $v^{(\ell)} \to v$. This convergence implies the norm convergence $\pnorm{v^{(\ell)}}{}\to \pnorm{v}{}=1$, and consequently $v^{(\ell)}/\pnorm{v^{(\ell)}}{}\to v$. Now by defining $\mu^{(\ell)}\equiv v^{(\ell)}/v^{(\ell)}_1$, we have $\mu^{(\ell)}_1=1,\mu^{(\ell)}\in K\setminus \{0\}$ and $\mu^{(\ell)}/\pnorm{\mu^{(\ell)}}{}= v^{(\ell)}/\pnorm{v^{(\ell)}}{}\to v$. This verifies that $v \in K(x;1)$, as desired.
\end{proof}

With all the preparations, we may prove Theorem \ref{thm:conic_program}. 
\begin{proof}[Proof of Theorem \ref{thm:conic_program}]
	For any $R>0$, let $K_n(R)\equiv K\cap B_n(R)$. Using (\ref{ineq:support_fcn_cgmt_01}) in Proposition \ref{prop:support_fcn_cgmt}, we have for any $x \in \partial B_n$ and $z \in \R$,
	\begin{align}\label{ineq:conic_program_0}
	\Prob\Big(\max_{\mu \in K_n(R): G\mu \in L}\iprod{x}{\mu}\geq z\Big)&\leq 2 \Prob\bigg(\sup_{\mu \in K_n(R), \iprod{g}{\mu}\geq \pnorm{\mu}{} \pnorm{\proj_L^\perp h}{}} \iprod{x}{\mu}\geq z\bigg),\nonumber\\
	\Prob\Big(\max_{\mu \in K_n(R): G\mu \in L }\iprod{x}{\mu}< z\Big)&\leq 2 \Prob\bigg(\sup_{\mu \in K_n(R), \iprod{g}{\mu}\geq \pnorm{\mu}{} \pnorm{\proj_L^\perp h}{}} \iprod{x}{\mu}< z\bigg).
	\end{align}	
	Note that $\max_{\mu \in K_n(R): G\mu \in L}\iprod{x}{\mu}$ and $\sup_{\mu \in K_n(R), \iprod{g}{\mu}\geq \pnorm{\mu}{} \pnorm{\proj_L^\perp h}{}} \iprod{x}{\mu}$ either take value $0$ for all $R>0$, or blow up to $\infty$ as $R\uparrow \infty$. Further note that 
	\begin{align*}
	\sup_{\mu \in K, \iprod{g}{\mu}\geq \pnorm{\mu}{} \pnorm{\proj_L^\perp h}{}} \iprod{x}{\mu}& = 0\vee \sup \bigg\{a>0: \sup_{\mu \in K, \iprod{x}{\mu}=a} \big(\iprod{g}{\mu}-\pnorm{\mu}{}\pnorm{\proj_L^\perp h}{}\big)\geq 0\bigg\}\\
	& = 0 \vee \sup\bigg\{a>0: \sup_{\mu \in K(x;a)}\iprod{g}{\mu}\geq \pnorm{\proj_L^\perp h}{}\bigg\} \\
	&= \sup_{\mu \in K} \iprod{x}{\mu}\cdot\bm{1}\big(\sup_{\mu \in K_x\cap \partial B_n}\iprod{g}{\mu}\geq \pnorm{\proj_L^\perp h}{}\big),
	\end{align*}
	where in the last equality we used Lemma \ref{lem:conic_program_1}, and we interpret $\infty\cdot 0=0$.  Now taking limit as $R\uparrow \infty$ for free in (\ref{ineq:conic_program_0}), we have for any $x \in \partial B_n$ and $z \in \R$,
	\begin{align}\label{ineq:conic_program_1}
	\Prob\Big(\max_{\mu \in K: G\mu \in L}\iprod{x}{\mu}\geq z\Big)&\leq 2 \Prob\bigg(\sup_{\mu \in K} \iprod{x}{\mu}\cdot\bm{1}\big(\sup_{\mu \in K_x\cap \partial B_n}\iprod{g}{\mu}\geq \pnorm{\proj_L^\perp h}{}\big)\geq z\bigg),\nonumber\\
	\Prob\Big(\max_{\mu \in K: G\mu \in L}\iprod{x}{\mu}< z\Big)&\leq 2 \Prob\bigg(\sup_{\mu \in K} \iprod{x}{\mu}\cdot\bm{1}\big(\sup_{\mu \in K_x\cap \partial B_n}\iprod{g}{\mu}\geq \pnorm{\proj_L^\perp h}{}\big)< z\bigg).
	\end{align}
	Similar arguments using (\ref{ineq:support_fcn_cgmt_01}) in Proposition \ref{prop:support_fcn_cgmt} leads the one-sided inequality for the uniform version: for any $z \in \R$,
	\begin{align}\label{ineq:conic_program_1_1}
	\Prob\Big(\max_{\mu \in K: G\mu \in L}\pnorm{\mu}{}\geq z\Big)&\leq 2 \Prob\bigg(\sup_{\mu \in K, \iprod{g}{\mu}\geq \pnorm{\mu}{} \pnorm{\proj_L^\perp h}{}} \pnorm{\mu}{}\geq z\bigg).
	\end{align}

	\noindent (\textbf{Homogeneous case $b=0$}). In this case $L=\{0\}$ so $\proj_L^\perp h=h$. Clearly for $x \in K^\circ$, $\max_{\mu \in K: G\mu=0}\iprod{x}{\mu}=0$ regardless of the relationship between $m$ and $K$. Now we consider the case $x \notin K^\circ$. By Lemma \ref{lem:cone_polarity}, $\sup_{\mu \in K}\iprod{x}{\mu}=\infty$, so by taking $z=\epsilon>0$ in the two displays in (\ref{ineq:conic_program_1}), 
	\begin{align}\label{ineq:conic_program_2}
	\Prob\Big( \max_{\mu \in K: G\mu=0}\iprod{x}{\mu}\geq \epsilon\Big)&\leq 2\Prob\bigg(\sup_{\mu \in K_x\cap \partial B_n}\iprod{g}{\mu}\geq \pnorm{h}{}\bigg),\nonumber\\
	\Prob\Big( \max_{\mu \in K: G\mu=0}\iprod{x}{\mu}<\epsilon\Big)&\leq 2\Prob\bigg(\sup_{\mu \in K_x\cap \partial B_n}\iprod{g}{\mu}< \pnorm{h}{}\bigg).
	\end{align}
	As $x\notin K^\circ$ and $K$ is non-trivial, $K_x$ is also non-trivial. By Gaussian concentration in Proposition \ref{prop:gaussian_conc_generic} and Proposition \ref{prop:stat_dim}-(3)(4), there exists some universal constant $C>0$ such that for any $t\geq 1$,
	\begin{align}\label{ineq:conic_program_3}
	\bigabs{\sup_{\mu \in K_x\cap \partial B_n}\iprod{g}{\mu}- \sqrt{\delta(K_x)}}\vee \bigabs{\pnorm{h}{}-\sqrt{m}}\leq C\sqrt{t}.
	\end{align}
	Combining the above two displays (\ref{ineq:conic_program_2})-(\ref{ineq:conic_program_3}) and noting that $\max_{\mu \in K: G\mu=0}\iprod{x}{\mu}\in \{0,\infty\}$, for $x \notin K^\circ$ and any $ t\geq 1$, 
	\begin{align}\label{ineq:conic_program_4}
	\sqrt{m}\geq \sqrt{\delta(K_x)}+C\sqrt{t} \,\Rightarrow\, \Prob\Big( \max_{\mu \in K: G\mu=0}\iprod{x}{\mu}=0\Big)\geq 1-e^{-t},\nonumber\\
	\sqrt{m}\leq \sqrt{\delta(K_x)}-C\sqrt{t}\,\Rightarrow\, \Prob\Big( \max_{\mu \in K: G\mu=0}\iprod{x}{\mu}=\infty\Big)\geq 1-e^{-t}. 
	\end{align}
	The claim now follows by noting that the case $x\in K^\circ$ can be merged into the first inequality above. 
	
	\noindent (\textbf{In-homogeneous case $b\neq 0$}). In this case we take $L\equiv L_b\equiv \{tb:t\geq 0\}$. As 
	\begin{align*}
	\sup_{\mu \in K, \iprod{g}{\mu}\geq \pnorm{\mu}{} \pnorm{\proj_{L_b}^\perp h}{}} \pnorm{\mu}{} =
	\begin{cases}
	0,& \sup_{\mu \in K\cap \partial B_n} \iprod{g}{\mu}< \pnorm{\proj_{L_b}^\perp h}{}\\
	\infty, & \sup_{\mu \in K\cap \partial B_n} \iprod{g}{\mu}\geq \pnorm{\proj_{L_b}^\perp h}{}
	\end{cases}
	,
	\end{align*}
	by using (\ref{ineq:conic_program_1_1}) it follows that for any $\epsilon>0$,
	\begin{align*}
	\Prob\Big(\sup_{\mu \in K: G\mu \in L_b}\pnorm{\mu}{}\geq \epsilon\Big)&\leq 2 \Prob\bigg(\sup_{\mu \in K\cap \partial B_n} \iprod{g}{\mu}\geq \pnorm{\proj_{L_b}^\perp h}{}\bigg).
	\end{align*}
	Taking $\epsilon\downarrow 0$ and using similar Gaussian concentration as in (\ref{ineq:conic_program_3}), there exists some universal constant $C>0$ such that for any $t\geq 1$,
	\begin{align}\label{ineq:conic_program_5}
	\sqrt{m}\geq \sqrt{\delta(K)}+C\sqrt{t}\,\Rightarrow\, \Prob\Big(\sup_{\mu \in K: G\mu \in L_b}\pnorm{\mu}{}=0\Big)\geq 1-e^{-t}. 
	\end{align}
	On the other hand, for $K\neq \R^n$, we may find some $x \in \partial B_n$ such that $K_x=K$, so the second inequality of (\ref{ineq:conic_program_4}) entails 
	\begin{align}\label{ineq:conic_program_6}
	\sqrt{m}\leq \sqrt{\delta(K)}-C\sqrt{t}\,\Rightarrow\, \Prob\Big(\sup_{\mu \in K: G\mu \in L_b}\pnorm{\mu}{}=\infty\Big)\geq 1-e^{-t}. 
	\end{align}
	If $K=\R^n$, the above display also holds as $G \R^n \cap L_b \neq \{0\}$ with the prescribed probability by Theorem \ref{thm:approx_kinematics}. Now combining (\ref{ineq:conic_program_4})-(\ref{ineq:conic_program_6}), we have the following:
	\begin{enumerate}
		\item If $\sqrt{m}\geq \sqrt{\delta(K)}+C\sqrt{t}$, then
		\begin{align*}
		\Prob\Big(\big\{\mu \in K: G\mu \in L_b\big\}=\{0\}\Big)\geq 1-e^{-t}.
		\end{align*}
		In this case, the CP (\ref{def:conic_program}) is infeasible. 
		\item If $\sqrt{\delta(K_x)}+C\sqrt{t}\leq \sqrt{m}\leq \sqrt{\delta(K)}-C\sqrt{t}$, then 
		\begin{align*}
		\Prob\Big(\big\{\mu \in K: G\mu \in L_b\big\}\neq\{0\}, \max_{\mu \in K: G\mu \in L_b}\iprod{x}{\mu}=0\Big)\geq 1-e^{-t}.
		\end{align*}
		In this case, the CP (\ref{def:conic_program}) is feasible with a non-positive cost optimum. 
		\item If $\sqrt{m}\leq \sqrt{\delta(K_x)}-C\sqrt{t}$, then 
		\begin{align*}
		\Prob\Big( G K_x \cap L_b\neq \{0\}, \max_{\mu \in K: G\mu=0}\iprod{x}{\mu}=\infty
		\Big)\geq 1-e^{-t}.
		\end{align*}
		In this case, the CP (\ref{def:conic_program}) is feasible with $\max_{\mu \in K: G\mu=b}\iprod{x}{\mu}=\infty$. 
	\end{enumerate}
    The claim follows. 
\end{proof}

\subsection{Proof of Theorem \ref{thm:local_dm}}\label{section:proof_DM}

We first give an upper estimate for $\mathsf{h}_{L\cap G K_n}(x)$.

\begin{proposition}\label{prop:DM_upper}
	Suppose that $K \subset \R^n$ is a closed convex cone and $L\subset \R^m$ is a subspace with $K\neq \{0\}, L\neq \{0\}$. Fix $x \in L\cap \partial B_m$. Then there exists some universal constant $C>0$ such that for any $t\geq 1$,
	\begin{align*}
	\Prob\Big(\mathsf{h}^2_{L\cap G K_n}(x)>\Big\{\delta(K)-\delta(L^\circ)+C\sqrt{t}\cdot \Big(\sqrt{\delta(K)}\vee \sqrt{\delta(L^\circ)}\Big)+C t\Big\}_+\Big)\leq e^{-t}. 
	\end{align*}
	Here $K_n\equiv K\cap B_n$. 
\end{proposition}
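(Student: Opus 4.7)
The plan is to extend the CGMT-based approach from the proof of Proposition \ref{prop:support_fcn_proj_upper}, exploiting that $L^\circ = L^\perp$ is itself a subspace to obtain a sharp Gordon reduction in closed form. Starting from the Lagrangian representation
\begin{align*}
\mathsf{h}_{L\cap GK_n}(x) = \sup_{\mu \in K_n}\inf_{v \in L^\circ}\iprod{x-v}{G\mu},
\end{align*}
I would apply the one-sided CGMT (Theorem \ref{thm:CGMT}-(1)) with the usual localization $v\in L^\circ\cap B_m(R)$ and $R\uparrow\infty$, replacing $(x-v)^\top G\mu$ by the Gordon surrogate $\pnorm{x-v}{}\iprod{g}{\mu}+\pnorm{\mu}{}\iprod{h}{x-v}$ with fresh $g\sim\mathcal{N}(0,I_n), h\sim\mathcal{N}(0,I_m)$. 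The key closed-form computation uses the orthogonality $x\in L, v\in L^\perp$ to decompose $\pnorm{x-v}{}^2=1+\pnorm{v}{}^2$ and $\iprod{h}{x-v}=\iprod{h}{x}-\iprod{\proj_L^\perp(h)}{v}$; parametrizing $v=\beta u$ with $\beta\geq 0, u\in L^\perp\cap\partial B_m$, the $u$-infimum gives $\pnorm{\proj_L^\perp(h)}{}$ by duality, and the one-dimensional $\beta$-infimum of $\sqrt{1+\beta^2}\iprod{g}{\mu}-\beta\pnorm{\mu}{}\pnorm{\proj_L^\perp(h)}{}$ is solved in closed form (analogously to Lemma \ref{lem:fcn_P}): it equals $\sqrt{\iprod{g}{\mu}^2-\pnorm{\mu}{}^2\pnorm{\proj_L^\perp(h)}{}^2}$ when $\iprod{g}{\mu}\geq \pnorm{\mu}{}\pnorm{\proj_L^\perp(h)}{}$ and $-\infty$ otherwise. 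Combining with the outer $\pnorm{\mu}{}\iprod{h}{x}$ term and optimizing $\mu=\beta\nu$ over $\beta\in[0,1], \nu\in K\cap\partial B_n$ yields the pointwise bound
\begin{align*}
\mathsf{h}_{L\cap GK_n}(x) \stackrel{\Prob}{\leq} \left(\iprod{h}{x}_+ + \sqrt{[A_0^2 - B_0^2]_+}\right)\cdot \bm{1}(A_0 \geq B_0),
\end{align*}
where $A_0\equiv\sup_{\nu\in K\cap\partial B_n}\iprod{g}{\nu}$ and $B_0\equiv\pnorm{\proj_L^\perp(h)}{}$; crucially, the three quantities $\iprod{h}{x}=\iprod{\proj_L(h)}{x}, A_0, B_0$ are mutually independent because $\proj_L(h)\perp\proj_L^\perp(h)$ and $g\perp h$.

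I would next apply sharp concentration to each independent piece. The Gaussian tail gives $\iprod{h}{x}_+^2\leq 2t$; Proposition \ref{prop:gaussian_conc_generic} with $\gw(K\cap\partial B_n)\leq\sqrt{\delta(K)}$ (Proposition \ref{prop:stat_dim}-(3)(4)) gives $A_0^2\leq \delta(K)+C\sqrt{t\delta(K)}+Ct$; and the Laurent--Massart chi-squared lower tail gives $B_0^2\geq \delta(L^\circ)-2\sqrt{t\delta(L^\circ)}$ (using $B_0^2\sim\chi^2_{\delta(L^\circ)}$, since for a subspace $\dim L^\perp = \delta(L^\circ)$), each with probability $\geq 1-e^{-t}$. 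On the intersection good event,
\begin{align*}
[A_0^2-B_0^2]_+\leq \left\{\delta(K)-\delta(L^\circ)+C\sqrt{t}\cdot(\sqrt{\delta(K)}\vee\sqrt{\delta(L^\circ)})+Ct\right\}_+.
\end{align*}

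Finally, the two cases from the indicator are handled separately. On $\{A_0<B_0\}$ the indicator kills everything so the upper bound is exactly $0$; on $\{A_0\geq B_0\}$ the expression $A_0^2-B_0^2$ is automatically nonnegative and the \emph{exact} identity $(a+b)^2=a^2+2ab+b^2$ gives $\Phi^{a,2}\leq (\sqrt{A_0^2-B_0^2}+\iprod{h}{x}_+)^2 \leq (A_0^2-B_0^2) + 2\sqrt{2t(A_0^2-B_0^2)} + 2t$, with coefficient $1$ on the main term. Using $\sqrt{a+b}\leq\sqrt{a}+\sqrt{b}$ and Young's inequality $t^{3/4}\delta^{1/4}\leq t+\sqrt{t\delta}$, the cross term $2\sqrt{2t(A_0^2-B_0^2)}$ is bounded by $C\sqrt{t}\cdot(\sqrt{\delta(K)}\vee\sqrt{\delta(L^\circ)})+Ct$, and the $2t$ term is absorbed into $Ct$; both fit cleanly into the target correction by enlarging constants. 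The main technical subtlety is precisely this last algebraic accounting: the loose squaring $(a+b)^2\leq 2a^2+2b^2$ would contribute a forbidden factor $2$ on $\delta(K)-\delta(L^\circ)$ that cannot be absorbed, so the exact expansion and the indicator structure (which prevents having to pay $\iprod{h}{x}_+^2$ in the trivial regime) are both essential to produce the stated bound with coefficient $1$ on the leading term.
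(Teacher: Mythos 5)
Your proposal is correct and follows essentially the same route as the paper's proof: apply the one-sided CGMT to the Lagrangian representation $\sup_{\mu}\inf_{v\in L^\circ}\iprod{x-v}{G\mu}$, solve the resulting one-dimensional inner infimum in closed form by exploiting that $L^\circ=L^\perp$ is a subspace (this is exactly the content of the paper's Lemma \ref{lem:fcn_Q}, in a slightly different parametrization), and then conclude by Gaussian concentration of $\sup_{\mu\in K\cap\partial B_n}\iprod{g}{\mu}$, $\pnorm{\proj_L^\perp h}{}$ and $\iprod{h}{x}$. Your substitution of the Laurent--Massart $\chi^2$ tail for the paper's generic Gaussian concentration, and your explicit final squaring argument (which the paper compresses into ``adjusting constants''), are only cosmetic differences.
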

\begin{proof}
	We shall write $\mathsf{h}\equiv \mathsf{h}_{L\cap G K_n}$ for notational convenience in the proof.  Using the same arguments as in the proof of (\ref{ineq:pointwise_support_fcn}) (a pointwise version of Proposition \ref{prop:support_fcn_proj_upper}) up to (\ref{ineq:support_fcn_proj_upper_1_2}),
	for any $z \in \R$ and $R>0$,
	\begin{align}\label{ineq:DM_upper_1}
	\Prob\big(\mathsf{h}(x)>z\big)
	&\leq 2 \Prob\bigg(\sup_{\mu \in K_n}  \inf_{1\leq \beta \leq R} \beta\bigg\{\iprod{g}{\mu}- \pnorm{\mu}{}\sup_{w \in L^\circ(\beta;x)} \iprod{h}{w} \bigg\}>z   \bigg),
	\end{align}
	where recall $L^\circ(\beta;x)=\{(v-x)/\pnorm{v-x}{}:v \in L^\circ, \pnorm{v-x}{}=\beta\}$ is defined in (\ref{ineq:support_fcn_proj_upper_1_1}). For subspace $L$ and $x \in L\cap \partial B_m$, $L^\circ(\beta;x)$ can be computed exactly: With $\ell^\circ\equiv \dim(L^\circ)+1\leq m$, there exists some orthogonal matrix $O_x \in \R^{m\times m}$ such that 
	\begin{align*}
	L^\circ(\beta;x)=O_x\big\{v \in \R^m: \pnorm{v}{}=1, v|_{(\ell^\circ:m]}=0, v_1 = 1/\beta\big\}.
	\end{align*}
	Consequently, with $h^x\equiv O_x^\top h \sim \mathcal{N}(0,I_m)$,
	\begin{align}\label{ineq:DM_upper_2}
	&\sup_{\mu \in K_n} \inf_{1\leq \beta \leq R} \beta\bigg\{\iprod{g}{\mu}- \pnorm{\mu}{}\sup_{w \in L^\circ(\beta;x)} \iprod{h}{w} \bigg\}\nonumber\\
	&=\sup_{\mu \in K_n} \inf_{1\leq \beta \leq R} \beta\bigg\{\iprod{g}{\mu}- \pnorm{\mu}{}\cdot \bigg(\frac{1}{\beta}h_1^x + \sqrt{1-\frac{1}{\beta^2}} \cdot \pnorm{h_{[2:\ell^\circ]}^x }{}\bigg) \bigg\}\nonumber\\
	&\leq \sup_{\mu \in K,\iprod{\mu}{g}\geq 0} \inf_{1\leq \beta \leq R} \Big\{ \beta\iprod{g}{\mu}- \sqrt{\beta^2-1}\cdot \pnorm{\mu}{} \pnorm{h_{[2:\ell^\circ]}^x }{} \Big\}+\abs{h_1^x}.
	\end{align}
	By Lemma \ref{lem:fcn_Q} below, the first term above can be further bounded from above by
	\begin{align*}
	&\bigg\{\sup_{\mu \in K_n,\iprod{\mu}{g}\geq 0}\Big(\iprod{g}{\mu}^2-\pnorm{\mu}{}^2\pnorm{h_{[2:\ell^\circ]}^x }{}^2\Big)_+ \bigg\}^{1/2}+ \frac{C \pnorm{h_{[2:\ell^\circ]}^x }{} }{R}\\
	& = \bigg\{\bigg(\sup_{\mu \in K_n}\iprod{g}{\mu}\bigg)^2- \pnorm{h_{[2:\ell^\circ]}^x }{}^2\bigg\}_+^{1/2}+ \frac{C \pnorm{h_{[2:\ell^\circ]}^x }{} }{R}.
	\end{align*}
	Combined with (\ref{ineq:DM_upper_2}), we have
	\begin{align*}
	&\sup_{\mu \in K_n} \inf_{1\leq \beta \leq R} \beta\bigg\{\iprod{g}{\mu}- \pnorm{\mu}{}\sup_{w \in L^\circ(\beta;x)} \iprod{h}{w} \bigg\}\nonumber\\
	&\leq \bigg\{\bigg(\sup_{\mu \in K_n}\iprod{g}{\mu}\bigg)^2- \pnorm{h_{[2:\ell^\circ]}^x }{}^2\bigg\}_+^{1/2} + \frac{C \pnorm{h_{[2:\ell^\circ]}^x }{} }{R}+\abs{h_1^x}.
	\end{align*}
	Using (\ref{ineq:DM_upper_1}) and the above display, we have 
	\begin{align*}
	\Prob\big(\mathsf{h}(x)>z\big)
	&\leq 2 \Prob\bigg(\bigg\{\bigg(\sup_{\mu \in K_n}\iprod{g}{\mu}\bigg)^2- \pnorm{h_{[2:\ell^\circ]}^x }{}^2\bigg\}_+^{1/2} + \frac{C \pnorm{h_{[2:\ell^\circ]}^x }{} }{R}+\abs{h_1^x}>z   \bigg).
	\end{align*}
	Taking $R\uparrow \infty$, we obtain the estimate
	\begin{align}\label{ineq:DM_upper_3}
	\Prob\big(\mathsf{h}(x)>z\big)
	&\leq 2 \Prob\bigg(\bigg\{\bigg(\sup_{\mu \in K_n}\iprod{g}{\mu}\bigg)^2- \bigg(\sup_{v \in L^\circ\cap B_m} \iprod{h}{v}\bigg)^2\bigg\}_+^{1/2} +\abs{h_1^x}\geq z   \bigg).
	\end{align}
	Using Gaussian concentration as in Proposition \ref{prop:gaussian_conc_generic}, there exists some universal constant $C_0>0$ such that for any $t\geq 1$, with probability at least $1-e^{-t}$,
	\begin{align*}
	&\bigg\{\bigg(\sup_{\mu \in K\cap B_n}\iprod{g}{\mu}\bigg)^2-\bigg(\sup_{v \in L^\circ\cap B_m} \iprod{h}{v}\bigg)^2\bigg\}_+^{1/2}\\
	& \leq \Big(\gw(K\cap B_n)-\gw(L^\circ\cap B_m)+C_0\sqrt{t}\Big)_+^{1/2}\cdot \Big(\gw(K\cap B_n)+\gw(L^\circ\cap B_m)+C_0\sqrt{t}\Big)_+^{1/2}\\
	& \leq \Big\{\delta(K)-\delta(L^\circ)+C_0\sqrt{t}\cdot \Big(\sqrt{\delta(K)}\vee \sqrt{\delta(L^\circ)}\Big)+C_0 t\Big\}_+^{1/2}.
	\end{align*}
	Consequently the right hand side of (\ref{ineq:DM_upper_3}) can be bounded by
	\begin{align*}
	\bm{1}\bigg(\Big\{\delta(K)-\delta(L^\circ)+C\sqrt{t}\cdot \Big(\sqrt{\delta(K)}\vee \sqrt{\delta(L^\circ)}\Big)+C t\Big\}_+^{1/2}\geq z \bigg)+ Ce^{-t/C}.
	\end{align*}
	The proof is complete by possibly adjusting constants. 
\end{proof}

The proof of Proposition \ref{prop:DM_upper} above makes use of the following result. 

\begin{lemma}\label{lem:fcn_Q}
	Let $a=(a_1,a_2)^\top \in \R_{\geq 0}^2$, and let $\mathsf{Q}_a:\R_{\geq 1}\to \R$ be defined by 
	\begin{align*}
	\mathsf{Q}_a(\beta)\equiv a_1\beta-a_2\sqrt{\beta^2-1},\, \beta\geq 1.
	\end{align*}
	Then there exists some universal constant $C>0$ such that for any $R>1$,
	\begin{align*}
	\bigg(\inf_{1\leq \beta \leq R}\mathsf{Q}_a(\beta)-\sqrt{(a_1^2-a_2^2)_+}\bigg)_+\leq \frac{Ca_2}{R}. 
	\end{align*}
	Moreover,
	\begin{align*}
	\inf_{\beta \geq 1}\mathsf{Q}_a(\beta)=
	\begin{cases}
	\sqrt{a_1^2-a_2^2},& a_1\geq a_2,\\
	-\infty, & a_1<a_2.
	\end{cases}
	\end{align*}
\end{lemma}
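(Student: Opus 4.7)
The plan is to reduce the lemma to one-variable calculus on $\mathsf{Q}_a$ together with a single completing-the-square identity.

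First I would analyze the derivative
\[
\mathsf{Q}_a'(\beta) = a_1 - a_2 \cdot \frac{\beta}{\sqrt{\beta^2-1}}, \qquad \beta > 1,
\]
and observe that $\beta \mapsto \beta/\sqrt{\beta^2-1}$ strictly decreases from $+\infty$ at $\beta = 1^+$ to $1$ as $\beta \to \infty$, so $\mathsf{Q}_a'$ strictly increases from $-\infty$ to $a_1 - a_2$. For the closed-form assertion: when $a_1 \geq a_2$, $\mathsf{Q}_a'$ has a unique interior zero $\beta^* = a_1/\sqrt{a_1^2 - a_2^2} \in (1, \infty]$ (with $\beta^* = \infty$ in the limiting case $a_1 = a_2$), and direct substitution yields $\mathsf{Q}_a(\beta^*) = \sqrt{a_1^2 - a_2^2}$, which is then the global minimum on $[1, \infty)$; when $a_1 < a_2$, the leading behaviour $\mathsf{Q}_a(\beta) \sim (a_1 - a_2)\beta$ forces $\mathsf{Q}_a \to -\infty$.

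For the quantitative inequality, the engine is the identity
\[
\mathsf{Q}_a(R)^2 - (a_1^2 - a_2^2) = \big(a_2 R - a_1\sqrt{R^2-1}\big)^2,
\]
which one verifies by direct expansion. I would then split into three cases: (i) if $a_1 \geq a_2$ and $\beta^* \leq R$, the interior minimum already lies inside $[1, R]$, so $\inf_{[1,R]} \mathsf{Q}_a = \sqrt{a_1^2 - a_2^2}$ and the positive part on the left vanishes; (ii) if $a_1 \geq a_2$ and $\beta^* > R$, which is equivalent to $a_2 R \geq a_1 \sqrt{R^2-1}$, combining the identity with $\sqrt{x + y^2} \leq \sqrt{x} + y$ for $x, y \geq 0$ gives
\[
\mathsf{Q}_a(R) - \sqrt{a_1^2 - a_2^2} \leq a_2 R - a_1\sqrt{R^2-1} \leq a_2\big(R - \sqrt{R^2-1}\big) = \frac{a_2}{R + \sqrt{R^2-1}} \leq \frac{a_2}{R},
\]
where the second step uses $a_1 \geq a_2$; (iii) if $a_1 < a_2$, then $\sqrt{(a_1^2 - a_2^2)_+} = 0$, and evaluating at $\beta = R$ directly gives $\mathsf{Q}_a(R) \leq a_2\big(R - \sqrt{R^2-1}\big) \leq a_2/R$, with the positive-part truncation absorbing the subcase where $\mathsf{Q}_a(R)$ is already negative.

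The argument is routine calculus; the one mildly clever ingredient is spotting the completing-the-square identity, which makes the delicate near-endpoint regime $\beta^* \approx R$ painless and in fact yields the explicit constant $C = 1$.
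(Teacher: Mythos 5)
Your proof is correct, and it shares the paper's overall skeleton (derivative analysis for the closed-form infimum, then a case split according to whether the interior critical point $\beta^*=a_1/\sqrt{a_1^2-a_2^2}$ lies in $[1,R]$), but the key quantitative step is handled by a genuinely different device. The paper's proof of the boundary case $\mathsf{Q}_a'(R)<0$ bounds $\mathsf{Q}_a(R)$ itself: from $a_1<a_2R/\sqrt{R^2-1}=a_2(1+\bigo(1/R^2))$ it deduces $\mathsf{Q}_a(R)=R(a_1-a_2\sqrt{1-1/R^2})\leq Ca_2/R$. That estimate is only valid for $R$ bounded away from $1$ (as $R\downarrow 1$ one can have $\mathsf{Q}_a(R)\asymp a_2/\sqrt{R^2-1}\gg a_2/R$), so the paper's argument, read literally, does not cover all $R>1$ as the lemma states — harmless for its application in Proposition \ref{prop:DM_upper}, where $R\uparrow\infty$, but a genuine gap in the stated generality. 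Your completing-the-square identity
\begin{align*}
\mathsf{Q}_a(R)^2-(a_1^2-a_2^2)=\big(a_2R-a_1\sqrt{R^2-1}\big)^2,
\end{align*}
combined with $\sqrt{x+y^2}\leq \sqrt{x}+y$, bounds the \emph{difference} $\mathsf{Q}_a(R)-\sqrt{(a_1^2-a_2^2)_+}$ directly rather than $\mathsf{Q}_a(R)$, which is exactly what is needed in the regime where both terms are large; it yields the explicit constant $C=1$ uniformly over $R>1$ and thereby repairs the small-$R$ regime. The only cosmetic imprecisions in your write-up are the statement that $\mathsf{Q}_a'$ increases ``from $-\infty$'' (false in the degenerate case $a_2=0$, where the lemma is anyway trivial — the paper has the same slip) and the phrase ``direct substitution'' at $\beta^*=\infty$ when $a_1=a_2$, which should be read as a limit; neither affects correctness.
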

\begin{proof}
	The derivative of $\mathsf{Q}_a$ is easily computed as
	\begin{align*}
	\mathsf{Q}_a'(\beta)= a_1-a_2\frac{\beta}{\sqrt{\beta^2-1}},\, \beta>1.
	\end{align*}
	So $\mathsf{Q}_a'$ is non-decreasing with
	\begin{align*}
	\lim_{\beta\downarrow 1} \mathsf{Q}_a'(\beta)=-\infty,\quad \mathsf{Q}_a'(R)=a_1-a_2\frac{R}{\sqrt{R^2-1}},\quad\lim_{\beta \uparrow \infty} \mathsf{Q}_a'(\beta)=a_1-a_2. 
	\end{align*}
	\noindent \textbf{Case 1}. Suppose $\mathsf{Q}_a'(R)\geq 0$. Then the global infimum of $\mathsf{Q}_a$ is attained at some $\beta_\ast \in [1,R]$ that solves $\mathsf{Q}_a'(\beta_\ast)=0$, i.e., $\beta_\ast=\sqrt{a_1^2/(a_1^2-a_2^2)}$. In other words,
	\begin{align*}
	\inf_{1\leq \beta \leq R} \mathsf{Q}_a'(\beta) = \mathsf{Q}_a'(\beta_\ast)=\sqrt{a_1^2-a_2^2}.
	\end{align*}
	\noindent \textbf{Case 2}. Suppose $\mathsf{Q}_a'(R)<0$. Then $\mathsf{Q}_a'$ is globally non-positive on $[1,R]$, and therefore for any $R>1$, as $a_1<a_2(1+O(1/R^2))$,
	\begin{align*}
	\inf_{1\leq \beta \leq R} \mathsf{Q}_a(\beta) = \mathsf{Q}_a(R) =R\bigg(a_1-a_2 \sqrt{1-\frac{1}{R^2}}\bigg)\leq \frac{C a_2}{R}. 
	\end{align*}
	In this case, as $\sqrt{(a_1^2-a_2^2)_+}\leq Ca_2/R$, 
	\begin{align*}
	\bigg(\inf_{1\leq \beta \leq R}\mathsf{Q}_a(\beta)-\sqrt{(a_1^2-a_2^2)_+}\bigg)_+\leq \frac{Ca_2}{R}. 
	\end{align*}
	Combining the two cases concludes the proof for the claim for finite $R>1$. The case for $R=\infty$ can be argued similarly. In particular, for $a_1>a_2$, the global minimum is computed exactly in the same way as in Case 1 above. For the case $a_1\leq a_2$, 
	\begin{align*}
	\inf_{\beta \geq 1}\mathsf{Q}_a(\beta)&= \lim_{\beta \uparrow\infty} \mathsf{Q}_a(\beta) = \lim_{\beta \uparrow\infty} \beta\cdot \Big(a_1-a_2\sqrt{1-\beta^{-2}}\Big)=
	\begin{cases}
	0,& a_1=a_2;\\
	-\infty, & a_1<a_2.
	\end{cases}
	\end{align*}
	The claim follows by noting that the case $a_1=a_2$ can be assimilated into the first case. 
\end{proof}

We next give a lower estimate for $\mathsf{h}_{L\cap G K_n}(x)$. 
\begin{proposition}\label{prop:DM_lower}
	Suppose that $K \subset \R^n$ is a closed convex cone and $L\subset \R^m$ is a subspace with $K\neq \{0\}, L\neq \{0\}$. Fix $x \in L\cap \partial B_m$. Then there exists some universal constant $C>0$ such that for any $t\geq 1$,
	\begin{align*}
	\Prob\Big(\mathsf{h}^2_{L\cap G K_n}(x)<\Big\{\delta(K)-\delta(L^\circ)-C\sqrt{t}\cdot \Big(\sqrt{\delta(K)}\vee \sqrt{\delta(L^\circ)}\Big)-C t\Big\}_+\Big)\leq e^{-t}. 
	\end{align*}
	Here $K_n\equiv K\cap B_n$. 
\end{proposition}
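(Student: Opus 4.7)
My plan is to follow the template of Proposition \ref{prop:support_fcn_proj_lower}, but to exploit the \emph{subspace} hypothesis on $L$ to remove the wasteful factor-of-two loss present in (\ref{ineq:support_fcn_proj_lower_3}), which is what allows matching the sharp leading order $\delta(K)-\delta(L^\circ)$ of Proposition \ref{prop:DM_upper}. Steps~1 and 2 of the proof of Proposition \ref{prop:support_fcn_proj_lower} transfer essentially unchanged: introducing the Moreau-type surrogate
\[
\mathsf{h}_\epsilon(x)=\sup_{\mu \in K_n}\bigg\{\iprod{x}{G\mu}-\frac{1}{2\epsilon}\Big(\sup_{v \in L^\circ\cap B_m}\iprod{v}{G\mu}\Big)^2\bigg\},
\]
showing $\mathsf{h}_\epsilon(x)\downarrow \mathsf{h}_{L\cap GK_n}(x)$, and applying the convex Gaussian min--max theorem (Theorem~\ref{thm:CGMT}-(2)) with $\epsilon\downarrow 0$, $R\uparrow\infty$, reduces the problem to lower-bounding
\[
\phi(g,h)\equiv \sup_{\mu \in K_n}\inf_{v \in L^\circ}\Big\{-\pnorm{\mu}{}\iprod{h}{v-x}+\pnorm{v-x}{}\iprod{g}{\mu}\Big\}.
\]

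The heart of the proof is a sharp lower bound for $\phi(g,h)$. Because $L^\circ$ is orthogonal to $L$ and $x \in L$, every $v \in L^\circ$ satisfies $\iprod{v}{x}=0$, whence $\pnorm{v-x}{}^2=\pnorm{v}{}^2+1$. Parametrizing $v=\beta w$ with $\beta\geq 0$, $w \in L^\circ\cap \partial B_m$ and minimizing first in $w$ gives, with $M\equiv \pnorm{\Pi_{L^\circ}(h)}{}=\sup_{v \in L^\circ\cap \partial B_m}\iprod{h}{v}$,
\[
\phi(g,h)=\sup_{\mu \in K_n}\bigg\{\pnorm{\mu}{}\iprod{h}{x}+\inf_{\beta \geq 0}\Big[-\pnorm{\mu}{}\beta M+\sqrt{\beta^2+1}\iprod{g}{\mu}\Big]\bigg\}.
\]
The inner univariate minimization has an explicit closed-form solution (a short calculus exercise in the spirit of Lemmas~\ref{lem:fcn_P}--\ref{lem:fcn_Q}): for any $\mu$ with $\iprod{g}{\mu}\geq \pnorm{\mu}{}M$ the infimum equals $\sqrt{\iprod{g}{\mu}^2-\pnorm{\mu}{}^2M^2}$. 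Testing with $\mu=\mu^*$, a unit-norm maximizer of $\iprod{g}{\cdot}$ over $K\cap \partial B_n$ (automatically feasible once $\sup_{\mu \in K\cap \partial B_n}\iprod{g}{\mu}\geq M$), produces the clean bound
\[
\phi(g,h)\geq \bigg(\iprod{h}{x}+\bigg\{\Big(\sup_{\mu \in K\cap B_n}\iprod{g}{\mu}\Big)^2-M^2\bigg\}_+^{1/2}\bigg)_+.
\]

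Finally I would apply Gaussian concentration (Proposition~\ref{prop:gaussian_conc_generic}) to each of $\sup_{\mu \in K\cap B_n}\iprod{g}{\mu}$, $\pnorm{\Pi_{L^\circ}(h)}{}$, and the scalar $\iprod{h}{x}\sim \mathcal{N}(0,1)$, using Proposition~\ref{prop:stat_dim}-(4) together with the identity $\E \pnorm{\Pi_{L^\circ}(h)}{}^2=\delta(L^\circ)$ (valid since $L^\circ$ is a subspace). Squaring the displayed bound and absorbing the cross terms of orders $\sqrt{t}\cdot(\sqrt{\delta(K)}\vee \sqrt{\delta(L^\circ)})$ and $t$ delivers, on an event of probability at least $1-Ce^{-t/C}$,
\[
\phi^2(g,h)\geq \delta(K)-\delta(L^\circ)-C\sqrt{t}\cdot\Big(\sqrt{\delta(K)}\vee \sqrt{\delta(L^\circ)}\Big)-Ct,
\]
whenever the right side is non-negative, and the claim follows after adjusting constants. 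The main obstacle I anticipate is exactly the second paragraph: the general cone case incurs the factor-$1/2$ loss in (\ref{ineq:support_fcn_proj_lower_3}) because the intricate trivariate minimization of Lemma~\ref{lem:fcn_P} admits only a coarse lower bound through a two-case splitting. The subspace assumption converts this to the flat geometry $\iprod{v}{x}\equiv 0$, which yields a clean univariate minimization whose exact minimum $\sqrt{\iprod{g}{\mu}^2-\pnorm{\mu}{}^2M^2}$ is precisely what produces the tight constant in the Dvoretzky--Milman-type conclusion.
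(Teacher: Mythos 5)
Your proposal is correct and follows essentially the same route as the paper: reduce to the Gordon functional via the surrogate and the convex Gaussian min--max theorem (the paper reuses (\ref{ineq:support_fcn_proj_lower_2}) verbatim), exploit $\iprod{v}{x}=0$ for $v\in L^\circ$ to turn the inner minimization into an exact one-dimensional problem with value $\sqrt{\iprod{g}{\mu}^2-\pnorm{\mu}{}^2M^2}$ (your $\inf_{\beta\geq 0}$ problem is the paper's Lemma \ref{lem:fcn_Q} after the substitution $\beta\mapsto\sqrt{\beta^2-1}$), and conclude by Gaussian concentration. The only cosmetic differences are that you parametrize by $\pnorm{v}{}$ rather than $\pnorm{v-x}{}$ and test a single unit-norm maximizer of $\iprod{g}{\cdot}$ instead of retaining the supremum over $\mu$.
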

\begin{proof}
	We will use the estimate in (\ref{ineq:support_fcn_proj_lower_2}) to compute the quantity in the right hand side therein: Recall $L^\circ(\beta;x)$ and $h^x$ as in the proof of Proposition \ref{prop:DM_upper}. We have
	\begin{align*}
	&\sup_{\mu \in K_n}\inf_{v \in L^\circ} \Big\{-\pnorm{\mu}{}\iprod{h}{v-x}+\pnorm{v-x}{}\iprod{g}{\mu} \Big\}\\
	& =\sup_{\mu \in K_n} \bigg[ \inf_{\beta \geq 1} \beta\bigg\{\iprod{g}{\mu}- \pnorm{\mu}{}\sup_{w \in L^\circ(\beta;x)} \iprod{h}{w} \bigg\}\bigg]\\
	&\geq \sup_{\mu \in K,\iprod{\mu}{g}\geq 0} \inf_{\beta \geq 1} \Big\{ \beta\iprod{g}{\mu}- \sqrt{\beta^2-1}\cdot \pnorm{\mu}{} \pnorm{h_{[2:\ell^\circ]}^x }{} \Big\}-\abs{h_1^x}.
	\end{align*}
	Here the last inequality follows from similar arguments as in (\ref{ineq:DM_upper_2}) in the proof of Proposition \ref{prop:DM_upper}. By Lemma \ref{lem:fcn_Q}, the first term above equals
	\begin{align*}
	&\bigg\{\sup_{\mu \in K_n,\iprod{\mu}{g}\geq 0}\Big(\iprod{g}{\mu}^2-\pnorm{\mu}{}^2\pnorm{h_{[2:\ell^\circ]}^x }{}^2\Big)_+ \bigg\}^{1/2} = \bigg\{\bigg(\sup_{\mu \in K_n}\iprod{g}{\mu}\bigg)^2- \pnorm{h_{[2:\ell^\circ]}^x }{}^2\bigg\}_+^{1/2}.
	\end{align*}
	The remaining proof follows from the same lines as in Proposition \ref{prop:DM_upper}. 
\end{proof}

Now we are in a good position to prove Theorem \ref{thm:local_dm}. 
\begin{proof}[Proof of Theorem \ref{thm:local_dm}]
	We write $S_k\equiv \Pi_{m\to k} L$. By Propositions \ref{prop:DM_upper} and \ref{prop:DM_lower}, for any $x \in S_k\cap \partial B_k$, there exists some $C_0=C_0(\tau)>0$ such that for any $t\geq 1$, with probability at least $1-e^{-t}$, we have
	\begin{align*}
	\bigabs{\mathsf{h}_{L\cap G K_n}(x)- \sqrt{\delta(K)-\delta(L^\circ)}}\leq C_0\sqrt{t}. 
	\end{align*}
	Now choosing $t\equiv \epsilon^2(\delta(K)-\delta(L^\circ))/(4C_0^2)\geq c \epsilon^2 \delta(K)$, for any $x \in S_k\cap \partial B_k$ with probability at least $1-\exp(-c\epsilon^2\delta(K))$,
	\begin{align}\label{ineq:local_dm_1}
	\bigg(1-\frac{\epsilon}{2}\bigg)\sqrt{\delta(K)-\delta(L^\circ)}\leq \mathsf{h}_{L\cap G K_n}(x)\leq \bigg(1+\frac{\epsilon}{2}\bigg) \sqrt{\delta(K)-\delta(L^\circ)}.
	\end{align}
	Let $T_\epsilon$  be a minimal $(\epsilon/2)\sqrt{\delta(K)-\delta(L^\circ)}$ net of $S_k\cap \partial B_k$ under the Euclidean metric. Then an easy volume estimate shows that the log cardinality of $T_\epsilon$ can be bounded by
	\begin{align*}
	\log T_\epsilon\lesssim k \log \bigg(\frac{1}{\epsilon \sqrt{\delta(K)-\delta(L^\circ)}}\bigg)\lesssim_\tau k \log\bigg(\frac{1}{\epsilon^2\delta(K)}\bigg),
	\end{align*}
	where the last inequality follows from the assumption that $\delta(K)-\delta(L^\circ)\gtrsim_\tau \delta(K)\gtrsim 1$. 
	As $\mathsf{h}_{L\cap G K_n}$ is $1$-Lipschitz, we have
	\begin{align}\label{ineq:local_dm_2}
	\sup_{x \in S_k\cap \partial B_k}\inf_{x' \in T_\epsilon}\bigabs{\mathsf{h}_{L\cap G K_n}(x)-\mathsf{h}_{L\cap G K_n}(x') }\leq \frac{\epsilon}{2}\sqrt{\delta(K)-\delta(L^\circ)}.
	\end{align}
	Combining (\ref{ineq:local_dm_1}) and (\ref{ineq:local_dm_2}) using a union bound, we have 
	\begin{align*}
	(1-\epsilon)\sqrt{\delta(K)-\delta(L^\circ)}\leq \mathsf{h}_{L\cap G K_n}(x)\leq (1+\epsilon) \sqrt{\delta(K)-\delta(L^\circ)},\quad \forall x \in S_k\cap \partial B_k,
	\end{align*}
	with probability at least 
	\begin{align*}
	1- \exp\bigg[-c\epsilon^2\delta(K)+C k \log\bigg(\frac{1}{\epsilon^2\delta(K)}\bigg)\bigg]. 
	\end{align*}
	The claim follows from the assumption on the dimension $k$. 
\end{proof}

\section*{Acknowledgments}
The research of Q. Han is partially supported by NSF grants DMS-1916221 and DMS-2143468. The authors would like to thank two referees and an Associate Editor for numerous helpful comments and suggestions that significantly improved the quality of the paper.

\bibliographystyle{amsalpha}
\bibliography{mybib}

\end{document}